\newif\iffinal
\else\usepackage[notref,notcite]{showkeys}\fi
\tikzset{every path/.style={line width=.07 cm}}
\definecolor{darkblue}{rgb}{0,0,0.6}
\newenvironment{enumeratei}{\begin{enumerate}[\upshape (i)]}{\end{enumerate}}
\newenvironment{enumeratea}{\begin{enumerate}[\upshape (a)]}{\end{enumerate}}
\newenvironment{inparaenuma}{\begin{inparaenum}[\upshape \bfseries (a) ]}{\end{inparaenum}}
\numberwithin{equation}{section}
\numberwithin{figure}{section}
\numberwithin{table}{section}
\newtheorem{thm}{Theorem}[section]
\newtheorem{lem}[thm]{Lemma}
\newtheorem{theorem}[thm]{Theorem}
\newtheorem{prop}[thm]{Proposition}
\newtheorem{defn}[thm]{Definition}
\theoremstyle{definition}
\newtheorem{rem}{Remark}
\newcommand{\ind}{\mathds{1}}
\newcommand{\vep}{\varepsilon}
\newcommand{\set}[1]{\left\{#1\right\}}
\newcommand{\convas}{\stackrel{\mathrm{a.s.}}{\longrightarrow}}
\newcommand{\lab}{{\mathrm lab}}
\newcommand{\mycomment}[1]{}
\def\qed{ \hfill $\blacksquare$}
\newcommand{\cA}{\mathcal{A}}\newcommand{\cB}{\mathcal{B}}\newcommand{\cC}{\mathscr{C}}
\newcommand{\cE}{\mathcal{E}}
\newcommand{\cG}{\mathcal{G}}
\newcommand{\cL}{\mathcal{L}}
\newcommand{\cM}{\mathcal{M}}
\newcommand{\cS}{\mathcal{S}}\newcommand{\cT}{\mathcal{T}}
\newcommand{\vC}{\mathbf{C}}
\newcommand{\vx}{\boldsymbol{x}}
\newcommand{\mvxi}{\boldsymbol{\xi}}
\newcommand{\fB}{\mathfrak{B}}
\newcommand{\fP}{\mathfrak{P}}
\newcommand{\fU}{\mathfrak{U}}
\newcommand{\fV}{\mathfrak{V}}
\newcommand{\bL}{\mathbb{L}}
\newcommand{\bN}{\mathbb{N}}
\newcommand{\bR}{\mathbb{R}}
\newcommand{\bT}{\mathbb{T}}
\newcommand{\bZ}{\mathbb{Z}}
\newcommand{\dU}{\mathds{U}}
\DeclareMathOperator{\E}{\mathbb{E}}
\DeclareMathOperator{\pr}{\mathbb{P}}
 \DeclareMathOperator{\Poi}{Poisson}
\newcommand{\sss}{\scriptscriptstyle}
\newcommand{\erdos}{Erd\H{o}s-R\'enyi }
\newcommand{\ldown}{\ell^2_{\sss\searrow}}
\newcommand{\convd}{\stackrel{d}{\longrightarrow}}
\newcommand{\convp}{\stackrel{\sss \prob}{\longrightarrow}}
\newcommand{\op}{o_{\sss \mathrm{P}}}
\definecolor{aqua}{rgb}{0.0, 1.0, 1.0}
\definecolor{boo}{rgb}{1.0, 0.0, 1.0}
\definecolor{stred}{rgb}{1.0, 0.44, 0.37}
\newcommand{\Bin}{{\sf Bin} }
\newcommand{\MBRW}{{\sf MBRW}}
\newcommand{\prob}{\mathbb{P}}
\DeclareMathAlphabet\mathbfcal{OMS}{cmsy}{b}{n}
\def\beq{ \begin{equation} }
\def\eeq{ \end{equation} }
\newcommand{\chs}[1]{{{#1}}}
\newcommand{\indic}[1]{\mathbbm{1}_{\{#1\}}}
\newcommand{\eqn}[1]{\begin{equation}#1\end{equation}}
\newcommand{\eqan}[1]{\begin{align}#1\end{align}}
\newcommand{\nn}{\nonumber}
\newcommand{\rO}{{\mathrm{\scriptstyle{O}}}}
\newcommand{\rY}{{\mathrm{\scriptstyle{Y}}}}
\newcommand{\srO}{{\mathrm{\scriptscriptstyle{O}}}}
\newcommand{\srY}{{\mathrm{\scriptscriptstyle{Y}}}}
\newcommand{\e}{{\mathrm e}}
\newcommand{\expec}{\mathbb{E}}
\newcommand{\invisible}[1]{}
\newcommand{\Tree}{\textsf{T}}
\newcommand{\N}{\mathbb{N}}
\newcommand{\emp}{\varnothing}
\newcommand{\PPT}{\mathrm{PPT}}
\newcommand{\btr}{$\triangleright$}
\newcommand{\din}{d^{\sss(\mathrm{in})}}
\newcommand{\dif}{\mathrm{d}}
\newcommand{\clusterold}{\mathscr{C}}
\newcommand{\susceptibilitypi}[2]{s^\pi_{#2}(#1)}
\newcommand{\susceptibility}[2]{s_{#2}(#1)}
\newcommand{\csusceptibilitypi}[2]{S^\pi_{#2}(#1)}
\newcommand{\clusterpi}[2]{\mathscr{C}^\pi_{#1}(#2)}
\newcommand{\Gpi}[1]{G_{#1}^\pi}
\newenvironment{subproof}[1]{%
  \begin{proof}[Proof of part (#1)]%
}{%
  \end{proof}%
}
\DeclareSymbolFont{extraup}{U}{zavm}{m}{n}
\DeclareMathSymbol{\varheart}{\mathalpha}{extraup}{86}
\DeclareMathSymbol{\vardiamond}{\mathalpha}{extraup}{87}
\newcommand{\ensymboldefinition}{$\blacktriangleleft$}
\newcommand{\eps}{\varepsilon}
\begin{document}

\title[Subcritical percolation and coagulation]{Non-equilibrium coagulation processes and\\ subcritical percolation on evolving networks}

\date{}
\subjclass[2010]{Primary: 60K35, 05C80. }
\keywords{Coagulation processes, evolving networks, preferential attachment, percolation, stochastic approximation, susceptibility, local convergence}

 \author[Banerjee]{Sayan Banerjee$^1$}
\author[Bhamidi]{Shankar Bhamidi$^1$}
\address{$^1$Department of Statistics and Operations Research, 304 Hanes Hall, University of North Carolina, Chapel Hill, NC 27599}
\email{sayan@email.unc.edu; bhamidi@email.unc.edu; r.w.v.d.hofstad@tue.nl;\\
rounak\_ray@brown.edu}
\author[van der Hofstad]{\\Remco van der Hofstad$^2$}
\address{$^2$ Department of Mathematics and Computer Science, Eindhoven University of Technology, Netherlands}
\author[Ray]{Rounak Ray$^3$}
\address{$^3$Department of Applied Mathematics,  Brown University}

\begin{abstract}
Coagulation processes play an important role in an array of domains ranging from colloidal chemistry to combinatorial optimization. Models for such processes typically start with a large, but fixed, number of particles,  which then coalesce forming components whose subsequent sizes recursively drive the evolution of the system. 

The goal of this paper is to understand the asymptotics of coagulation dynamics mediated by {\em new particles} entering the system, and forging connections between existing components in the system. One major motivation is to understand percolation on growing networks where the evolution of connected components resembles a non-equilibrium version of the multiplicative coalescent. The supercritical $\pi> \pi_c$ regime for a host of such models was conjectured in statistical physics, and then rigorously proven in mathematics, to exhibit behavior similar to the BKT infinite-order phase transition as $\pi\searrow \pi_c$. It has further been conjectured that the entire regime $\pi<\pi_c$ for such growing networks are ``critical'' with power-law cluster size distributions having a non-universal exponent for all values of $\pi \in (0, \pi_c)$. 

In this paper, we study percolation on a specific model, namely, the uniform attachment model, as a concrete template in order to develop general tools based on stochastic approximation, local convergence, branching random walks and tree-graph inequalities to prove the above conjectured phenomena. For each $\pi \in (0,\pi_c)$, we show there exists an explicit $\alpha(\pi) \in (0,\tfrac{1}{2}) $ such that the maximal component size, as well as the size of the component containing any fixed vertex, all re-scaled by $n^{\alpha(\pi)}$, converge almost surely to strictly positive random variables as the network size $n \to \infty$.  The emergence of this phenomenon is driven by the delicate interplay between the rate of {\em new vertices} entering the system, creating their own empires, versus {\em mergers} of existing large components. These dynamics lead to novel phenomena, compared to classical \emph{static} models in which the number of vertices is fixed and vertex roles are exchangeable, including long-range dependence and fixation of the {\bf identity} of the maximal component, within finite time, among a finite number of `early' components. Moreover, in contrast with most static network models, we show that the \emph{susceptibility}, that is, the expected size of the component of a uniformly chosen vertex, remains bounded as the network grows and $\pi$ approaches $\pi_c$ from below. The general tools developed in this paper will be used in follow-up work to understand percolation for general growing network evolution models.

\end{abstract}

\maketitle

\tableofcontents

\section{Introduction}
\label{sec:int}
The study of percolation as a model of disordered media has played an important role in the development of modern discrete probability and its applications. See, e.g.,  \cite{Grim99}, as well as the references therein, for percolation and related interacting particle systems on lattices, and \cite{Boll01} for classical work in the mean-field setting.  The main goal of this paper is to understand large network asymptotics for percolation on random {\em dynamic} network models \cites{Boll01,Hofs24,Hofs17,Durr06,BolJanRio07}.  One major finding in this setting is the connection between {\bf dynamic formulations} of percolation on random graphs, and  application areas such as colloidal chemistry, in particular coagulation and fragmentation processes, see, e.g., \cites{Aldo97,Aldo99,Bert06}.
\smallskip

To fix ideas, consider the \erdos random graph $G_n(n, t/n)$ starting with vertex set $[n]:=\set{1,2,,\ldots, n}$, with each pair of vertices connected with probability $t/n$, independently across pairs. One can view the above as a graph-valued process by coupling all values of $t$ via generating uniform random variables $U_{i,j}$ for each $i\neq j\in [n]$, and retaining only those edges $\{i,j\}$ satisfying $U_{i,j}\leq t/n$, where we think of $t$ as corresponding to {\em time}. Then, in the genesis of the field of random graphs, Erd\H{o}s and R\'enyi \cite{ErdRen60} proved that the critical point is $t=1$: For $t> 1$, there is a unique component containing a positive proportion of the vertices in the network (often called the {\em giant component}), while, for $t< 1$, the size of the maximal connected component is of order $\log{n}$. At the critical value $t=1$, the size of the largest and second largest component are both order $n^{2/3}$, with {\em random variables} arising as multiplicative factors in the scaling limit.
\smallskip

The intricate evolution of such systems as they pass through the critical point $t=1$, and in fact the entire scaling window for times $t=1+\lambda n^{-1/3}$, was worked out in increasing detail in \cites{Lucz90a,JanKnuLucPit93} and culminating in \cite{Aldo97}.  Fix $\lambda \in \bR$, and write $\cC_{{\sss(i)}, n}(\lambda)$ for the $i^{\rm th}$ largest component in $G_n(n, n^{-1}(1+\lambda n^{-1/3}))$, breaking ties at random. Let $\vC_n(\lambda) = \big(n^{-2/3}|\cC_{{\sss(i)}, n}(\lambda)|\big)_{i\geq 1} $ denote the entire vector of component sizes (appending infinitely many zeros after the graph is exhausted), viewed as a random object in    
    \[
    \ldown = \set{\vx = (x_1, x_2, \ldots), x_1 \geq x_2 \geq \cdots, \sum_{i=1}^{\infty} x_i^2 < \infty}.
    \]
Then, Aldous  \cite{Aldo97} showed that, for each fixed $\lambda$, $\vC_n(\lambda) \convd \mvxi(\lambda)$, where $\mvxi(\lambda)$ is an infinite-dimensional random vector obtained from the ordered excursions of an associated stochastic process. Further, Aldous \cite{Aldo97} connected the critical regime to the large body of probabilistic work in colloidal chemistry, surveyed in \cite{Aldo99}. More precisely,  the component sizes $(\vC_n(\lambda))_{-\infty < \lambda < \infty}$, viewed as a process in $\lambda$ and interpreting $\lambda$ as time, converges on $\ldown$ to a Markov process which is now known as the {\bf standard multiplicative coalescent}, whose transitions are described as follows:
\begin{quote}
    \emph{Given a current configuration of component sizes $\vx = (x_1, x_2, \ldots)$, components $i,j$ merge at rate $x_i\cdot x_j$ leading to a new component of size $x_i+x_j$.}
\end{quote}

The multiplicative coalescent, as well as structural (topological) properties of the above limits of maximal components, have played a major role in proving {\bf universality} in the critical regime for a wide array of other random graph models,  as well as understanding key objects in combinatorial optimization such as strong disorder models of information diffusion and
the minimal spanning tree with random edge weights \cites{AddBroGol12,AddBroGolMie17,BhaBroSenWan14,BhaSen24,BasBhaBroSenWan25}. Let us now describe the main model considered in this paper, and then elaborate on its connections to the above discussion. 

\subsection{Evolving network models}
\label{sec-model}
We start by describing the main model considered in this paper and then describe the general model class that the techniques in this paper are geared to understand.
\subsubsection{Model in this paper: Uniform attachment}
Fix $m=2$. Let $(G_n)_{n\geq 1}$ be a sequence of networks grown using the uniform attachment schemes with $m$ out-edges. In more detail,  having constructed the network $G_n$,  a new vertex $n+1$ enters the system with $m$ edges which it uses to connect to the existing network, via sequentially connecting each edge to an existing vertex $v\in G_n$ uniformly at random. The main object of interest in this paper is {\em percolation} on the constructed network. Fix a parameter $\pi \in (0,1)$ and let $G^{\pi}_n$ denote the network obtained by independently retaining each edge with probability $\pi$, and deleting the edge with probability $1-\pi$. This model, and its brethren that we describe next, in the setting of percolation,  are all conjectured to belong to the same {\bf universality class}. 
\subsubsection{General framework}
\label{sec:model-gen}
The above model is the simplest example of the following general class of models.  
Fix weight parameters $a, \delta \in [0,\infty)$ and consider an attachment function $f\colon \bN_0 \to \bR_+$ given by
\eqn{
    \label{eqn:linear-att-def}
    f(x) = a x + \delta, \qquad x\in \bN_0.
    }
Let $(G_n)_{n\geq 1}$ be a sequence of networks grown using preferential attachment dynamics with the following variations:
\begin{enumeratea}
    \item {\bf Fixed out-degree preferential attachment:} Fix $m\geq 2$. Having constructed the network $G_n$, and writing $\deg(v,n)$ for the degree of an existing vertex $v$ in $G_n$,  a new vertex $n+1$ enters the system with $m$ edges which it uses to connect to the existing network, via sequentially connecting each edge to an existing vertex $v\in G_n$ in the network with probability proportional to $f(\deg(v,n))$, independently across edges.
    \item {\bf Bernoulli preferential attachment:} Fix $\beta >0$.  Conditionally on $G_n$, connect vertex $n+1$ to every vertex $v\in G_n$ with probability $\beta \cdot \frac{f(\deg(v,n))}{\sum_{u\in G_n}f(\deg(u,n))}\wedge 1$, independent across vertices. 
\end{enumeratea}
 Once again, the main objective is percolation on $G_n$, where each edge in $G_n$ is independently retained with probability $\pi$, and deleted with probability $1-\pi$, and we denote the percolated graph by $G_n^\pi$.   
\smallskip

The model in this paper is a special case of the above family with $m=2$, $a=0$ and arbitrary $\delta >0$. A {\bf natural question} is why focus on this specific case. As it turns out, this specific model {\bf provides the template} to develop general stochastic approximation tools required to understand percolation on all the general models of network evolution above, whilst using a less technically and notationally heavy model. 
The general set of tools developed in this paper are used in \cite{BanBhaHazHofRay26} to understand subcritical percolation on general uniform and preferential attachment models.  
\color{black}

\subsubsection{Dynamic construction of percolation and non-equilibrium coagulation processes}
In the interest of readability, we now specialize the discussion to the main model studied in this paper, i.e., the uniform attachment model with  $m=2$. The formulation in the previous section describes the construction of percolation on the dynamic graph sequence via a {\em static} process, in the sense that we first construct $G_n$ and then independently retain or delete edges with the prerequisite probability.  One can equivalently construct the percolation process {\em dynamically} as we grow the graph. We start with the discrete-time construction:

\begin{enumeratea}
    \item Having constructed the graph $G^{\pi}_n$ till step $n$,  a new vertex $n+1$ enters the system. This new vertex has three potential configurations:
    \begin{enumeratei}
        \item It has no edges with probability $(1-\pi)^2$.
        \item It has one edge with probability  $2\pi(1-\pi)$. 
        \item It has both edges with probability $\pi^2$. 
    \end{enumeratei}
    \item The vertex then selects, uniformly and independently for each retained edge, among the existing vertex set $[n]$, and connects to this vertex. Thus, for each edge, an existing connected component $\cC\subseteq G^{\pi}_n $ is selected with probability $|\cC|/n$, where $|\cC|$ denotes the number of vertices in $\cC$. 
\end{enumeratea}
Write $(G^{\pi}_n)_{n\geq 1}$ for this dynamic formulation in discrete time. Next, note that there is a natural continuous-time analog, where at each stage a new vertex enters the system at rate proportional to the number of individuals in the network, and then connects using the dynamics to the existing vertex set as per the rules above. With a slight abuse of notation, we also use $(G^\pi_t)_{t\geq 0}$ for the continuous-time analog.  In this continuous-time formulation, if $N(t)$ denotes the number of vertices in the system at time $t$, then note that the dynamics can be formulated as follows:

    \begin{enumeratea}
    \item {\bf Multiplicative-coalescent-type aggregation:} Two distinct components $\cC_a(t), \cC_b(t) \subseteq G^\pi_t$ merge into a single component of size $|\cC_a(t)| + |\cC_b(t)| +1$ at rate proportional to $2\pi^2|\cC_a(t)||\cC_b(t)|/N(t)$,  thus essentially replicating a version of the multiplicative coalescent dynamics described previously. 
    \item {\bf Linear aggregation:} A component grows as $|\cC_a(t)| \leadsto |\cC_a(t)| +1 $ at rate $\pi^2 |\cC_a(t)|^2/N(t) +2\pi(1-\pi) |\cC_a(t)|$.
    \item {\bf Immigration:} A new vertex, with no edges, immigrates into the system at rate $(1-\pi)^2 N(t)$. Till a given time $T$, each such vertex either seeds a new component, where it is the oldest vertex, that grows without merging with other components containing an older vertex till time $T$, or eventually merges with such a component by this time.  
\end{enumeratea}

Note that the number of individuals in the continuous time formulation,  $N(t) = |G^\pi_t|$, has the same distribution as a rate-one Yule process, which is a pure birth process growing at rate of the current population size. Further, we contrast the evolution of this process with the standard multiplicative coalescent dynamics described for the \erdos random graph. Whilst there are a number of similarities, we  note the following differences:
\begin{enumeratea}
    \item The (percolated) \erdos random graph (as well as more general models of stochastic coagulation surveyed in \cite{Aldo99}) consists of a large fixed number $n$ of vertices and all the vertices have an \emph{exchangeable role} in the resulting graph. The dynamic formulation of percolation on evolving networks results in the number of vertices increasing over time, where the role of each vertex in the network evolution crucially depends on its \emph{time of arrival}. Thus the percolated graphs form a {\em stochastic process} of graph-valued random variables of {\em growing} size. 
    \item The competition between the above multiplicative-coalescent-type dynamics and linear aggregation, which results in the increase of existing component sizes via either mergers or aggregation, is balanced by the immigration (in continuous time, in an exponential
    time scale) of new ``singleton'' vertices that seed potentially new ``empires'', and compete for growth with the existing components, or are eventually consumed by a previously seeded component. 
    \item The inherent growth rate of typical component sizes emerges from a delicate balance between the above two competitive factors. Further, in this setting,  our goal is to understand the prevalence of {\bf long-range dependence} on the evolution: Is it true that components of early vertices have overwhelming advantage so that subsequently seeded components can never overtake these component sizes? A qualitative version of this result is described in Theorem \ref{thm-max-comp}(b).    
\end{enumeratea}

\subsection{Main results}
\label{sec:res}
In this section, we state our main results.
Write $\clusterpi{v}{n}$ for the connected component of vertex $v$ in $G^{\pi}_n$, and $|\clusterpi{v}{n}|$ for its size.  In continuous time, we use the same notation replacing $n$ with $t$, and, when possible, we suppress the dependence on $\pi$.  
Define 
\begin{equation}
\label{pic-UM-m=2}
    \pi_c = \frac{2 - \sqrt{2}}{4} = \frac{1}{2(2 + \sqrt{2})}\approx 0.1464.
\end{equation}
Known results and proof techniques for percolation on uniform \cite{BolRio05} (using techniques developed for the uniformly grown random graph in \cite{BolJanRio05}), and preferential attachment models \cite{HazHofRay23} imply that the size of the maximal component $|\clusterold_{\max}^\pi(n)|$ satisfies         
    \eqn{
    \label{super-PA}
    |\clusterold_{\max}^\pi(n)|/n \convp \theta(\pi),
    }
where $\convp$ denotes convergence in probability, and $\theta(\pi)$ is strictly positive for $\pi > \pi_c$ and zero for $\pi \leq \pi_c$. 
The regime $\pi < \pi_c$ is the main goal of this paper.

\medskip

\paragraph{\bf Scaling limit for largest subcritical connected components.} For $\pi < \pi_c$, define 
\begin{equation}
\label{eqn:alpha-pi}
    \alpha(\pi) :=  \frac{1}{2} \big( 1 - \sqrt{8\pi^2 - 8\pi + 1} \big).
\end{equation}
\chs{Our first result shows almost sure convergence of the component sizes, as well as a form of {\bf persistence} of the maximal component showing that whp, the identity of the maximal component is likely to be one of the components containing one of the early vertices. This phenomenon suggests promising randomized algorithms for network archaeology which is discussed in the next section. To carefully define this phenomenon, we need some notation to associating components by the oldest vertex in it.   } 

Define, for all $v\in[n]$,
\begin{equation}
\label{eqn:cc-less-def}
\clusterpi{\sss \geq v}{n} =
\left\{
\begin{array}{ll}
\clusterpi{v}{n} & \text{if $v$ is the oldest vertex in the component,}  \\
\emptyset & \text{otherwise}.
\end{array}
\right.
\end{equation}

\begin{theorem}[Convergence of fixed-vertex and maximal component sizes]
\label{thm-max-comp}
Consider percolation on the uniform attachment model with $m=2$. Let $\pi<\pi_c$. 
\begin{itemize}
\item[(a)] For any fixed $i\in \N$, $n^{-\alpha(\pi)}|\clusterpi{i}{\pi}| \convas \zeta_i$  as $n\to\infty$, where  $\zeta_i >0$ almost surely and $\convas$ denotes convergence almost surely.
\item[(b)] $n^{-\alpha(\pi)} |\clusterpi{\max}{n}| \convas \max_{i \ge 1}\zeta_i$ as $n\to\infty$. Further, the maximal component is `weakly persistent' in the sense that, for any $\vep>0$, there exists $K(\vep)<\infty$ such that 
	\begin{equation*}
    	\prob\Big(\clusterpi{\max}{n} = \clusterpi{\sss \geq i}{n} \text{ for some } i> K(\vep) \text{ for infinitely many } n\Big) < \vep.
	\end{equation*}
\end{itemize}
\end{theorem}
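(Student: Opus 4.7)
My plan is to set up a stochastic approximation for the cluster-size process, tracking the susceptibility alongside as a coupled global observable. Enumerating the cases for the two edges of vertex $n{+}1$ (each retained with probability $\pi$ and, if retained, attached uniformly among the $n$ existing vertices), I obtain
\[
\E\big[|\clusterpi{i}{n+1}| - |\clusterpi{i}{n}| \,\big|\, \cF_n\big] \;=\; \frac{|\clusterpi{i}{n}|}{n}\Big[2\pi(1-\pi) + 2\pi^2\big(1+\chi_n^\pi\big)\Big] + O\!\Big(\frac{|\clusterpi{i}{n}|^2}{n^2}\Big),
\]
where $\chi_n^\pi = n^{-1}\sum_j|\clusterpi{j}{n}|^2$ is the susceptibility and the error becomes negligible once one knows a priori that $|\clusterpi{i}{n}| = o(n^{1/2})$. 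The drift couples the cluster-size dynamics to $\chi_n^\pi$, so I would run a separate stochastic approximation for $\sum_j|\clusterpi{j}{n}|^2$ to show $\chi_n^\pi \convas \chi_\infty < \infty$; self-consistency forces $\chi_\infty = (\alpha-\pi)/[\pi(1-\alpha)]$ (an expression encoding the age-cohort sum $\sum_{k\leq n} k^{-\alpha}\sim n^{1-\alpha}/(1-\alpha)$), which when inserted into $\alpha = 2\pi(1-\pi) + 2\pi^2(1+\chi_\infty)$ reduces to the quadratic identity $\alpha(1-\alpha) = 2\pi(1-\pi)$, whose unique root in $(0,\tfrac12)$ is exactly \eqref{eqn:alpha-pi}.

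For part (a), I would normalize by $a_n = \prod_{k=i_0}^{n-1}\bigl(1+\alpha(\pi)/k\bigr) \sim c\, n^{\alpha(\pi)}$ and set $M_i(n) = |\clusterpi{i}{n}|/a_n$. Once $\chi_n^\pi \to \chi_\infty$ is established, the recursion above shows $M_i$ is a non-negative approximate martingale whose perturbations are summable thanks to $\alpha(\pi) < 1/2$, so by Robbins--Siegmund (or direct Doob convergence) $M_i(n) \convas \zeta_i$ for a finite limit. Strict positivity $\zeta_i > 0$ a.s.\ requires an additional step: I would obtain $L^2$-boundedness of $M_i$ through a parallel stochastic approximation for $\E[|\clusterpi{i}{n}|^2]$, giving uniform integrability and hence $\E[\zeta_i] = \lim_n \E[M_i(n)] > 0$; the upgrade from positive expectation to a.s.\ positivity would be achieved by a Kesten--Stigum-style non-extinction argument, exploiting that $\clusterpi{i}{n}$ stochastically dominates a pure-birth (Yule) process with rate $2\pi(1-\pi) > 0$ that never goes extinct.

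For part (b), part (a) immediately yields $n^{-\alpha(\pi)}\max_{i\leq K}|\clusterpi{\sss\geq i}{n}| \convas \max_{i\leq K}\zeta_i$ for each fixed $K$. The main obstacle, and what truly distinguishes the growing-network setting from the static one, is the uniform tail estimate
\[
\lim_{K\to\infty}\limsup_{n\to\infty}\; n^{-\alpha(\pi)} \max_{i>K}|\clusterpi{\sss\geq i}{n}| \;=\; 0 \quad \text{almost surely.}
\]
My plan is to dominate each cohort-$i$ component by a branching random walk $Z^{(i)}$ born at time $i$ with Malthusian parameter $\alpha(\pi)$; since $\E[Z^{(i)}(n)] = O\bigl((n/i)^{\alpha(\pi)}\bigr)$, the rescaled quantity $n^{-\alpha(\pi)} Z^{(i)}(n)$ is of order $i^{-\alpha(\pi)}$ times a bounded random variable, and tree-graph (BK-type) inequalities supply the high-moment bounds on $\max_{i>K}$ that, combined with Borel--Cantelli, deliver the a.s.\ vanishing uniformly in late cohorts. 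With the tail bound in hand, $n^{-\alpha(\pi)}|\clusterpi{\max}{n}|\convas \max_{i\geq 1}\zeta_i$, and weak persistence follows: given $\vep > 0$, choose $K=K(\vep)$ so that on an event of probability at least $1-\vep$ one has $\sup_{n\geq n_0} n^{-\alpha(\pi)}\max_{i>K}|\clusterpi{\sss\geq i}{n}| < \max_{j\leq K}\zeta_j$ for some $n_0 < \infty$; no component indexed by a vertex $>K$ can then be maximal for any $n\geq n_0$, which is exactly the stated conclusion.
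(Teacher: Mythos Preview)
Your high-level architecture matches the paper's: couple the cluster dynamics to the susceptibility, show $\chi_n^\pi\to\chi_\infty$, obtain supermartingale convergence of $n^{-\alpha(\pi)}|\clusterpi{i}{n}|$, and for (b) control late cohorts uniformly. Your drift computation and the identity $\alpha(1-\alpha)=2\pi(1-\pi)$ are correct. But two of the steps you sketch would not go through as written.

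\textbf{Positivity of $\zeta_i$.} The Yule domination gives the wrong exponent: the single-edge attachment rate is $2\pi(1-\pi)$, which is strictly smaller than $\alpha(\pi)=2\pi+2\pi^2\susceptibilitypi{\infty}{2}$, so the lower bound is only of order $n^{2\pi(1-\pi)}=o(n^{\alpha(\pi)})$ and says nothing about $\zeta_i>0$. The $L^2$-boundedness route yields only $\E[\zeta_i]>0$, and there is no Kesten--Stigum theorem available here since the process is not a branching process. The paper's argument is quite different: it applies the generator to $|\clusterpi{1}{t}|^{-1}$ to produce a positive martingale $|\clusterpi{1}{t}|^{-1}\exp\big(\alpha(\pi)t-\int_0^t R_2(u)\,du\big)$, and the work is in showing $\int_0^\infty |R_2|<\infty$. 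That in turn requires a preliminary weak lower bound $|\clusterpi{1}{n}|\geq n^{(1-\delta)\alpha(\pi)}$ (via a $K$-truncated coupling and martingale concentration) together with an integrability estimate $\int_0^\infty \e^{-(\alpha(\pi)-\vep)t}\csusceptibilitypi{t}{3}\,dt<\infty$ for the third susceptibility.

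\textbf{Tail bound on late components.} Tree-graph (path-expansion) inequalities only give $\E\big[|\clusterpi{\sss\geq i}{n}|^k\big]\leq C_k(n/i)^{k\beta}$ for $\beta$ \emph{strictly} larger than $\alpha(\pi)$; the exponent comes from a Perron--Frobenius eigenvalue and the series diverges at $\beta=\alpha(\pi)$. Consequently the union bound over $i>K$ of $\mathbb{P}\big(n^{-\alpha(\pi)}|\clusterpi{\sss\geq i}{n}|>\eta\big)\leq C_k\eta^{-k}n^{k(\beta-\alpha(\pi))}i^{-k\beta}$ blows up with $n$, and Borel--Cantelli fails. The paper closes this $\vep$-gap dynamically: it takes the semi-martingale decomposition of $\e^{-k\Lambda(t)}|\clusterpi{\sss\geq i}{t}|^k$ with $\Lambda(t)=\int_0^t(2\pi^2\csusceptibilitypi{u}{2}+2\pi)\,du$, localizes by stopping times that enforce both the a priori growth bound and a cap on $\int \e^{-(\alpha-\vep)u}(1+\csusceptibilitypi{u}{3})\,du$, and bounds the drift and quadratic variation uniformly in $i$. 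The tree-graph moment bounds enter only as a priori input to control the stopping times, not as the final estimate. A smaller but related point: the convergence $\chi_n^\pi\convas\chi_\infty$ is itself not routine, since the drift $F(s)=2\pi^2 s^2+(4\pi-1)s+1$ is quadratic with two fixed points and the jumps are unbounded; the paper first proves convergence in probability via local limits and an a priori bound $|\clusterpi{\max}{n}|=o(n^{1/2})$, and only then upgrades to almost sure convergence by stochastic approximation.
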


Figure \ref{fig:kde-two-plots} below shows simulations for the maximal component size rescaled by $n^{\alpha(\pi)}$ for two different choices of $\pi = 0.08$ and $\pi= 0.12$, showing the lack of concentration of the maximal component after rescaling by $n^{\alpha(\pi)}$.

\begin{rem}[General $m$-out random graphs]
    The same set of results are true for general number of out-edges $m\geq 2$ with 
    \[\pi_c(m) = \frac{1}{2(m+\sqrt{m(m-1)})},\]
    \[\alpha(\pi)=2m\pi(1-\pi)/\big( 1+\sqrt{1-4m\pi(1-\pi)} \big),\]
    This setting, as well as the more general preferential attachment models described in Section \ref{sec:model-gen}, are studied in \cite{BanBhaHazHofRay26} using the techniques developed in this paper. 
\end{rem}

\begin{rem}[Novel universality class]
    There are two distinctions when the above phenomena are compared with subcritical percolation on static models such as the configuration model \cites{Jans08b, Jans09c, Foun07}: 
    \begin{inparaenuma}
        \item For static models, typically the size of maximal subcritical components is of the same order as the {\em maximal degree} when the maximal degree is large. Thus, for power-law configuration models with degree exponent $\tau$, these can be expected to be of order $n^{1/(\tau-1)}$, while for random graphs having less heavy-tailed degrees, the maximal components have size of order $n^{o(1)}$. Since $\alpha(\pi)\in (0,\tfrac{1}{2}),$ however, for percolation on the above dynamic uniform attachment graph, the subcritical components have size that grows as a {\em fixed positive power} of $n$ {\em even when the degrees are at most of order $\log{n}$}. Further, the exponent $\alpha(\pi) \uparrow \tfrac{1}{2}$ as $\pi \nearrow \pi_c$.  We will see that this difference in behavior is intimately tied to the graph dynamics, which is such that {\em larger connected components are more likely to merge}, thus creating a kind of preferential attachment effect at the level of component sizes; 
        \item For static models with heavy-tailed degree distributions, given any $\eps>0,$ there exists $\tilde K(\eps)$ such that the maximal component contains one of the $\tilde K(\eps)$ largest degree vertices with probability at least $(1-\eps)$ as $n\to\infty$.  For the uniform attachment model considered in this paper, the vertex with the maximal degree is ``far away'' from the initial vertices (see \cite{BanBha21}). It should be provable that this `degree non-persistence' makes the maximal degree vertex unlikely to be in the maximal component; we defer further fine-scaled  topological properties of the maximal component to future work, see Section \ref{sec-disc}.
    \end{inparaenuma}  
\end{rem}
\medskip

\begin{figure}[htbp]
  \centering
  \begin{minipage}[b]{0.49\textwidth}
    \centering
    \includegraphics[width=\linewidth]{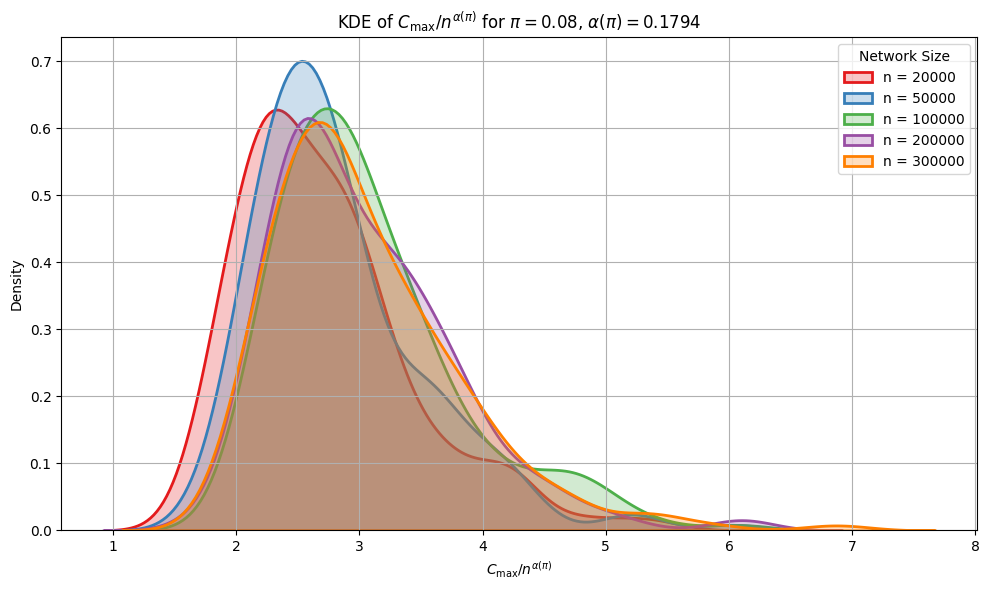}
    \\[0.5em]
    {\small $\pi = 0.08$, $\alpha(\pi) = 0.1794$}
  \end{minipage}
  \hfill
  \begin{minipage}[b]{0.49\textwidth}
    \centering
    \includegraphics[width=\linewidth]{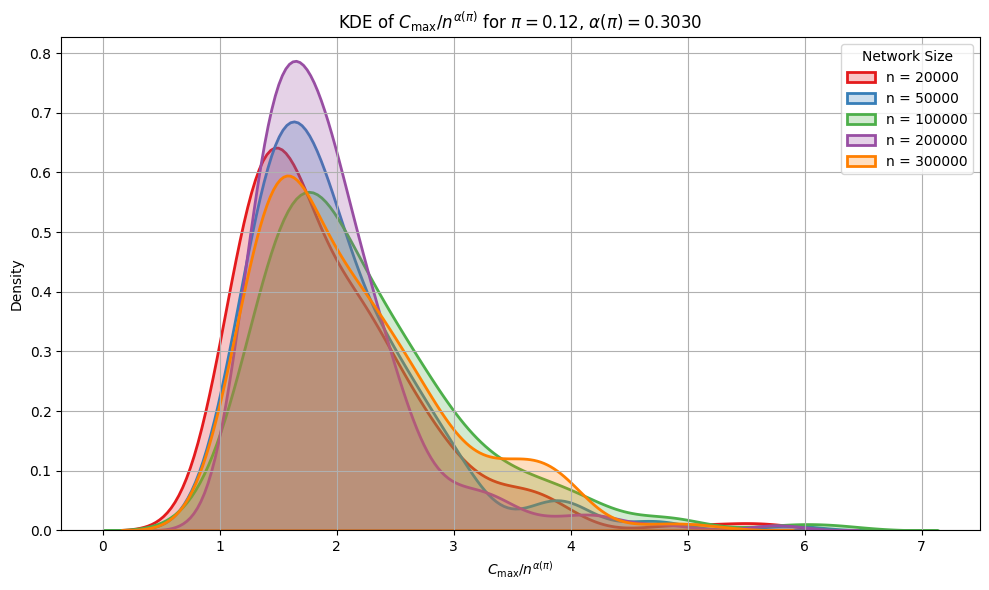}
    \\[0.5em]
    {\small $\pi = 0.12$, $\alpha(\pi) = 0.3030$}
  \end{minipage}
  \caption{Density plots of $|\clusterpi{\max}{n}|/n^{\alpha(\pi)}$ for two values of $\pi$. Each panel shows 200 trials per $n$, scaled by the predicted $n^{\alpha(\pi)}$, showing the non-constant nature of the limit.}
  \label{fig:kde-two-plots}
\end{figure}

\begin{figure}[htbp]
    \centering
    \begin{minipage}{0.48\textwidth}
        \centering
        \fbox{\includegraphics[width=\linewidth]{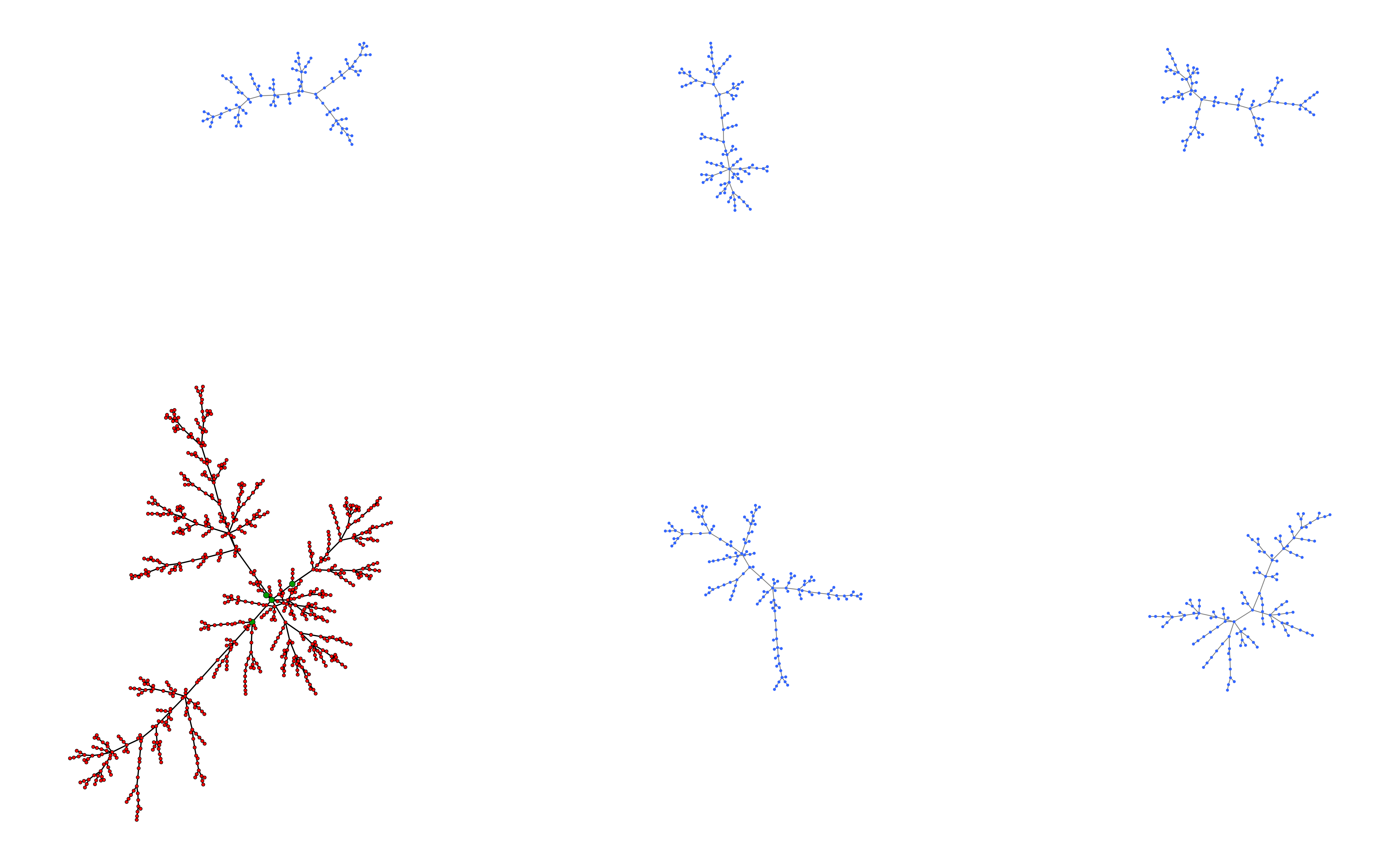}}
    \end{minipage}\hfill
    \begin{minipage}{0.48\textwidth}
        \centering
        \fbox{\includegraphics[width=\linewidth]{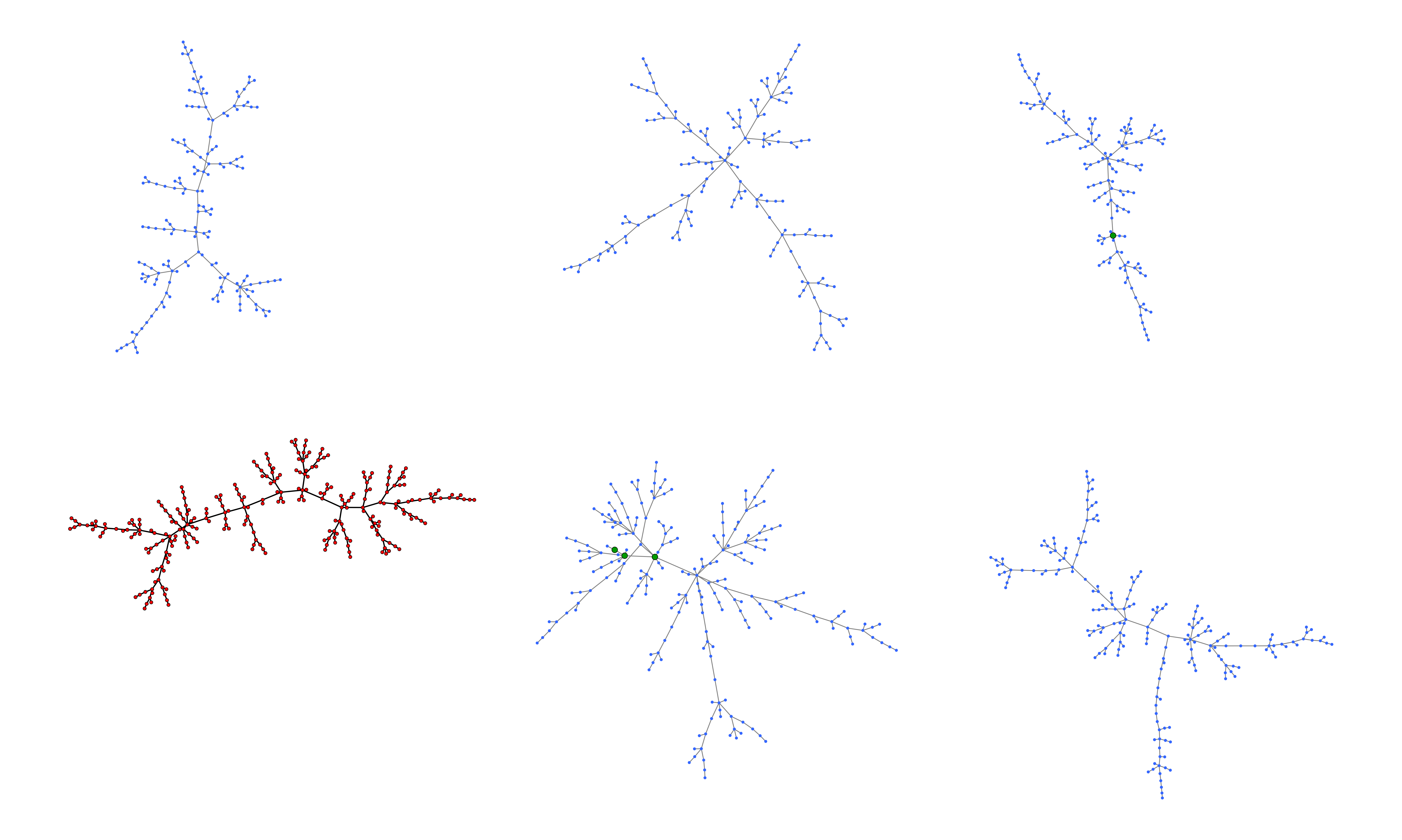}}
    \end{minipage}
    \caption{Both figures display the 6 largest components for $\pi=.14$ and the location of the first four vertices (in green) for $n=10^6$. In the left plot all of them are in the maximal component. In the right plot, 3 of the vertices appear in the second largest, and the fourth in the sixth largest, component.   }
    \label{fig:side_by_side}
\end{figure}

\paragraph{\bf Convergence of susceptibilities.} A crucial ingredient in our proof is the behavior of the {\em susceptibility}, which we define next.  For $n\geq 1$, write the collection of connected components in $G^{\pi}_n$ as 
    \eqn{
    \label{connected-component-vector}
    \vC(n) 
    = \set{\clusterold \subseteq G^{\pi}_n\colon  \clusterold \text{ connected component}}.
    }
Define the $k^{\rm th}$ susceptibility function in discrete time by
\begin{equation}
    \label{eqn:k-suscep}
    \susceptibilitypi{n}{k}=\frac{1}{n} \sum_{\clusterold\in\vC(n)} |\clusterold|^{k} =  \frac{1}{n} \sum_{v\in[n]} |\clusterpi{v}{n}|^{k-1} .
\end{equation}
From the above, $\susceptibilitypi{n}{2} = \E(|\clusterpi{V_n}{n}|\big| \vC(n))$ can be viewed as the {\em average component size} of a uniformly chosen vertex $V_n \in [n]$ in $G^{\pi}_n$, conditional on $\vC(n)$. Although this is a `local' quantity, our proofs will show that this quantity is the key to the asymptotics of `global' quantities, such as fixed-vertex and maximal component sizes. Thus, the second susceptibility $\susceptibilitypi{n}{2}$ will play a special role in the sequel. Our second main result describes its asymptotics for $\pi<\pi_c$:

\begin{theorem}[Convergence of susceptibility]
\label{thm-suscep-comp}
Consider percolation on the uniform attachment model with $m=2$. For any $\pi < \pi_c$, $\susceptibilitypi{n}{2}\convas \susceptibilitypi{\infty}{2}$, where
	\begin{equation}
    	\susceptibilitypi{\infty}{2} := \frac{(1 - 4\pi)-  \sqrt{8\pi^2 - 8\pi + 1}}{4\pi^2}.
	\label{eqn:s2-def}
	\end{equation}
\end{theorem}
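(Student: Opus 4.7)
The plan is to recognize $\susceptibilitypi{n}{2}$ as a stochastic approximation on $\Rbold_+$ with step size $1/(n+1)$, whose one-step drift is a quadratic in $\susceptibilitypi{n}{2}$ having the claimed limit $\susceptibilitypi{\infty}{2}$ as its smaller positive root. Conditioning on the transition from $G^\pi_n$ to $G^\pi_{n+1}$, splitting according to whether the incoming vertex retains $0$, $1$ or $2$ of its edges (probabilities $(1-\pi)^2,\,2\pi(1-\pi),\,\pi^2$), and further in the two-edge case according to whether the two endpoints lie in the same or in distinct components of $G^\pi_n$, a direct case analysis gives
\begin{equation*}
\E\!\left[(n+1)\susceptibilitypi{n+1}{2} - n\,\susceptibilitypi{n}{2} \,\Big|\, \cF_n\right]
\;=\; 1 + 4\pi\,\susceptibilitypi{n}{2} + 2\pi^{2}\bigl(\susceptibilitypi{n}{2}\bigr)^{2} \;-\; \frac{2\pi^{2}}{n}\bigl(\susceptibilitypi{n}{3} + \susceptibilitypi{n}{4}\bigr).
\end{equation*}
Subtracting $\susceptibilitypi{n}{2}$ and dividing by $n+1$ produces the stochastic approximation recursion
\begin{equation*}
\susceptibilitypi{n+1}{2} - \susceptibilitypi{n}{2}
\;=\; \frac{f\!\left(\susceptibilitypi{n}{2}\right) + \xi_{n+1}}{n+1}
      \;-\; \frac{2\pi^{2}\bigl(\susceptibilitypi{n}{3} + \susceptibilitypi{n}{4}\bigr)}{n(n+1)},
\qquad f(s) = 1 + (4\pi-1)s + 2\pi^{2}s^{2},
\end{equation*}
with $\xi_{n+1}$ an $\cF_{n+1}$-measurable martingale difference.

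The quadratic $f$ has two real roots, $s_{\pm} = \frac{(1-4\pi) \pm \sqrt{8\pi^{2}-8\pi+1}}{4\pi^{2}}$, precisely in the subcritical regime $\pi < \pi_{c}$ (the radicand vanishes at $\pi_c$), and both are positive. Because $f(0) = 1 > 0$ and the parabola opens upward, $f > 0$ on $(-\infty, s_{-})$ and $f < 0$ on $(s_{-}, s_{+})$. Since $\susceptibilitypi{1}{2} = 1 < s_{-}$, the deterministic limit ODE $\dot s = f(s)$ starting at $1$ is attracted to $s_{-} = \susceptibilitypi{\infty}{2}$, identifying the relevant equilibrium of the recursion, provided trajectories stay below $s_{+}$.

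It remains to control the martingale noise $\xi_{n+1}$ and the perturbation $(\susceptibilitypi{n}{3}+\susceptibilitypi{n}{4})/n$, after which a standard stochastic approximation convergence theorem (of Robbins--Siegmund type) delivers almost-sure convergence. For this, I would couple $|\clusterpi{v}{n}|$ with the total progeny of a subcritical branching random walk describing the local limit of the percolated uniform attachment graph around vertex $v$, and combine this with tree-graph inequalities to obtain moment bounds of the form
\begin{equation*}
\E\!\left[|\clusterpi{v}{n}|^{r}\right] \;\leq\; C_{r}(n/v)^{r\alpha(\pi)},
\qquad r \in \{3, 4\},\quad 1 \leq v \leq n,
\end{equation*}
throughout $\pi < \pi_{c}$. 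Averaging over $v$, using $2\alpha(\pi) < 1$ and $3\alpha(\pi) < 2$, then gives $\sup_n \E[\susceptibilitypi{n}{3}] < \infty$ and $\sum_n \E[\susceptibilitypi{n}{3} + \susceptibilitypi{n}{4}]/n^{2} < \infty$; the same estimates bound the conditional variance of $\xi_{n+1}$ and the jump size of $\susceptibilitypi{n}{2}$. Together with an almost-sure argument keeping $\susceptibilitypi{n}{2}$ below $s_{+}$, this places the recursion in the framework of perturbed stochastic approximation with summable noise, and yields $\susceptibilitypi{n}{2} \convas s_{-}$.

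\textbf{Main obstacle.} The principal difficulty is obtaining the moment estimates on $|\clusterpi{v}{n}|$ uniformly in $(v,n)$. There is an apparent circularity, because the growth exponent $\alpha(\pi)$ is itself tied to $\susceptibilitypi{\infty}{2}$ via the relation $\alpha(\pi) = 2\pi + 2\pi^{2}\,\susceptibilitypi{\infty}{2}$, which is precisely the quantity being computed. To break this, one would launch the stochastic approximation with a cruder BRW domination (whose rate depends only on $\pi$ and not on $s^{*}$), yielding rough-but-sufficient moment estimates, and then bootstrap once the first-order behavior of $\susceptibilitypi{n}{2}$ is pinned down. Long-range dependence stemming from the persistence of components containing early vertices, together with the need to handle macroscopic-merger contributions to the noise, is the main source of technical difficulty.
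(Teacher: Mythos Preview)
Your framework is essentially the paper's: the same quadratic drift $F(s)=2\pi^2s^2+(4\pi-1)s+1$, the same error term $-2\pi^2(\susceptibilitypi{n}{3}+\susceptibilitypi{n}{4})/n$, and the same use of tree-graph inequalities to control higher moments of component sizes. The paper also notes (Lemma~\ref{lem-higher-moment-bound}) that these moment bounds come with exponent $\beta>\alpha(\pi)$ rather than the sharp $\alpha(\pi)$, and this slack is harmless for the argument; so the circularity you flag as the ``main obstacle'' is in fact not the crux.

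The genuine gap is your localization step, which you dispose of in one line (``Together with an almost-sure argument keeping $\susceptibilitypi{n}{2}$ below $s_+$''). A Robbins--Siegmund theorem needs either a.s.\ boundedness of the trajectory or a Lyapunov control forcing it into a compact set; your expectation bounds $\sup_n\E[\susceptibilitypi{n}{3}]<\infty$ and summability of $\E[\susceptibilitypi{n}{4}]/n^2$ do not deliver this. Nothing in your outline rules out the scenario where a large merger pushes $\susceptibilitypi{n}{2}$ past $s_+$, after which the drift $F$ is positive and the process diverges. The paper handles this in two steps that are absent from your sketch. First, it proves $\susceptibilitypi{n}{2}\convp \susceptibilitypi{\infty}{2}$ \emph{independently} of the stochastic approximation, via local weak convergence and a uniform-integrability argument (Propositions~\ref{prop:expec-s2-convg} and~\ref{prop:s2-prob-convg}); this guarantees the process visits any neighborhood of $s_-$ infinitely often. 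Second, it upgrades the moment bounds to the \emph{almost-sure} statement $|\clusterpi{\max}{n}|\le n^{\alpha(\pi)+\vep}$ eventually (Proposition~\ref{prop:max-as-up-bound}), which makes the one-step jumps and the pathwise error $R_n$ small enough that a direct stopping-time argument (Proposition~\ref{prop-as-up-down}) traps the process near $s_-$ once it arrives there. Both ingredients---the convergence-in-probability input from local limits, and the a.s.\ maximal-component bound---are doing real work that your Robbins--Siegmund invocation does not cover.
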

\begin{rem}[Bounded critical susceptibility]
    Remarkably, the susceptibility $\susceptibilitypi{\infty}{2}$ in \eqref{eqn:s2-def} remains bounded {\em for all} $\pi<\pi_c$, with a finite limit as $\pi\nearrow \pi_c$. This is unlike related percolation settings, where the expected component size blows up as $\pi\nearrow \pi_c$ like $(\pi_c-\pi)^{-1}$ \cite{Jans09b}, and as such again is indicative of a different percolation universality class compared to those of other (static) random graph models. The $(\pi_c-\pi)^{-1}$ blow-up for the subcritical susceptibility in these static models can also be understood by noting that the local limit is a branching process (see, e.g., \cite{Hofs24}), for which the expected total progeny for percolation on it is $1/(1-\nu\pi)$, with $\nu$ the expected number of offspring, and for which $\pi_c=1/\nu$. For the uniform attachment model, however, the local limit is a multi-type branching process with continuum type space, for which the expected critical total progeny behaves rather differently.
\end{rem}

The scaling exponent $\alpha(\pi)$ of the maximal component in Theorem \ref{thm-max-comp} satisfies 
    \begin{equation}
    \label{eqn:alpha-pi-susc-rel}
    \alpha(\pi) = 2\pi^2 \susceptibilitypi{\infty}{2} +2 \pi.
    \end{equation}
The proof of Theorem \ref{thm-max-comp} will explain the origin of this relation.

\subsection{Contributions of this paper}
\label{sec:contr}
We briefly outline the main contributions of this paper, deferring an outline of the proof to Section \ref{sec-proof-outline}, and a detailed literature review to Section \ref{sec-disc}. 
\begin{enumeratea}
    \item {\bf Phase transitions for evolving networks.} Motivated by areas such as the early growth of the internet at the webpage level and duplication mutation models in protein interaction networks \cites{CalHopKleNewStr01,KimKraKahRed02,CouBau03}, \cite{DorGolMen08}*{Section III.F}, in the early 2000s there was a host of activity in the statistical physics community on the nature of the percolation phase transition for evolving networks (see Sec. \ref{sec:model-gen}), culminating in two major conjectures:
    \begin{enumeratei}
        \item {\bf Supercritical regime and the BKT-type phase transition.} It was conjectured that, owing to the interplay between the density of new connections and the rate of vertices entering the system,  the maximal component in these models should experience a Berezinskii-Kosterlitz-Thouless (BKT)-type infinite order phase transition as $\pi \searrow \pi_c$, with the maximal component scaling like $|\clusterpi{\max}{n}|/n \approx \exp(-C_{\tt model}/\sqrt{\pi-\pi_c})$. This conjecture was subsequently rigorously proven for many of the canonical evolving models including the CHKNS model \cites{Durr03,BolJanRio07}, the uniformly growing random graph model \cite{BolJanRio05}, the specific model in this paper \cite{BolRio05}, as well as a number of variants of the preferential attachment class of networks \cites{DerMor09,EckMorOrt18}. 
        \item {\bf Subcritical regime $\pi <\pi_c$.} For the subcritical regime where $\pi<\pi_c$, it was conjectured that this entire regime should be considered ``critical'' with the maximal component satisfying $|\clusterpi{\max}{n}| \approx n^{\alpha(\pi)}$ for a non-universal, $\pi-$ and model-dependent growth exponent $\alpha(\pi) < \tfrac{1}{2}$ \cite{KimKraKahRed02}. Till date, for the model considered in this paper, the best known results are that $|\clusterpi{\max}{n}|$ grows at most at rate $\sqrt{n\log{n}}$ (\cite{BolRio05}*{Theorem 2}), and, for a related model, that the expected component size of the first vertex $|\clusterpi{1}{n}|$ satisfies $\E[|\clusterpi{1}{n}|] = O(n^{\alpha({\pi})})$ \cite{BolJanRio05}. Proving this general conjecture has been challenging (see M\"orters and Schleicher \cite{MorSch25}, summarized in \eqref{subcritical-dynamic-IRG}, for recent progress ``on the log scale'' for a class of inhomogeneous random graphs believed to be in the same universality class as the uniform attachment model). This paper develops general techniques involving {\em stochastic approximation, local limit analysis, branching random walks and tree-graph inequalities} to prove the above conjectured phenomenon for the general class of evolving network models. 
    \end{enumeratei}
    \item {\bf Long-range dependence and network archaeology.} 
    One of the major motivations for studying evolving networks is the problem of reconstructing the temporal history of a network when only its final topology—namely, its adjacency matrix—is observed. This task, known as {\em network archaeology} \cite{NavKin11}, has recently inspired a burst of activity in the probability community, particularly in the setting where the underlying dynamics generate trees \cites{BubDevLug17,CraXu21,BanBha21,BanBha22,LugPer19}. In this regime, carefully designed centrality measures tailored to trees allow one to estimate the location of the root (vertex~1) within a fixed bounded radius with high accuracy.

Over the last two years, there has been substantial momentum toward extending these analyses beyond the tree case \cites{BriCalLug23,BriGirLugSul25}. In this direction, our \emph{weak persistence} result in Theorem~\ref{thm-max-comp}(b), which exhibits the inherent long-range dependence in such processes, naturally suggests the following randomized algorithm for estimating the root of a network from its adjacency matrix alone:

\begin{enumeratei}
\item Fix $\pi < \pi_c$. Independently retain each edge of the network with probability $\pi$ and delete it with probability $1-\pi$. By Theorem~\ref{thm-max-comp}(b), with high probability the root lies in one of the $K(\varepsilon)$ maximal components of the resulting subgraph. Hence the search for vertex~1 may be restricted to these components.
\item It should be relatively straightforward to show that, at a global scale, these maximal components are effectively tree-like, in the sense that cycles arise only during the very early stage of a component’s growth. Consequently, existing procedures for root estimation on trees should have a strong chance of identifying `confidence sets' for the early vertices in each component. The union of these candidate sets should then contain vertex~1 with high probability.
\end{enumeratei}

Given the length and scope of this paper, we defer a full development of this program to future work.
    
    \item \textbf{Stochastic approximations and the differential equation method.}
A standard workhorse in dynamic formulations within probabilistic combinatorics is the 
\emph{differential equation method} \cites{Worm95,Kurt70,Kurt71}, 
which allows one to show that macroscopic quantities---such as susceptibility functions---converge to 
deterministic, continuous limits governed by an ordinary differential equation (ODE). However, most 
classical applications begin with a system of large fixed size~$n$ and then track the evolution of 
associated functionals.

Transplanting these techniques directly to the setting of \emph{growing networks} is substantially more 
challenging: one must carefully disentangle the long-range dependence inherent in the evolution of 
network components, as well as the heavy-tailed nature of their asymptotics, before any macroscopic averaging 
can be meaningfully applied.

In this paper, we address these difficulties by combining \emph{local weak convergence} with 
\emph{branching-random-walk techniques} to first establish that various macroscopic functionals converge 
in probability to deterministic constants. We then apply refined \emph{stochastic-approximation} 
arguments to upgrade these results to almost sure convergence. As a consequence, we obtain almost-sure 
convergence of the appropriately rescaled component sizes.
\end{enumeratea}

\subsection{Proof outline}
\label{sec-proof-outline}
We next give an overview of the steps in the proof of Theorems \ref{thm-max-comp} and \ref{thm-suscep-comp}.
\smallskip

\paragraph{\bf Continuous-time embedding.} It will be convenient to use the continuous-time formulation $(G^\pi_t)_{t\geq 0}$. Let  $(|\clusterpi{1}{t}|)_{t\geq 0}$ denote the size process of the component containing vertex one. 
For continuous time $t\geq 0$, write the collection of components in $G^{\pi}_t$, similarly to \eqref{connected-component-vector}, as 
	\eqn{
	\label{clusters-UA-discrete}
	\vC(t) = \set{\clusterold \subseteq G^{\pi}_t\colon  \clusterold \text{ component}}.
	} 
Denote the number of vertices in the random graph at time $t$ by $N(t)$. As mentioned before, it is not hard to see that $t\mapsto N(t)$ is a rate-one {\em Yule process}, i.e., a pure-birth process with transitions $N(t)\leadsto N(t)+1$ at rate $N(t)$. Standard results, see, e.g., \cite{Norr98}, imply that $N(t)\e^{-t}\convas E$, with $E$ an exponential random variable with mean 1. Thus, $N(t)$ grows exponentially. Now, the evolution of the size of the component of a specific vertex, say vertex one, $|\clusterpi{1}{t}|$, is as follows:
\begin{itemize}
    \item[(a)] A new vertex enters the system with two edges and then merges another component $\clusterold\in \vC(t)$ with $\clusterpi{1}{t}$. Thus,  $\clusterold\neq \clusterpi{1}{t} $ merges with $\clusterpi{1}{t}$, leading to the transition $|\clusterpi{1}{t}|\leadsto |\clusterpi{1}{t}| + |\clusterold|+1$, at rate $2\pi^2|\clusterpi{1}{t}||\clusterold|/N(t).$
    \item[(b)] A new vertex enters the system with at least one edge, and all its edges connect to $\clusterpi{1}{t}$, leading to the transition $|\clusterpi{1}{t}| \leadsto |\clusterpi{1}{t}| + 1$, which occurs at rate $\pi^2 |\clusterpi{1}{t}|^2/N(t) + 2\pi(1-\pi)|\clusterpi{1}{t}|.$
    \item[(c)] A new vertex enters with no edges connected to it; this leads to no change in $|\clusterpi{1}{t}|$. 
\end{itemize}
For $k \ge 2$, define the $k^{\rm th}$ susceptibility in continuous time as
	\begin{equation}
	\label{susdef}
    	\csusceptibilitypi{t}{k} :=\frac{1}{N(t)}\sum_{\clusterold \in \vC(t)}|\clusterold|^{k}= \frac{1}{N(t)}\sum_{i\in[N(t)]}|\clusterpi{i}{t}|^{k-1}, \quad t \ge 0.
	\end{equation} 
Note that the discrete and continuous susceptibilities in \eqref{eqn:k-suscep} and \eqref{susdef}, respectively, are related through $\susceptibilitypi{N(t)}{k}=\csusceptibilitypi{t}{k}$. Denote the generator of the associated continuous-time Markov process by $\mathcal{L}$. Then
	\begin{align}
	\label{dynamics-connected-component-continuous}
    \mathcal{L} |\clusterpi{1}{t}| &= \sum_{\clusterold \in \vC(t)\colon \clusterold\neq \clusterpi{1}{t}}\frac{2\pi^2}{N(t)}|\clusterpi{1}{t}||\clusterold|(|\clusterold|+1) + \left(\pi^2 \frac{|\clusterpi{1}{t}|^2}{N(t)} + 2\pi(1-\pi)|\clusterpi{1}{t}|\right)\\
    &= 2\pi^2|\clusterpi{1}{t}| \left(\csusceptibilitypi{t}{2} - \frac{|\clusterpi{1}{t}|^2}{N(t)}\right) + 2\pi^2|\clusterpi{1}{t}|\left(1 - \frac{|\clusterpi{1}{t}|}{N(t)}\right)\nn\\
    &\qquad + \left(\pi^2 \frac{|\clusterpi{1}{t}|^2}{N(t)} + 2\pi(1-\pi)|\clusterpi{1}{t}|\right)\nn\\
    &\le \left(2\pi^2\csusceptibilitypi{t}{2} + 2\pi\right)|\clusterpi{1}{t}|.\nn
\end{align}
As a result, for $t\geq 0$, the process 
	\eqn{
	\label{super-martingale}
	M(t)=|\clusterpi{1}{t}| \exp\left(-\int_0^t [2\pi^2 \csusceptibilitypi{u}{2} + 2\pi]~ \dif u \right)
	}
is a non-negative super-martingale, and thus converges almost surely to a finite random variable $M(\infty)$. In order to gain insight into the asymptotics of $|\clusterpi{1}{t}|$, we thus need to understand the asymptotics of $t\mapsto \csusceptibilitypi{t}{2}$ as formulated in Theorem \ref{thm-suscep-comp}, as we explain next.
\medskip

\paragraph{\bf Convergence of components in Theorem \ref{thm-max-comp}: martingales.} Recall \eqref{super-martingale} and the key role played by the second susceptibility. If one can show $\susceptibilitypi{n}{2}\convas \susceptibilitypi{\infty}{2}$, equivalently for $\csusceptibilitypi{t}{k}=\susceptibilitypi{N(t)}{k}$, $\csusceptibilitypi{t}{k}\convas \susceptibilitypi{\infty}{2}$, then, for large $t$,
	\[
	\int_0^t [2\pi^2 \csusceptibilitypi{u}{2} + 2\pi]~ \dif u
	=t(2\pi^2\susceptibilitypi{\infty}{2}+2\pi)+o_{\pr}(t)=t \alpha(\pi) +o_{\pr}(t),
	\]
by \eqref{eqn:s2-def} and \eqref{eqn:alpha-pi}, and where we write $X_n=\op(Y_n)$ for random variables $X_n$ and $Y_n$ when $X_n/Y_n\convp 0$. As a result, $|\clusterpi{1}{t}|=\e^{t\alpha(\pi)+o_{\pr}(t)}$. Together with $|\clusterpi{1}{t}|$ in continuous time being equal to $|\clusterpi{1}{N(t)}|$ in discrete time, and the fact that $N(t)=\e^{t+O_{\pr}(1)}$, this explain why $|\clusterpi{1}{n}|=n^{\alpha(\pi)+o_{\pr}(1)}$. 
\smallskip

To upgrade this to almost sure convergence, i.e., to $|\clusterpi{1}{n}|n^{-\alpha(\pi)} \convas \zeta_1$, we strengthen the results to show that $n^{\gamma}(\susceptibilitypi{n}{2}-\susceptibilitypi{\infty}{2})\convas 0$ for some small $\gamma>0$. In turn, this implies that $\int_0^t (\csusceptibilitypi{u}{2}-\susceptibilitypi{\infty}{2})~ \dif u$ converges almost surely as well. To prove that $\zeta_1>0$ almost surely, instead, we will define a martingale $M^{\sss(-1)}(t)=|\clusterpi{1}{t}|^{-1} \exp\left(\int_0^t [2\pi^2 \csusceptibilitypi{u}{2} + 2\pi+R(u)]~ \dif u \right)$, for some appropriate error term $u\mapsto R(u)$. Since the non-negative martingale $M^{\sss(-1)}(t)$ converges almost surely, $\prob(M^{\sss(-1)}(\infty)=\infty)=0$, from which it follows after some work that also $\prob(M(\infty)=0)=0$.
\medskip

To prove the asymptotics of the maximal component and the weak persistence phenomenon, the crucial estimate is provided in Section \ref{sec-latecomplem}, where it is shown that the appropriately normalized `late' component sizes, namely, components where the oldest vertex has label $> M$ for some large enough $M$, cannot be large. This involves tracking the evolution of high powers of these component sizes which is performed via a semi-martingale decomposition and carefully controlling the drift and quadratic variation. `A priori' bounds on the maximal component size of the form $O(n^{\alpha(\pi) + \vep})$, for small $\vep>0$, play a crucial technical role in setting up appropriate stopping times required to understand the growth of late components. 
\medskip

\paragraph{\bf Convergence of susceptibilities in Theorem \ref{thm-suscep-comp}: stochastic approximations.}
The previous discussion explains the key role played by the second susceptibility. The evolution of the process $(\susceptibilitypi{n}{2})_{n\geq 1}$ in discrete time can be written as 
        \begin{equation}
        \label{SAscheme-rep0}
            \susceptibilitypi{n+1}{2} - \susceptibilitypi{n}{2} = \frac{1}{n+1}\left[F(\susceptibilitypi{n}{2}) + R_n + \xi_{n+1} \right], \qquad n\geq 1,
        \end{equation}
where the function $F$ equals $F(s) = 2\pi^2 s^2+ (4\pi-1)s+1$, the error term $R_n$ satisfies $|R_n| \leq K \susceptibilitypi{n}{4}/n$ for a positive constant $K$, and the martingale differences  $(\xi_{n+1})_{n\geq 1}$ given by $\xi_{n+1}=(n+1)\big(\susceptibilitypi{n+1}{2} - \expec[\susceptibilitypi{n+1}{2}\,|\,G^{\pi}_n]\big)$ satisfy $\expec[\xi_{n+1}^2\,|\,G^{\pi}_n]\leq K (\susceptibilitypi{n}{3})^2$.
\smallskip

The stochastic recursion in \eqref{SAscheme-rep0} follows by an analysis similar to the one in \eqref{dynamics-connected-component-continuous}, applied to the sum of squares of component sizes rather than the component size of a single vertex. For this, we add vertices one by one, and inspect the evolution of the sum of squares of the component sizes. When two edges are added, they merge components $\clusterold$ and $\clusterold'$ to become a component of size $|\clusterold|+|\clusterold'|+1$ with probability $2|\clusterold||\clusterold'|/n^2$. Summing out over all pairs of components $\clusterold, \clusterold'$ explains why the evolution is {\em quadratic}. The precise shape of $s\mapsto F(s)$ follows by carefully analyzing all arising terms.
\smallskip

To analyze the asymptotics of the stochastic evolution of $\susceptibilitypi{n}{2}$, we note that we can write
	\[
	F(s)=b(s-\lambda_1)(s-\lambda_2),
	\]
where $b=2\pi^2$, while
    \[
    \lambda_1=\frac{1 - 4\pi-  \sqrt{8\pi^2 - 8\pi + 1}}{4\pi^2}=\susceptibilitypi{\infty}{2},\qquad \text{and}
    \qquad \lambda_2=\frac{1 - 4\pi+  \sqrt{8\pi^2 - 8\pi + 1}}{4\pi^2}>\lambda_1.
    \]
The values $\lambda_1$ and $\lambda_2$ correspond to the {\em fixed points} of the dynamics in \eqref{SAscheme-rep0}. Since $F'(s)=2bs-b(\lambda_1+\lambda_2)$, we have that $F'(\lambda_1)<0$ and $F'(\lambda_2)>0$, so that $\lambda_1$ is the stable fixed point, while $\lambda_2$ is unstable. Intuitively, we obtain two scenarios. Either the dynamics drives $\susceptibilitypi{n}{2}$ to the stable fixed point $\lambda_1=\susceptibilitypi{\infty}{2},$ which explains Theorem \ref{thm-suscep-comp}, or the dynamics lands in the `unstable' region ($s> \lambda_2$) and the value of $\susceptibilitypi{n}{2}$ diverges. We show that the second scenario cannot occur.
\smallskip

The proof follows by a careful analysis of the evolution, using as a starting point that $\susceptibilitypi{n}{2}\convp \susceptibilitypi{\infty}{2}$ by a local convergence argument, combined with a uniform integrability argument that is adapted from \cite{Hofs24}*{Chapter 8}. This argument identifies $\susceptibilitypi{\infty}{2}$ also as the expected component size of the root of percolation on the uniform attachment version of the P\'olya point tree, which is the local limit of our uniform attachment model. Since $\susceptibilitypi{n}{2}\convp \susceptibilitypi{\infty}{2}$, for every $\eta>0$, $n\mapsto \susceptibilitypi{n}{2}$ must enter the $\eta$-neighborhood of $\susceptibilitypi{\infty}{2}$ infinitely often. A path-wise analysis using the stochastic approximation scheme \eqref{SAscheme-rep0} shows that, each time this happens, the evolution has a {\em strictly positive probability} of never leaving this $\eta$-neighborhood, so eventually it has to succeed. Since $\eta>0$ is arbitrary, this completes the proof of Theorem \ref{thm-suscep-comp}.

An essential ingredient in the proof is a preliminary estimate that shows that $|\clusterpi{\max}{n}|/\sqrt{n}\convas 0$. Such a bound is crucial, since it shows that the process $n\mapsto \susceptibilitypi{n}{2}$ makes small steps, so that it cannot jump far when it is close to $\susceptibilitypi{\infty}{2}$.

\medskip
\paragraph{\bf Organization of this paper.} After discussing our results and related literature in Section \ref{sec-disc}, the subsequent sections implement each step of the above program, as follows. In Section \ref{sec:local-limit}, we identify the local limit in probability of our percolated uniform attachment model. In Section \ref{sec-weak-bounds-components}, we derive weak upper bounds on component sizes that imply that the second susceptibility makes small steps. In Section \ref{sec-conv-second-susc-stoch-approx}, we prove the convergence of the second susceptibility in Theorem \ref{thm-suscep-comp}, by relying on a careful stochastic approximation analysis. In Section \ref{sec:proof-first-comp}, we complete the proof of the asymptotics for the first component in Theorem \ref{thm-max-comp}(a). In Section \ref{sec-max-component}, we identify the asymptotics of the maximal component and prove Theorem \ref{thm-max-comp}(b).

\subsection{Discussion}
\label{sec-disc}
In this section, we discuss related literature and place our results in the context of the broader area of evolving networks. 
\medskip

\paragraph{\bf New universality class} Our main results pertain to the subcritical phase of percolation on uniform attachment models with $m=2$ out-edges. Our results show that percolation on such graphs is in an entirely different universality class compared to various other canonical models, such as rank-1 inhomogeneous random graphs and configuration models. Such random graphs are in the \erdos{} universality class unless the inhomogeneity, present due to the variability of the vertex degrees or weights, is very strong. Typically in such settings, the susceptibility blows up as $\pi\nearrow \pi_c$, while for our model, it remains finite {\em even when approaching the critical value}. Further, for rank-1 models, the barely subcritical connected components have sizes that can be much larger than $\sqrt{n}$, and the scale of the strictly subcritical components, arising for $\pi<\pi_c,$ is {\em independent of $\pi$,} unlike in our model, where these sizes scale with $n$ as $n^{\alpha(\pi)}$, with $\pi\mapsto \alpha(\pi)$ increasing from $\alpha(0)=0$ to $\alpha(\pi_c)=\tfrac{1}{2}$. 
\medskip

\paragraph{\bf Related literature}

\begin{enumeratea}
    \item {\bf Conjectured universality:} As described in Section \ref{sec:model-gen}, there is a host of models formulated in both the uniform attachment family \cites{BolJanRio05, Durr03,Rior05}, as well as more general degree-driven attachment schemes.  We expect the subcritical results in this paper to cover this broad family of models, albeit with substantial technical work (ongoing \cite{BanBhaHazHofRay26}). We present several tools—e.g., Proposition \ref{prop-as-up-down}—in a form more general than needed here, which avoids repeating delicate stochastic-approximation arguments. Nonetheless, extending the analysis to preferential attachment is highly challenging, since the attachment depends on the {\em total degree of components} rather than component size, so our stochastic-approximation and martingale arguments must track component weights instead of sizes. Converting weights back to sizes entails studying multiple susceptibilities, including sums of squared weights and mixed weight and size related susceptibilities.
    \item {\bf Related work in the subcritical regime:} M\"orters and Schleicher \cite{MorSch25} investigate a class of dynamic inhomogeneous random graphs, in which an edge is independently present between vertices $u$ and $v$ with probability $p_{uv}=\beta (u\wedge v)^{\gamma-1}(u\vee v)^{-\gamma}$ for some $\gamma\in [0,1]$. Such models are called {\em inhomogeneous random graphs of preferential attachment type}, since also for preferential attachment models as in \eqref{eqn:linear-att-def} with $a=1$ and $\delta>-m$, the probability that an edge is present between vertices $u$ and $v$ is of the order  $p_{uv}$ for $\gamma =(m+\delta)/(2m+\delta)$ and an appropriate $\beta>0$ (see \cite[Chapter 8]{Hofs24}). Previous work \cite{DerMor13}, shows that $\beta_c=(\tfrac{1}{4}-\tfrac{\gamma}{2})\wedge 0$ for this model (i.e., for $\beta > \beta_c$ there exists a giant component). Thus, $\beta_c>0$ when $\gamma<\tfrac{1}{2}$, which in preferential attachment models would correspond to $\delta>0$.  M\"orters and Schleicher \cite{MorSch25} show that the maximal component $\clusterold_{\max}(n)$ satisfies 
    \eqn{
    \label{subcritical-dynamic-IRG}
    \log{(|\clusterold_{\max}(n)|)}/\log{n}\convas \tfrac{1}{2}-\sqrt{(\tfrac{1}{2}-\gamma)^2-\beta(1-2\gamma)}.
    }
This result can be contrasted with the result in this paper with the exponent of $n$ similar to $\alpha(\pi)$ in \eqref{eqn:alpha-pi}, and with $\beta$ in the model of M\"orters and Schleicher \cite{MorSch25} playing the role of $\pi$. In particular, the quantities in the square roots in \eqref{subcritical-dynamic-IRG} and \eqref{eqn:alpha-pi} equal zero precisely when the model is {\em critical}.  The proof techniques in \cite{MorSch25} are completely different, using connections to branching random walks, while  the main thrust of this paper is directly leveraging the intrinsic dynamics of the underlying model. We refer to \cite{BolJanRio07}*{Section 16} for an extensive discussion of such infinite-order phase transitions in the context of inhomogeneous random graphs with independent edges.

    \item {\bf BKT-type phase transitions and the supercritical regime:} Compared to the subcritical regime, there is significantly more work for these models in the supercritical regime. For the specific model in this paper, it was proved by Bollob\'as and Riordan \cite{BolRio05} (see Riordan \cite{Rior05} for the sharpest results) that the giant is small, in that the proportion of vertices in it is approximately $\e^{-\Theta(1)/\sqrt{\pi-\pi_c}}$, where $\pi$ denotes the percolation parameter, thus this model exhibits an {\em infinite order} phase transition. For specific case of percolation on the linear preferential attachment with affine term $\delta=0$, similar results were proven by  Riordan \cite{Rior05}. 
Barely supercritical regime of the Bernoulli preferential attachment model was studied by Eckhoff, M\"orters and Ortgiese \cite{EckMorOrt18}, following the establishment of a phase transition by Dereich and M\"orters in \cites{DerMor09, DerMor13}.
See also \cites{Durr03, BolJanRio05} for a related model, which is sometimes called the CHKNS model after \cite{CalHopKleNewStr01}.
\end{enumeratea}

\smallskip

\paragraph{\bf Work in progress} In addition to the developing the universality program described above for understanding the subcritical regime for evolving network models, we are currently exploring 
the following related questions: 
\begin{enumeratei}
    \item {\bf  Power-law exponent of the component size of a typical vertex:}  Related literature, e.g. \cite{Durr03}*{Theorem 7} suggests that the probability that the connected component of a uniform vertex has size at least $k$ decays as $k^{-1/\alpha(\pi)}$. Such results provide additional evidence that such connected components are always {\em critical}, with sizes depending sensitively on the percolation parameter $\pi$. This is in stark contrast to classical percolation, e.g.\ on $\bZ^d$, or static random graphs, where the connected component size distribution only has power-law tails at, or very near, criticality (see, e.g., \cite[Chapter 9]{Grim99}).
    \item {\bf Structure of maximal components and network archaeology:} For $\pi<\pi_c$, one could expect the connected components to have a {\em bounded} number of cycles. Indeed, for small $n$, the probability that a vertex enters with 2 percolated edges that {\em both} connect to the component of vertex 1 is strictly positive. However, as $n$ grows large, this probability decays as $(|\clusterpi{1}{n}|/n)^2=n^{2\alpha(\pi)-2},$ where the exponent $2\alpha(\pi)-2$ is strictly smaller than $-1$. Thus, an almost-surely {\em finite} number of vertices connect to $\clusterpi{1}{n}$ with two edges, which indicates that the number of cycles is almost surely bounded. This raises the question what the scaling limit is of such subcritical components. Further, as described in Section \ref{sec:contr}, this property directly lead to randomized algorithms for estimating the location of the initial vertex set given the topology only of the final network,  using classical tree based algorithms. 
    \item {\bf Near-critical behavior:} For the inhomogeneous random graph model \cite{MorSch25} described in \eqref{subcritical-dynamic-IRG}, work in progress by Jorritsma, Maillard and M\"orters shows that this model has an intricate near-critical behavior when $\pi = \pi_c + \beta/(\log{n})^2$ for $\beta \in \bR$. This would also be the final frontier for the class of models considered in this paper. 
\end{enumeratei}

\smallskip

\color{black}

\medskip

\paragraph{\bf Notation.} Throughout this paper, we abbreviate independent and identically distributed to iid, and left- and right-hand side to lhs and rhs, respectively. We further write $E\sim {\rm Exp}(\lambda)$ to indicate that $E$ has an exponential distribution with parameter $\lambda$, $X\sim{\rm Bin}(n,p)$ to that $X$ has a binomial distribution with parameters $n\geq 1$ and $p\in (0,1)$, and $I\sim {\rm Ber}(p)$ that $I$ has a Bernoulli distribution with parameter $p$. We further write $x\vee y=\max\{x,y\}$ and $x\wedge y=\min\{x,y\}$.

\section{Local limits and the second susceptibility}
\label{sec:local-limit}

This section describes the local weak limit of the percolation component of a randomly selected vertex in the uniform attachment model and shows that for $\pi < \pi_c$, the size of this component is uniformly integrable sequence.  The main result,  Proposition \ref{prop:expec-s2-convg} below, further identifies $\susceptibilitypi{\infty}{2}$, defined in \eqref{eqn:s2-def}, as the expected component size of the root in the $\pi$-percolated local limit. We first need a definition:

\begin{defn}[{\bf Multi-type branching random walk killed at a random barrier}]
\label{def:mbrw}
    Fix type space $\cS = \{\rO, \rY\}\times \bR$, where the labels $\rO, \rY$ are mnemonics for ``old'' and ``young'', respectively, and the second coordinate represents the {\em locations} of the particles. Consider the following multi-type branching random walk on $\bR$, started with one individual $\varnothing$ at the origin. Write $\pr_{\srO}$ (respectively $\pr_{\srY}$) for the probability measure where the root has label $\rO$ (respectively, $\rY$) and location at the origin. Similarly, write $\E_{\srO}, \E_{\srY}$ for the corresponding expectation operations. Let $\cA$ denote an exponential random variable with mean 1 independent of the dynamics of the branching random walk defined as follows:
\begin{enumeratei}
    \item Vertices of both types give birth to type $\rY$ vertices with locations prescribed by a Poisson process with rate $2\pi$ translated from the position of the parent. More precisely,  the child of a vertex at location $a$, born at time $t$ in the Poisson point process, is positioned at location $a+t$.
    \item A type $\rO$ vertex at location $a$ gives birth to $\text{Bin}(2, \pi)$ vertices of type $\rO$. A type $\rY$ vertex gives birth to a $\text{Ber}(\pi)$ number of vertices of type $\rO$. Conditionally on the location $a$ of the vertex, each of these label $\rO$ vertices is at location $a-\hat E$, where $\hat E$ is an exponential random variable with mean 1, independent of all other randomness.
    \item Any particle whose location exceeds $\cA$ is killed. 
\end{enumeratei}
Write $\MBRW$ for this process.  Write $\cT_{\varnothing}$ for the genealogical tree of the above process viewed as a marked tree, where the type of each vertex $v\in \MBRW$ is represented by the corresponding element in $\cS= \{\rO, \rY\}\times \bR$, i.e., its location and whether it is young or old. 
\end{defn}

\begin{rem}[Moving the random barrier to zero]
\label{rem-barrier-zero}
    One can equivalently construct the above branching random walk via translating the location of all particles by $-\cA$, so that the root starts at location $-\cA$ and a killing barrier at zero (see \cites{DerMor09,DerMor13} for a related model). The above formulation is slightly more convenient for the proof of Proposition \ref{prop:expec-s2-convg}(b) below. 
\end{rem}

\begin{rem}[Two-type killed branching random walk representation of general local limits]
\label{rem-two-type-killed-BRW}
\normalfont
For the general preferential attachment model with $a>0$ and $m\geq 2$ in \eqref{eqn:linear-att-def}, a similar description of the local limit follows. This is due to the fact that the older children in the tree (of which can be $m$ or $m-1$)
are located at $U^\alpha$ times the birth time of their parent in the tree, independently, where $\alpha=(2m+\delta)/(m+\delta)$. After taking a logarithm, these particles will be $\alpha \log(U)$ away from the age of their branching process parents. Finally, also the Poisson process of younger children is multiplicative, and the logarithmic transformation turns that into an addition. For the offspring operator, this is explained in \cite[Section 4.1]{HofZhu25}. A similar analysis can be done for the offspring distribution; see the discussion around \eqref{for:pointgraph:poisson-logarithmic} in Appendix \ref{sec-local-limit-UA}, where the logarithmic transformation is worked out in detail. Call the logarithm of the birth time of a particle its {\em location}. Then, in Appendix \ref{sec-local-limit-UA}, we prove that, after this transformation, the local limit of the general preferential and uniform attachment models are two-type killed branching random walks, where particles are killed when their location is above zero.
\hfill\ensymboldefinition
\end{rem}

We will sometimes refer to $\rO$ and $\rY$ as the {\bf relative types} of the vertices for reasons that will become clear below.   Let $\bT_{\cS}$ denote the set of all rooted trees with vertices marked by $\cS$. Consider the component of a randomly selected vertex $\clusterpi{o_n}{n},$ and view this as an element of $\bT_{\cS}$ via the following {\bf exploration} procedure:
\begin{enumeratei}
    \item The root $o_n$ has type $(\rO, 0)$. 
    \item Represent any neighbors $v\in [o_n-1]$ of $o_n$ in $\clusterpi{o_n}{n}$ as $(\rO, \log{v}- \log{o_n})$. Represent any neighbors $v\in[o_n+1,n]$ of $o_n$ in $\clusterpi{o_n}{n}$ by $(\rY, \log{v}- \log{o_n})$. Thus, the ``location'' of every vertex is $\log{v}- \log{o_n}$. 
    \item Explore the component $\clusterpi{o_n}{n}$ in a breadth-first manner. At each step, the vertices in the latest generation of the explored component are queried in increasing order of their locations and yet unexplored subsequent vertices, arising as their neighbors, are revealed. For any subsequent vertex $v_s$, its location is given by $\log{v_s}- \log{o_n}$. Its \emph{relative type} is $\rY$ if the edge connecting it to the currently explored component is incident on a vertex $v_e$ at an earlier location (that is, $\log{v_s}- \log{v_e}>0$), otherwise it is $\rO$.
\end{enumeratei}
We will call the above the {\em genealogical tree} of $\clusterpi{o_n}{n}$. To reduce terminology, we continue to use $\clusterpi{o_n}{n}$ for this object. 
\begin{prop}[Local convergence of connected components]
\label{prop:expec-s2-convg}
Let $\clusterpi{\sss\varnothing}{\infty}$ be the distribution of the genealogical tree of $\cT_{\varnothing}$ under the distribution $\pr_{\srO}$, so that  the root has label $\rO$. The following properties hold: 
\begin{enumeratea}
    \item The genealogical tree $\clusterpi{o_n}{n}$ of the component of a randomly selected vertex $o_n$ converges in the local weak convergence sense in probability to $\clusterpi{\sss\varnothing}{\infty}$. 
    \item For $\pi< \pi_c$, the expected size of the limit component size satisfies $\expec[|\clusterpi{\sss\varnothing}{\infty}|]=\susceptibilitypi{\infty}{2}$, where $\susceptibilitypi{\infty}{2}$ is defined in \eqref{eqn:s2-def}.
    \item $\limsup_{n\to\infty} \expec[\susceptibilitypi{n}{2}] \leq \expec[|\clusterpi{\sss\varnothing}{\infty}|]$. In particular, 
 $\expec[\susceptibilitypi{n}{2}] \to \susceptibilitypi{\infty}{2}$ as $n\to\infty$.  
\end{enumeratea}
\end{prop}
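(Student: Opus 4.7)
For part (a), the plan is a breadth-first exploration of $\clusterpi{o_n}{n}$ starting from the uniformly chosen root $o_n$, verifying that at every step the law of newly revealed neighbors converges to the MBRW offspring law of Definition~\ref{def:mbrw}. The three ingredients are: (i) $-\log(o_n/n)\convd\cA\sim{\rm Exp}(1)$, which supplies the random barrier; (ii) the two older neighbors of any already-explored vertex $v$ are uniform on $[v-1]$, so after the log transform their positions relative to $v$ are iid copies of $-\hat{E}$ with $\hat{E}\sim{\rm Exp}(1)$, and independent percolation produces a ${\rm Bin}(2,\pi)$ number of such old children for a type $\rO$ vertex (or ${\rm Ber}(\pi)$ for a type $\rY$ one, whose explorer already accounts for one old edge); (iii) younger neighbors of $v$ are vertices $w>v$ that each chose $v$ with probability $2/(w-1)$ and are retained with probability $\pi$, so that a Poisson approximation in the log scale produces a rate-$2\pi$ Poisson process, truncated at $\cA$ in the limit. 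A factorial-moment computation upgrades pointwise convergence to convergence of the labelled rooted neighborhood in probability (the ``in probability'' coming from the extra average over $o_n$).

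For part (b), I shift the barrier to $0$ and the root to $x=-\cA$ as in Remark~\ref{rem-barrier-zero}. Conditioning on the first generation gives the integral equations
\begin{align*}
g_\rO(x) &= 1 + 2\pi\int_0^\infty g_\rO(x-s)\,e^{-s}\,ds + 2\pi\int_0^{-x}g_\rY(x+t)\,dt,\\
g_\rY(x) &= 1 + \pi\int_0^\infty g_\rO(x-s)\,e^{-s}\,ds + 2\pi\int_0^{-x}g_\rY(x+t)\,dt,
\end{align*}
valid for $x\leq 0$, where $g_T(x)$ is the mean total population when the root has type $T\in\{\rO,\rY\}$ at position $x$. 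The exponential-plus-constant ansatz $g_\rO(x)=C_1 e^{-\alpha x}+C_2$, $g_\rY(x)=D_1 e^{-\alpha x}+D_2$ forces $C_2=D_2=0$ (to cancel constant and linear-in-$(-x)$ pieces) and imposes the consistency $\alpha^2-\alpha+2\pi(1-\pi)=0$, whose root in $(0,\tfrac{1}{2})$ is $\alpha(\pi)$ of \eqref{eqn:alpha-pi}. Matching constants then yields $C_1=(1-\alpha)/(1-\alpha-2\pi)$, so averaging against $\cA\sim{\rm Exp}(1)$ gives $\expec[|\clusterpi{\sss\varnothing}{\infty}|]=C_1/(1-\alpha)=1/(1-\alpha-2\pi)$. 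A short rationalisation using $(1-4\pi)^2-(8\pi^2-8\pi+1)=8\pi^2$ identifies this with $\susceptibilitypi{\infty}{2}$ of \eqref{eqn:s2-def}, and is consistent with the identity \eqref{eqn:alpha-pi-susc-rel}.

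For part (c), since $\expec[\susceptibilitypi{n}{2}]=\expec[|\clusterpi{o_n}{n}|]=\sum_{k\geq 1}\pr(|\clusterpi{o_n}{n}|\geq k)$, part~(a) and lower semicontinuity of cluster size already give $\liminf_n\expec[|\clusterpi{o_n}{n}|]\geq \expec[|\clusterpi{\sss\varnothing}{\infty}|]$ via Fatou, so only the $\limsup$ bound is nontrivial. The plan is to produce a summable envelope $f(k)$ with $\pr(|\clusterpi{o_n}{n}|\geq k)\leq f(k)$ uniformly in $n$, after which dominated convergence and part~(a) deliver the $\limsup$ inequality. To this end I would stochastically dominate the BFS exploration of $\clusterpi{o_n}{n}$ layer by layer by the MBRW offspring (the finite-graph BFS can only ``lose'' candidate neighbors to previously explored vertices, and percolation is preserved in the coupling), so that $|\clusterpi{o_n}{n}|$ is dominated by the total MBRW population whose mean is finite by part~(b), yielding $f(k):=\pr(|\clusterpi{\sss\varnothing}{\infty}|\geq k)$ as a summable dominator. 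Combined with the Fatou lower bound and part~(b), this delivers the ``in particular'' conclusion $\expec[\susceptibilitypi{n}{2}]\to\susceptibilitypi{\infty}{2}$.

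The main obstacle, I expect, is the stochastic domination in part (c). In the UA dynamics, conditioning on $v$ having been discovered as a young neighbor of its explorer subtly biases the remaining edge environment at $v$, and while this bias vanishes in the local limit, bounding it uniformly in $n$ and in exploration depth requires a more delicate coupling than in static rank-1 models, likely along the lines of the moment analyses of \cite{Hofs24}*{Chapter~8} adapted to growing vertex sets.
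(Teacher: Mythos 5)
Your plan for part (a) matches the paper's: a BFS exploration with Poisson approximations verifying the offspring law converges to that of the MBRW, which the paper carries out in Appendix \ref{sec-local-limit-UA}.

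For part (b) you take a genuinely different, and correct, route. The paper decomposes the MBRW along the $\rO$-skeleton: it exploits the memoryless property of the exponential barrier to show that each vertex on the $\rO$-skeleton sprouts a ${\rm Geom}(1/(2\pi+1))$-type young contribution, yielding the algebraic system $x_{\srO}=(1+2\pi x_{\srO})(1+2\pi x_{\srY})$, $x_{\srY}=(1+\pi x_{\srO})(1+2\pi x_{\srY})$. You instead write renewal-type integral equations for $g_{\rO}(x),g_{\rY}(x)$ as functions of the root's location and solve them by exponential ansatz; I have checked the computation, and $1/(1-\alpha(\pi)-2\pi)$ indeed rationalises to $\susceptibilitypi{\infty}{2}$. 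Your derivation has the bonus that the characteristic equation $\alpha^2-\alpha+2\pi(1-\pi)=0$ and hence $\alpha(\pi)$ drop out in passing; the paper's recursion is more elementary, avoids justifying uniqueness of the exponential-plus-constant solution, and avoids the integral-equation machinery entirely.

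For part (c) there is a genuine gap, and you correctly sense where it is. The stochastic domination of $|\clusterpi{o_n}{n}|$ by $|\clusterpi{\sss\varnothing}{\infty}|$ does not hold in the form you propose, and the obstruction is not the conditioning bias you mention but the direction of the coupling for old-children positions. For a vertex $v$, the old neighbor's index $V$ is uniform on $[v-1]$, so $V/v$ is stochastically \emph{smaller} than the MBRW's birth-time ratio $\e^{-\hat{E}}$ (e.g.\ $\P(V\leq v/2)=\lfloor v/2\rfloor/(v-1)>1/2$ for $v$ even). This places the finite-graph old children \emph{further} from the killing barrier, giving them \emph{larger} subtrees in expectation — the wrong direction. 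This effect competes against self-avoidance and the slightly smaller finite-$n$ barrier, and there is no clean inequality. The paper circumvents coupling entirely: it expands $\E[\susceptibilitypi{n}{2}]=\E[|\clusterpi{o_n}{n}|]$ via a union bound over edge-labelled self-avoiding paths, factorizes the retention probabilities as $\prod_e\pi/(\bar e\vee\underline e)$, replaces the resulting Riemann sum by an integral using the deterministic inequality $1/\lceil a\vee b\rceil\leq 1/(a\vee b)$, and identifies that integral with $\E[|\clusterpi{\sss\varnothing}{\infty}|]$ via Lemma~\ref{lem:size-identity}. This gives the $n$-uniform bound $\E[\susceptibilitypi{n}{2}]\leq\E[|\clusterpi{\sss\varnothing}{\infty}|]$ at the level of expectations, not tails — weaker than a pointwise envelope but sufficient — and combined with your Fatou lower bound it yields the claimed convergence. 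If you want to rescue your plan, you would need to bound the non-self-avoiding ``free tree'' generated by the finite-$n$ mechanism rather than couple to the MBRW directly; but the expected-size bound for that free tree is precisely the paper's path-counting argument.
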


\begin{proof} We prove the different parts one by one.
    \begin{subproof}{a} This part (even for general $m\geq 2$ for the uniform attachment),  is proven in Appendix \ref{sec-local-limit-UA}. An alternative proof is given by Riordan in \cite{Rior05}. For a general overview of various notions of local weak convergence, see \cite{Hofs24}*{Chapter 2}.  One implication of this result is the following Lemma. See Lemma \cite{Hofs24}*{Chapter 5, Lemma 5.25} for a proof in the general setting of not just uniform attachment but preferential attachment. Define the constants,  $c_{\srY \srO}=1,$ while $c_{st}=2$ for all other choices of $s,t\in \{\rO,\rY\}.$ Next define the kernel $\kappa: (0,1]\times \cS\to \bR_+$ via, 
 \eqn{
    \label{def-kappa-UA}
        \kappa((x,s),(y,t))=\frac{c_{st}}{x\vee y}~,
    }
    \begin{lem}[Expectation of percolated connected component local limit]
    \label{lem:size-identity}
        For $\pi\in (0,1)$, the size of the local weak limit of the percolation cluster $|\clusterpi{\sss\varnothing}{\infty}|$ satisfies 
\[\E[|\clusterpi{\sss\varnothing}{\infty}|] = 1+ \sum\limits_{\ell=1}^{\infty} \pi^\ell \int\limits_0^1 \cdots \int\limits_0^1
        \sum\limits_{t_0, \ldots, t_\ell \in\{\srO,\srY\}}\prod\limits_{i=1}^\ell\kappa \Big( \big(x_{i-1},t_{i-1} \big),\big(x_{i},t_i\big)  \Big)\prod\limits_{i=0}^\ell \,dx_i ~.\]        
    \end{lem}
    The proof of this lemma can be found in the appendix. The proof of (c) provides an inkling of the origin of the kernel $\kappa$ in \eqref{def-kappa-UA}. 
\end{subproof}

\begin{subproof}{b}
Recall that $\cT_{\varnothing}$ denotes the geneological tree of killed multi-type branching random walk $\MBRW$ as in Definition \ref{def:mbrw}. Write 
$$x_{\srO} := \mathbb{E}_{\srO}[|\cT_{\varnothing}|] =  \mathbb{E}[|\clusterpi{\sss\varnothing}{\infty}|], \qquad x_{\srY} := \mathbb{E}_{\srY}[|\cT_{\varnothing}|] .$$ 
We will use the branching structure in the evolution of the process to obtain recursive relationships between $x_{\srO}, x_{\srY}$ whose solution completes the proof.  Let $\fP \sim \text{Bin}(2, \pi)$ denote the number of children of label $\rO$ in the tree.
Write these as $\{V_1, \dots, V_{\fP}\}$. Each such $V_i$ has age $-E_i$ compared to their parent in the tree, where $(E_i)_{i\in[\fP]}$ are iid exponential random variables with mean 1.

For each $i\in[\fP]$, let $S_i(-\infty, 0]$ denote the set of vertices emerging from $V_i$ of label $\rO$ and $\rY$ whose location is non-positive. Then,
\[
\mathbb{E} \left[ |S_i(-\infty, 0]| \mid \fP \right] = x_{\srO} \quad \forall i\in[\fP].
\]
Recall that $\cA \sim {\mathrm Exp}(1)$ denotes the killing barrier.  Further, we say that $u$ is a {\em descendant} of $v$ in the tree when the path from the root to $u$ passes through $v$.  We use the above to derive a recursive relation based on the observation that
$\cT_{\varnothing}$ comprises of  
$\{\varnothing\} \cup \left(\bigcup_{i=1}^{\fP} S_i(-\infty, 0] \right),$
along with all vertices with location $ \le \cA$ that are descendants of the children in the tree of every vertex in $\{\varnothing\} \cup \left(\bigcup_{i=1}^{\fP} S_i(-\infty, 0] \right)$ whose location is greater than $0$. These children  must all have label $\rY$, because all the $\rO$ descendants are already in $\bigcup_{i=1}^{\fP} S_i(-\infty, 0]$. Note that
\begin{equation}
    \label{eqn:rt1}
    \mathbb{E} \left[ \left| \{\varnothing\} \cup \left(\bigcup_{i=1}^{\fP} S_i(-\infty, 0] \right) \right| \right] = 
    1+\expec[\fP]\expec[S_1(-\infty, 0]\mid \fP]=1 + 2\pi x_{\srO}.
\end{equation}

Further,  for any vertex $V \in \{\varnothing\} \cup \left(\bigcup_{i=1}^{\fP} S_i(-\infty, 0] \right)$, all its $\rY$ children in the tree are born at epochs of a Poisson process (PP) with rate $2\pi$. Let 
    \eqn{
    \label{intensity-Y-children-UA}
    t_1, t_2, \ldots \sim PP(2\pi)
    }
be the birth times of its children of positive age (which must have label $\rY$). By the memoryless property of the exponential distribution, for any $i \in \mathbb{N}$, $x > 0$,

\[
\pr(\cA - t_i \geq x \mid  t_i \leq \cA) = \e^{-x}.
\]

Let $v_i$ denote the vertex born at time $t_i$. We say that $u$ is a {\em positive $\fP$ descendant} of $v_i$ if $u$ is a descendant of $v_i$, and the path connecting $u$ to $v_i$ does not pass through any vertex in $\{\varnothing\} \cup \left(\bigcup_{i=1}^{\fP} S_i(-\infty, 0] \right)$. Define
\begin{equation}
    T_i =\left\{
    u \in \cT_{\varnothing}\colon  u \text{ is a positive $\fP$ descendant of } v_i 
    \right\}.
\end{equation}
Hence, on the event $\{ t_i \leq \mathcal{A} \}$, the conditional expectation of $|T_i|$ given the history up till time $t_i$ (all vertices born before this time along with their connectivity structure) equals $x_{\srY}$, i.e., $\expec[|T_i|\mid t_i\leq \mathcal{A}]=x_{\srY}$. This follows from the memoryless property of $\mathcal{A}$: Conditionally on the event $\mathcal{A} \ge t_i$, the random variable $\mathcal{A} - t_i$, which is the `evolution time' of $T_i$ from the birth of its ancestor $v_i$ at time $t_i$ to $\mathcal{A}$, is an exponential random variable with mean one.

From standard facts about exponential random variables, $\prob(\mathcal{A} \ge t_1) = 2\pi/(2\pi + 1)$ and, for any $i \ge 2$,
$$
\prob(\mathcal{A} \ge t_i \mid \mathcal{A} \ge t_{i-1}) = \frac{2\pi}{2\pi + 1}.
$$
Combining the above observations, we obtain
\eqn{
\label{expec-particles-Y}
\mathbb{E} \left[ \sum_{i=1}^{\infty} \indic{t_i \leq \mathcal{A}} |T_i| \right]
= \sum_{i=1}^{\infty} \prob(t_i \leq \mathcal{A}) x_{\srY}
= \sum_{i=1}^{\infty} \left(\frac{2\pi}{2\pi+1}\right)^i x_{\srY} = 2\pi x_{\srY}.
}
As \eqref{expec-particles-Y} holds for any vertex $V \in \{\varnothing\} \cup \left(\bigcup_{i=1}^{\fP} S_i(-\infty, 0] \right)$, the contribution to the size of $\cT_{\varnothing}$ by $V$ and its positive $\fP$ descendants is $1 + 2\pi x_{\srY}$.

Combining this with \eqref{eqn:rt1}, we obtain
\begin{equation}
    \label{eqn:rt-2}
    x_{\srO} = \mathbb{E}[|\clusterpi{\sss\varnothing}{\infty}|] = (1 + 2\pi x_{\srO}) (1 + 2\pi x_{\srY}). 
\end{equation}
By a similar argument, if the root label is $\rY$, then it has $\fP\sim \text{Ber}(\pi)$ children of label $\rO$. Now, using the same description, with $S_i(-\infty, 0]$ defined above and this new $\fP$, for the descendants of the root and its label $\rO$ children, we have
\eqn{
x_{\srY} = (1 + \pi x_{\srO}) (1 + 2\pi x_{\srY}). 
}
Combining the above two identities gives 
\[
\frac{x_{\srO}}{x_{\srY}} = \frac{1 + 2\pi x_{\srO}}{1 + \pi x_{\srO}}, \]
which implies that
    \[x_{\srY} = \frac{(1 + \pi x_{\srO}) x_{\srO}}{1 + 2\pi x_{\srO}}.
    \]
Substituting this into \eqref{eqn:rt-2} gives 
\[
x_{\srO} = (1 + 2\pi x_{\srO}) \left( 1 + \frac{2\pi x_{\srO} (1 + \pi x_{\srO})}{1 + 2\pi x_{\srO}} \right).
\]

Solving for $x_{\srO}$ gives
\[
x_{\srO} = \frac{1 - 4\pi \pm \sqrt{8\pi^2 - 8\pi + 1}}{4\pi^2}.
\]
Since $\pi\mapsto x_{\srO}=x_{\srO}(\pi)$ is continuous for $\pi < {\pi}_c$ and satisfies $\lim_{\pi \searrow 0}x_{\srO}(\pi) = 1$, the only feasible solution is
\[
\mathbb{E} [|\clusterpi{\sss\varnothing}{\infty}|]  = \frac{1 - 4\pi - \sqrt{8\pi^2 - 8\pi + 1}}{4\pi^2},
\]
which agrees with $\susceptibilitypi{\infty}{2}$ in \eqref{eqn:s2-def}.
\end{subproof}

\begin{subproof}{c}
By definition,
    \eqan{\label{for:s2:UB-1}
    \expec\big[ \susceptibilitypi{n}{2} \big] = \expec [|\clusterpi{o_n}{n}|],
    }
    where $o_n$ is a uniformly chosen vertex in $G^{\pi}_n$.
    Note that whenever a vertex \(u\) is in \( \clusterpi{o_n}{n} \), there exists a path from \(o_n\) to \(u\) in \(G^{\pi}_n\), and vice-versa. By a union bound,
    \eqn{\label{for:s2:UB-2}
        \expec\big[ \susceptibilitypi{n}{2} \big] \leq  \frac{1}{n}\sum\limits_{u,v\in[n]}\sum_{{\bf p}_\ell(v,u)} \prob ({\bf p}_\ell(v,u) \subseteq G^{\pi}_n),
    }
    where the sum over ${\bf p}_\ell(v,u)$ is over all {\em edge-labeled} self-avoiding paths connecting $v$ and $u$, with the (distinct) labels on the edges indicating the edge label that made the connection, which is an element in $\{1,2\}$.
    Below, we will consistently write ${\bf p}_\ell(v,u)$ for an edge-labeled self-avoiding path, and ${\bf p}(v,u)$ for an unlabeled self-avoiding path. We write that $e\in {\bf p}(v,u)$ when the path ${\bf p}(v,u)$ contains the edge $e$. For an edge $e=\{u_{i-1}, u_i\}$ in a path \( {\bf p}(v,u)=\{ v=u_0, u_1,\cdots,u_{\sss d(u,v)}=u \} \), where $d(u,v)$ denotes the path length, we write $\bar{e}=u_{i}, \, \underline{e}=u_{i-1}$. 
    We note that, by the independence of the edge connections,
        \eqn{\label{for:s2:UB-2a}
        \prob ({\bf p}_\ell(v,u) \subseteq G^{\pi}_n)
        =\prod_{e\in {\bf p}_\ell(v,u)} \frac{\pi}{(\bar{e}\vee \underline{e})},
        }
    which is {\em independent} of the actual edge labeling. Thus,
        \eqn{\label{for:s2:UB-2b}
        \expec\big[ \susceptibilitypi{n}{2} \big] \leq  \frac{1}{n}\sum\limits_{u,v\in[n]}\sum_{{\bf p}(v,u)} N({\bf p}(v,u))\prod_{e\in {\bf p}(v,u)} \frac{\pi}{(\bar{e}\vee \underline{e})},
    }
where, for an unlabeled path ${\bf p}(v,u)$, we let $N({\bf p}(v,u))$ denote the total number of edge-labeled paths consistent with ${\bf p}(v,u)$.
  \smallskip
    
We next compute $N({\bf p}(v,u))$. Given a path \( {\bf p}(v,u)=\{ v=u_0, u_1,\cdots,u_{\sss d(u,v)}=u \} \), define the labels of the nodes in \({\bf p}(v,u)\) as
        \[
            \lab_{\bf{p}}(u_i)=\begin{cases}
            \rO \quad \text{if} \quad u_i<u_{i-1};\\
            \rY \quad \text{if} \quad u_i>u_{i-1},
            \end{cases}
        \]
where, by convention, \(\lab_{\bf p}(u_0)=\rO\). Recall the constants defined above Lemma \ref{lem:size-identity}, $c_{\srY \srO}=1,$ while $c_{st}=2$ for all other choices of $s,t\in \{\rO,\rY\}.$ Then,
    \eqn{
    \label{label-counting-path}
    N({\bf p}(v,u))=\prod_{e\in {\bf p}(v,u)} c_{\lab_{\bf p}(\underline{e}),\lab_{\bf p}(\bar{e})}.
    }
Indeed, if $u_{\ell-1}$ is older than $ u_\ell$ (that is, $\lab_{\bf{p}}(u_\ell) = \rY$), an outgoing edge of $u_{\ell}$ has already been revealed in the segment $\{u_0,\dots, u_\ell\}$ and, if $\lab_{\bf{p}}(u_{\ell+1}) = \rO$, we have only one choice for the edge $(u_\ell, u_{\ell+1})$. For all other cases, we have $m$ choices for this edge. This gives rise to the factors involving $c_{st}$. 
We arrive at  
    \eqan{\label{for:s2:UB-3a}
        \expec\big[ \susceptibilitypi{n}{2} \big] 
        &\leq 1+ \frac{1}{n}\sum\limits_{\ell=1}^{\infty} \sum\limits_{u_0,u_1,\cdots,u_{\ell}\in[n]}
        \prod\limits_{i=1}^{\ell} \pi \frac{c_{\lab_{\bf p}(u_{i-1}),\lab_{\bf p}(u_{i})}}{(u_{i-1}\vee u_i)}.
        }
\smallskip
We can further rewrite this as
        \eqan{
        \label{for:s2:UB-3}
        \expec\big[ \susceptibilitypi{n}{2} \big] &\leq 1+ \frac{1}{n}\sum\limits_{\ell=1}^{\infty} \sum\limits_{u_0,u_1,\cdots,u_{\ell}\in[n]} \prod\limits_{i=1}^{\ell} \tfrac{\pi}{n} \kappa \Big( \big(\tfrac{u_{i-1}}{n},\lab_{\bf p}(u_{i-1})\big), \big(\tfrac{u_{i}}{n},\lab_{\bf p}(u_{i})\big) \Big)~,
    }
    where \(\kappa\) is the kernel of the mean-offspring operator for the local limit as in Lemma \ref{lem:size-identity}.
    Upper bounding the sums by integrals, using that, for $a,b>0$,
        \eqn{
        \frac{1}{\lceil a\vee b\rceil}\leq \frac{1}{a\vee b},
        }
    we can upper bound
        \(\expec\big[ \susceptibilitypi{n}{2} \big]\) 
    by
    \eqn{\label{for:s2:UB-4}
        \expec\big[ \susceptibilitypi{n}{2} \big] \leq 1+ \sum\limits_{\ell=1}^{\infty} \pi^\ell \int\limits_0^1 \cdots \int\limits_0^1
        \sum\limits_{t_0, \ldots, t_\ell \in\{\srO,\srY\}}\prod\limits_{i=1}^\ell\kappa \Big( \big(x_{i-1},t_{i-1} \big),\big(x_{i},t_i\big)  \Big)\prod\limits_{i=0}^\ell \,dx_i ~,
    }
where, by convention, $t_0=\rO$. Now using Lemma \ref{lem:size-identity} shows that
\eqn{\label{for:s2:UB-5}
        \expec\big[ \susceptibilitypi{n}{2} \big] \leq \expec \big[ |\clusterpi{\varnothing}{\infty}| \big]~.
    }
and thus completes the first assertion of part(c) namely the upper bound.

    To prove the convergence of expectations, note that by part (a), $|\clusterpi{o_n}{n}|$ converges weakly to $|\clusterpi{\sss\varnothing}{\infty}|$. Thus, by Fatou's Lemma,
    $$
    \liminf_{n \to \infty}\expec\big[|\clusterpi{o_n}{n}|\big] \ge \expec \big[ |\clusterpi{\sss \varnothing}{\infty}| \big]~.
    $$
    The assertion 
    $$
    \lim_{n \to \infty}\expec\big[ \susceptibilitypi{n}{2} \big] = \lim_{n \to \infty}\expec\big[|\clusterpi{o_n}{n}|\big] = \expec \big[ |\clusterpi{\sss \varnothing}{\infty}| \big]
    $$
    now follows by combining this observation with the upper bound.
\end{subproof}

This completes the proof of  Proposition \ref{prop:expec-s2-convg}.
\end{proof}

\section{Weak upper bounds on component sizes}
\label{sec-weak-bounds-components}
In this section, we obtain upper bounds on the sizes of connected components, as well as the maximal component upto an arbitrarily small but positive correction to the true scaling exponent. These bounds are later bootstrapped to remove the correction and play a crucial role in the proofs of those results. We start in Section \ref{sec-weak-bound-expected-component} by proving a bound on the expected size of the component of a vertex $v\in[n]$. This bound is crucially used in Section 
\ref{sec-weak-almost sure-bound-component} to prove a weak almost sure upper bound on the maximal component.

\subsection{Weak upper bound on component size of a given vertex}
\label{sec-weak-bound-expected-component}

For $v \in [n]$, recall  that $\clusterpi{v}{n}$ denotes the component containing $v$ in $\Gpi{n}$.

\begin{prop}[Weak upper bound on expected component size]
\label{prop:expec-upper-bound}
    For any $\beta \in \left(\alpha(\pi)  
    ,\tfrac{1}{2} \right)$, there exists $C_{\sss \beta} > 0$ such that, for all $v\in [n]$,
\[
 \mathbb{E} \left[ | \clusterpi{v}{n} | \right] \leq C_{\sss \beta} \left(\frac{n}{v}\right)^\beta.
\]
\end{prop}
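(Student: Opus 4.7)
The plan is to mirror the union-bound-over-paths strategy used in the proof of Proposition \ref{prop:expec-s2-convg}(c), now keeping the starting vertex fixed at $v$. Applying the same steps as in \eqref{for:s2:UB-1}--\eqref{for:s2:UB-3a} to $\mathbb{E}[|\clusterpi{v}{n}|]=\sum_{u\in[n]}\mathbb{P}(u\in\clusterpi{v}{n})$ and expanding via edge-labelled self-avoiding paths from $v$, I would obtain
\[
\mathbb{E}[|\clusterpi{v}{n}|]\le 1+\sum_{\ell=1}^{\infty}\pi^\ell \sum_{u_1,\ldots,u_\ell\in[n]}\prod_{i=1}^\ell \frac{c_{\lab_{\bf p}(u_{i-1}),\lab_{\bf p}(u_i)}}{u_{i-1}\vee u_i},
\]
with $u_0:=v$ and $\lab_{\bf p}(u_0)=\rO$. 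Retaining the label structure is essential here: the crude bound $c_{st}\le 2$ only closes the resulting geometric series for $\pi<1/8$, which falls well short of $\pi_c$.

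For $s\in\{\rO,\rY\}$ I would then introduce the unrestricted-path functional
\[
J_\ell(v,s):=\sum_{u_1,\ldots,u_\ell\in[n]}\prod_{i=1}^\ell \frac{c_{\lab(u_{i-1}),\lab(u_i)}}{u_{i-1}\vee u_i},\qquad \lab(u_0):=s,
\]
so that $\mathbb{E}[|\clusterpi{v}{n}|]\le \sum_{\ell\ge 0}\pi^\ell J_\ell(v,\rO)$ (dropping the self-avoiding constraint only inflates the bound). Splitting the $u_1$-sum according to whether $u_1<v$ (forcing $\lab(u_1)=\rO$) or $u_1>v$ (forcing $\lab(u_1)=\rY$) produces the clean recursion
\[
J_\ell(v,s)=\frac{c_{s\srO}}{v}\sum_{u<v}J_{\ell-1}(u,\rO)+c_{s\srY}\sum_{u>v}\frac{J_{\ell-1}(u,\rY)}{u}.
\]

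I would then proceed by induction on $\ell$ with the ansatz $J_\ell(v,s)\le A^{(\ell)}_s (n/v)^\beta$, starting from $A^{(0)}_s=1$. Using the elementary estimates $\sum_{u<v}u^{-\beta}\le v^{1-\beta}/(1-\beta)$ and $\sum_{u>v}u^{-\beta-1}\le v^{-\beta}/\beta$, the induction closes with the matrix recurrence $A^{(\ell)}\le M_\beta A^{(\ell-1)}$ componentwise, where
\[
M_\beta=\begin{pmatrix} c_{\srO\srO}/(1-\beta) & c_{\srO\srY}/\beta \\ c_{\srY\srO}/(1-\beta) & c_{\srY\srY}/\beta \end{pmatrix}=\begin{pmatrix} 2/(1-\beta) & 2/\beta \\ 1/(1-\beta) & 2/\beta \end{pmatrix}.
\]
A direct computation yields $\operatorname{tr}(M_\beta)=\det(M_\beta)=2u$, where $u:=1/(\beta(1-\beta))$, so the characteristic polynomial is $\lambda^2-2u\lambda+2u$ and the Perron--Frobenius eigenvalue equals $\rho(M_\beta)=u+\sqrt{u^2-2u}$.

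The series $\sum_\ell \pi^\ell A^{(\ell)}_\srO$ therefore converges as soon as $\pi\rho(M_\beta)<1$. Isolating the square root and squaring reduces this inequality to $\beta(1-\beta)>2\pi(1-\pi)$, and the identity $\alpha(\pi)(1-\alpha(\pi))=2\pi(1-\pi)$, which is immediate from \eqref{eqn:alpha-pi}, shows this is equivalent to $\beta\in(\alpha(\pi),1-\alpha(\pi))$; in particular it covers every $\beta\in(\alpha(\pi),\tfrac12)$. Summing the geometric series then produces the asserted constant $C_\beta$. The central subtlety is arithmetic rather than analytic: the sharp threshold emerges only because the matrix $C$ with $c_{\srY\srO}=1$ and $c_{st}=2$ otherwise happens to satisfy $\operatorname{tr}=\det$; replacing $c_{st}$ by the crude bound $2$ collapses $M_\beta$ to a rank-one matrix with spectral radius $2u$, yielding only the weaker condition $\beta(1-\beta)>2\pi$. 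A secondary care-point is absorbing the additive lower-order corrections in the discrete-to-continuum estimates, which can be handled by a slight enlargement of $C_\beta$.
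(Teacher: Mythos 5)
Your proof is correct and is, in substance, identical to the paper's: the same labelled self-avoiding-path expansion, the same matrix $M_\beta$, the same Perron eigenvalue $\lambda_\beta$, and the same threshold condition $\pi\lambda_\beta<1$, which you rephrase cleanly via the identity $\beta(1-\beta)>2\pi(1-\pi)=\alpha(\pi)\bigl(1-\alpha(\pi)\bigr)$. The only presentational difference is that the paper telescopes the length-$d$ sum against a Perron eigenvector of $M_\beta$ (obtaining the factor $(\pi\lambda_\beta)^d$ in one pass), while you run the equivalent power iteration as an explicit induction in $\ell$ with the componentwise recurrence $A^{(\ell)}\le M_\beta A^{(\ell-1)}$; both realize the same spectral bound and close the geometric series for all $\beta\in(\alpha(\pi),\tfrac12)$.
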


To prove Proposition \ref{prop:expec-upper-bound}, we write the expected cluster 
size of \(v\) as
\eqn{\label{eq:cluster-size:path-expansion}
    \expec\big[ | \clusterpi{v}{n} | \big] = \sum\limits_{u\in[n]}\prob\big(\{u\leftrightsquigarrow v\}\subseteq G_n^\pi\big),
}
where \(\{u\leftrightsquigarrow v\}\) refers to the event that \(u\) and \(v\) are connected. To obtain an upper bound, we use the union bound for the right hand side of \eqref{eq:cluster-size:path-expansion} as
\eqn{\label{eq:cluster-size:union-bound}
    \expec\big[ | \clusterpi{v}{n} | \big] \leq \sum\limits_{u\in[n]}\sum\limits_{{\bf p}(v,u)}\prob\big( {\bf p}(v,u)\subseteq G_n^\pi \big),
}
where the sum over \({\bf p}(v,u)\) is over all self-avoiding paths between \(v\) and \(u\), i.e., paths that visit all vertices at most once. To upper bound the rhs of \eqref{eq:cluster-size:union-bound}, we
make use of \emph{labeled paths}, a generalization of which is \emph{(vertex)-labeled trees}, defined as follows:

\begin{defn}[(Vertex-)labeled tree]\label{defn:labeled-tree}
    For any finite rooted tree \(\Tree\) with vertices in \([N]\) for some \(N\in\N\), we define the \emph{labeled tree} \(\Tree_\ell\) by assigning the vertices of the tree labels in \(\{\rO,\rY\}\). For any vertex \(u \in \Tree\), its label in \(\Tree_\ell\) is given by
    $$
        \lab_{\mathrm{ \textsf{T}}}(u) = \begin{cases}
            &\rO\quad \text{if}~u~\text{is the root of}~\Tree;\\
            &\rY\quad \text{if}~\mathfrak{p}(u)<u;\\
            &\rO\quad \text{if}~\mathfrak{p}(u)>u,
        \end{cases}
    $$
    where \(\mathfrak{p}(u)\) is the parent of \(u\) in \(\Tree\).
\end{defn}

\begin{proof}[Proof of Proposition~\ref{prop:expec-upper-bound}]
Let $v \in [n]$. We will prove a slightly stronger statement, namely, that
        \eqan{
        \label{base-case:1}
        \sum\limits_{u\in[n]}\sum\limits_{{\bf p}(v,u)}\prob\big( {\bf p}(v,u)\subseteq G_n^\pi \big)\big(\frac{n}{u}\big)^\beta
        \leq C_\beta \big(\frac{n}{v}\big)^\beta.
        }
     For a self-avoiding `path' $\boldsymbol{\omega} = (\omega_0 = v, \omega_1, \dots, \omega_d) \in [n]^{d+1}$ of length $d$, rooted at $\omega_0$, we will denote the corresponding (vertex-)labeled path by
    $$
    \boldsymbol{\omega}_\ell = \left((\omega_0,\lab_{\boldsymbol{\omega}}(\omega_0)),(\omega_1,\lab_{\boldsymbol{\omega}}(\omega_1)),\ldots,(\omega_d,\lab_{\boldsymbol{\omega}}(\omega_d))\right),
    $$
    where $\lab_{\boldsymbol{\omega}}$ is as defined in Definition \ref{defn:labeled-tree}.
    
    We will denote by ${\bf P}_d(v,u)$ the set of all vertex-labeled paths from \(v\) to \(u\) of length \(d\).
    Note that $d=0$ for \(u=v\). Therefore,
    \eqan{\label{base-case:1a}
        \sum\limits_{u\in[n]}\sum\limits_{{\bf p}(v,u)}\prob\big( {\bf p}(v,u)\subseteq G_n^\pi \big)\big(\frac{n}{u}\big)^\beta = \sum\limits_{u\in[n]} \sum\limits_{d=0}^\infty \sum\limits_{\boldsymbol{\omega}_\ell \in {\bf P}_d(v,u)} \prob\big( \boldsymbol{\omega}_\ell \subseteq  G_n^\pi \big)\big(\frac{n}{u}\big)^\beta~.
    }
    For \(u\neq v\) and \(d=0\), we set the probability on the rhs to be \(0\). 

    By Fubini's Theorem, for any fixed \(d\geq 0\), we can thus write
    \eqan{\label{base-case:2}
        \sum\limits_{u\in[n]} \sum\limits_{\boldsymbol{\omega}_\ell \in {\bf P}_d(v,u)} \prob\big( \boldsymbol{\omega}_\ell \subseteq  G_n^\pi \big)\big(\frac{n}{u}\big)^\beta = \pi^d\sum\limits_{\omega_1,\cdots,\omega_{d}\in[n]} 
        \big(\frac{n}{\omega_d}\big)^\beta
        \prod\limits_{i=1}^d \frac{c_{\sss \lab_{\boldsymbol{\omega}}(\omega_{i-1})\lab_{\boldsymbol{\omega}}(\omega_i)}}{(\omega_{i-1}\vee \omega_{i})},
    }
    where $\boldsymbol{\omega}_\ell$ denotes the vertex-labeled path corresponding to $\boldsymbol{\omega} = (\omega_0 = v, \omega_1, \dots, \omega_d) \in [n]^{d+1}$, and as before Lemma \ref{lem:size-identity}, \(c_{\srY\srO} =1\), while \(c_{\rm st}=2\) for any other choice of \(s,t\in\{\rO,\rY\}\). 
    Let \[M_\beta=\begin{bmatrix}
        \tfrac{2}{1-\beta} & \tfrac{2}{\beta}\\
        \tfrac{1}{1-\beta} & \tfrac{2}{\beta}
    \end{bmatrix},\]
    where the rows and columns are ordered as $(\rO,\rY)$, and let
    $$
    \lambda_\beta=\frac{1 +\sqrt{1-2\beta(1-\beta)}}{\beta(1-\beta)}
    $$
    denote the largest eigenvalue of \(M_\beta\), and \(\boldsymbol{\nu}_{\sss \beta}=(\nu_{\srO},\nu_{\srY})\) the eigenvector corresponding to \(\lambda_\beta\).
    Writing $\varphi_\beta(u)=u^{-\beta}$, we bound the rhs of \eqref{base-case:2} as
    \eqan{\label{base-case:3}
        &\sum\limits_{\omega_1,\cdots,\omega_{d}\in[n]} \big(\frac{n}{\omega_d}\big)^\beta\prod\limits_{i=1}^d \frac{c_{\sss \lab_{\boldsymbol{\omega}}(\omega_{i-1})\lab_{\boldsymbol{\omega}}(\omega_i)}}{(\omega_{i-1}\vee \omega_{i})} \nn\\
        &\hspace{3cm}\le c_\beta \big(\frac{n}{v}\big)^\beta\sum\limits_{\omega_1,\cdots,\omega_{d}\in[n]} \frac{\nu_{\lab_{\boldsymbol{\omega}}(\omega_d)}\varphi_{\beta}(\omega_d)}{\nu_{\lab_{\boldsymbol{\omega}}(\omega_0)}\varphi_\beta(\omega_0)}\prod\limits_{i=1}^d \frac{c_{\sss \lab_{\boldsymbol{\omega}}(\omega_{i-1})\lab_{\boldsymbol{\omega}}(\omega_i)}}{(\omega_{i-1}\vee \omega_{i})},
    }
    for some constant \(c_\beta>0\), depending only on the choice of \(\beta\).
    Now, observe that for any $j \in [d]$, the label $\lab_{\boldsymbol{\omega}}(\omega_j)$ in the path $\boldsymbol{\omega}$ is determined by $\omega_0, \omega_1, \dots, \omega_{j-1}$ and whether $\omega_j < \omega_{j-1}$ or not. Thus, for given $\omega_0, \omega_1, \dots, \omega_{j-1}$, using an integral bound and the eigenfunction property of \(M_\beta\),
    \eqan{\label{base-case:4}
        &\sum\limits_{\omega_j \in[n]} \frac{\nu_{\lab_{\boldsymbol{\omega}}(\omega_j)}\varphi_{\beta}(\omega_j)}{\nu_{\lab_{\boldsymbol{\omega}}(\omega_{j-1})}\varphi_\beta(\omega_{j-1})} \,\frac{c_{\sss \lab_{\boldsymbol{\omega}}(\omega_{j-1})\lab_{\boldsymbol{\omega}}(\omega_j)}}{\omega_{j-1}\vee \omega_{j}}\nn\\
        &\hspace{2cm}= \frac{1}{\nu_{\lab_{\boldsymbol{\omega}}(\omega_{j-1})}\varphi_\beta(\omega_{j-1})}\left(\sum\limits_{z=1}^{\omega_{j-1}-1}\frac{c_{\sss \lab_{\boldsymbol{\omega}}(\omega_{j-1})\srO}\nu_{\srO}}{\omega_{j-1}z^{\beta}}+\sum\limits_{z=\omega_{j-1}+1}^n\frac{c_{\sss \lab_{\boldsymbol{\omega}}(\omega_{j-1})\srY}\nu_{\srY}}{z^{1+\beta}}\right)\nn\\
        &\hspace{2cm}\leq \frac{1}{\nu_{\lab_{\boldsymbol{\omega}}(\omega_{j-1})}}\left(\left(M_\beta\right)_{\sss \lab_{\boldsymbol{\omega}}(\omega_{j-1})\srO}\nu_{\srO}
        +\left(M_\beta\right)_{\sss \lab_{\boldsymbol{\omega}}(\omega_{j-1})\srY}\nu_{\srY}\right)\nn\\
        &\hspace{2cm}= \lambda_\beta.
    }
    By \eqref{base-case:3} and \eqref{base-case:4}, we bound the lhs of \eqref{base-case:2} as
    \eqan{\label{base-case:5}
       \sum\limits_{u\in[n]} \sum\limits_{\boldsymbol{\omega}_\ell \in {\bf P}_d(v,u)} \prob\big( \boldsymbol{\omega}_\ell \subseteq  G_n^\pi \big)\big(\frac{n}{u}\big)^\beta \leq c_\beta (\pi\lambda_\beta)^d\big(\frac{n}{v}\big)^\beta.
    }
    For the rhs of \eqref{base-case:5} to be summable, we need \(\pi\lambda_\beta<1\) for all \(\pi< \pi_c.\) Now solving for \(\beta\) in \(\pi\lambda_\beta=1\) and choosing the smallest solution, we obtain that \(\pi\lambda_\beta<1\) for \(\beta\in(\alpha(\pi),\tfrac{1}{2})\) and all \(\pi<\pi_c\).
    
    Thus, for $\pi < \pi_c$ and \(\beta\in(\alpha(\pi),\tfrac{1}{2})\), summing over \(d\) and choosing \(C_\beta=\tfrac{c_\beta}{1-\pi\lambda_\beta}\), we bound the lhs of \eqref{base-case:1} as
    \eqn{\label{base-case:6}
        \sum\limits_{u\in[n]}\sum\limits_{{\bf p}(v,u)}\prob\big( {\bf p}(v,u)\subseteq G_n^\pi \big)\big(\frac{n}{u}\big)^\beta \leq C_\beta \big(\frac{n}{v}\big)^\beta.
    }
    Hence, \eqref{eq:cluster-size:union-bound} and \eqref{base-case:6} complete the proof of Proposition~\ref{prop:expec-upper-bound}.
\end{proof}

\subsection{Weak almost sure upper bounds on the size of the maximal component}
\label{sec-weak-almost sure-bound-component}
The goal of this section is to prove the following proposition: 

\begin{prop}[Weak upper bound on the maximal component]
\label{prop:max-as-up-bound}
    For all $\vep>0$, the maximal component satisfies that, almost surely,
    $$\limsup\limits_{n \rightarrow \infty}|\clusterpi{\max}{n}|/n^{\alpha(\pi) + \vep} \leq 1~.$$
\end{prop}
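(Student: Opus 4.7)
The plan is to upgrade the expectation bound of Proposition~\ref{prop:expec-upper-bound} to an almost-sure limsup statement via higher moment estimates, Markov's inequality, and Borel--Cantelli along a geometric subsequence, using the monotonicity of $n\mapsto|\clusterpi{\max}{n}|$ to interpolate.

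First, I would extend the path-counting argument underlying Proposition~\ref{prop:expec-upper-bound} to higher moments. Concretely, for each $\beta\in(\alpha(\pi),1/2)$ and each integer $k\geq 2$ satisfying the eigenvalue condition $(k-1)\beta<1-\alpha(\pi)$ (equivalently $\pi\lambda_{(k-1)\beta}<1$), I expect a bound
\[
\mathbb{E}\big[|\clusterpi{v}{n}|^k\big] \;\leq\; C_{k,\beta}\,\bigg(\frac{n}{v}\bigg)^{k\beta}.
\]
The derivation starts from the Aizenman--Newman tree-graph (BK-type) inequality, valid for $G^\pi_n$ conditionally on $G_n$ since the percolated edges are independent:
\[
\mathbb{P}\big(v\leftrightsquigarrow u_1,\dots,v\leftrightsquigarrow u_{k-1}\big)\;\leq\;\sum_{w\in[n]}\tau(v,w)\prod_{i=1}^{k-1}\tau(w,u_i),
\]
summed over the appropriate tree topologies. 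Summing over $u_1,\dots,u_{k-1}$ reduces the problem to controlling $\sum_{w}\tau(v,w)(n/w)^{(k-1)\beta}$, which is handled by the same eigenfunction argument for the transfer matrix $M_{(k-1)\beta}$ as in \eqref{base-case:4}, provided the eigenvalue condition holds.

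Second, the elementary pointwise inequality $|\clusterpi{\max}{n}|^k\leq\sum_{v\in[n]}|\clusterpi{v}{n}|^{k-1}$ (each vertex of the maximal component contributes at least $|\clusterpi{\max}{n}|^{k-1}$ to the sum) yields
\[
\mathbb{E}\big[|\clusterpi{\max}{n}|^k\big]\;\leq\;\sum_{v\in[n]}\mathbb{E}\big[|\clusterpi{v}{n}|^{k-1}\big]\;\leq\;C'_{k-1,\beta}\,n^{(k-1)\beta}\sum_{v\geq 1}v^{-(k-1)\beta}\;=\;O\big(n^{(k-1)\beta}\big)
\]
once $(k-1)\beta>1$ renders the $v$-sum convergent. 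Markov's inequality then gives
\[
\mathbb{P}\big(|\clusterpi{\max}{n}|>n^{\alpha(\pi)+\varepsilon}\big)\;\leq\;C\,n^{(k-1)\beta-k(\alpha(\pi)+\varepsilon)}.
\]
Choosing $\beta\in(\alpha(\pi),\alpha(\pi)+\varepsilon)$ and $k$ sufficiently large (within the eigenvalue constraint), the exponent can be made strictly negative, hence summable along the subsequence $n_j=2^j$. Borel--Cantelli gives $|\clusterpi{\max}{n_j}|\leq n_j^{\alpha(\pi)+\varepsilon'}$ eventually almost surely for any $\varepsilon'<\varepsilon$, and monotonicity $|\clusterpi{\max}{n}|\leq|\clusterpi{\max}{n_{j+1}}|$ for $n\in[n_j,n_{j+1}]$ gives $|\clusterpi{\max}{n}|\leq 2^{\alpha(\pi)+\varepsilon'}n^{\alpha(\pi)+\varepsilon'}$, so $|\clusterpi{\max}{n}|/n^{\alpha(\pi)+\varepsilon}\to 0\leq 1$ almost surely. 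Taking $\varepsilon$ along a countable sequence tending to $0$ yields the statement uniformly.

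The main obstacle is the eigenvalue constraint $(k-1)\beta<1-\alpha(\pi)$. Near criticality, $\alpha(\pi)\to 1/2$ forces $k-1<(1-\alpha(\pi))/\beta$ close to $1$, so only very few moments are accessible through a direct BK decomposition, and the tail probabilities from Markov may not be small enough to beat the union bound over all late vertices. Overcoming this near criticality will likely require either a direct $k$-tuple path expansion extending \eqref{base-case:1a}--\eqref{base-case:5} to $k$-fold paths rooted at $v$ (tracking the shared combinatorics without passing through BK, which should access the full range $k\beta<1-\alpha(\pi)$), or alternatively the continuous-time supermartingale $M_v(t)=|\clusterpi{v}{t}|\exp\big(-\int_{\tau_v}^t[2\pi^2\csusceptibilitypi{u}{2}+2\pi]\,\mathrm{d}u\big)$ from \eqref{super-martingale}, combined with Doob's maximal inequality and tail control on the integral $\int_{\tau_v}^t\csusceptibilitypi{u}{2}\,\mathrm{d}u$ obtained from the bound $\mathbb{E}[\susceptibilitypi{n}{2}]\to\susceptibilitypi{\infty}{2}$ of Proposition~\ref{prop:expec-s2-convg}(c).
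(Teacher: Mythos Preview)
Your overall architecture --- higher moment bounds via tree-graph inequalities, then Markov plus Borel--Cantelli along a geometric subsequence, then monotonicity to interpolate --- is exactly what the paper does. But your intermediate arithmetic is inconsistent: you require both $(k-1)\beta>1$ (so that the $v$-sum $\sum_v v^{-(k-1)\beta}$ converges) and the eigenvalue constraint $(k-1)\beta<1-\alpha(\pi)$ from your single-center BK decomposition. Since $\alpha(\pi)>0$, these are incompatible for \emph{every} $\pi\in(0,\pi_c)$, not only near criticality. If you drop the first requirement and accept $\sum_{v\leq n}(n/v)^{(k-1)\beta}=O(n)$, Markov gives a tail of order $n^{1-k(\alpha(\pi)+\varepsilon)}$; summability then forces $k>1/(\alpha(\pi)+\varepsilon)$, while the eigenvalue side forces roughly $k<1/\alpha(\pi)$. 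Squeezing an integer into this window works only for well-chosen $\varepsilon$, and you would have to chain over a sequence $\varepsilon_j\downarrow 0$.

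The paper sidesteps all of this by proving the moment bound $\expec\big[|\clusterpi{\sss\geq v}{n}|^k\big]\leq C_k(n/v)^{k\beta}$ for \emph{all} $k\geq 1$ with the \emph{same} $\beta\in(\alpha(\pi),\tfrac12)$ (Lemma~\ref{lem-higher-moment-bound}). The mechanism is an induction on $k$ over self-avoiding trees with $k$ terminal leaves rooted at $v$, not a star through a single center. The crucial device is the strengthened base case~\eqref{base-case:1},
\[
\sum_{u\in[n]}\sum_{\mathbf{p}(v,u)}\prob\big(\mathbf{p}(v,u)\subseteq G_n^\pi\big)\Big(\frac{n}{u}\Big)^{\!\beta}\;\leq\; C_\beta\Big(\frac{n}{v}\Big)^{\!\beta},
\]
carrying the weight $(n/u)^\beta$ on the \emph{endpoint} of each path. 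In the inductive step one locates a branch point $z_{ij}$ whose two outgoing paths to leaves $u_i,u_j$ are disjoint from the rest of the tree, peels them off using the base case (producing a factor $(n/z_{ij})^{2\beta}\leq(n/v)^\beta(n/z_{ij})^\beta$), and applies the hypothesis at level $k-1$ to the remaining tree with $z_{ij}$ as a new leaf. The transfer matrix is never re-evaluated at a new exponent, so no eigenvalue constraint on $k$ arises. This is precisely the ``direct $k$-tuple path expansion'' you name as a fix.

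Organizationally, the paper then splits the maximum into components seeded before and after time $n^a$: the early part (Lemma~\ref{lem-almost-sure-upper-bound-early-UA}) uses only the first-moment bound of Proposition~\ref{prop:expec-upper-bound}, while the late part (Lemma~\ref{lem-almost-sure-upper-bound-late-UA}) uses a union bound over $v\in[n^a,n]$ together with Lemma~\ref{lem-higher-moment-bound} for $k$ as large as needed. Your device $|\clusterpi{\max}{n}|^k\leq\sum_v|\clusterpi{v}{n}|^{k-1}$ would also succeed once Lemma~\ref{lem-higher-moment-bound} is available, so the early/late split is a stylistic rather than essential difference.
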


The proof of Proposition \ref{prop:max-as-up-bound} is broken into two steps, bounding the connected component size of the \emph{late} vertices, followed by bounding the connected component size of the \emph{early} vertices. We start with the early vertices. Recall that $\clusterpi{i}{n}$ denotes the component of vertex $i$. For \(N\in[n]\), define
\eqn{\label{def:early-cluster}
    \clusterpi{[N]}{n} = \bigcup\limits_{u\in[N]} \clusterpi{u}{n}~.
}
The following lemma provides an upper bound for the size of the union of component sizes containing the early vertices:

\begin{lem}[Almost sure upper bound on early components]
\label{lem-almost-sure-upper-bound-early-UA}
    For all $\vep >0$ and \( a(\vep)< \vep/8\), almost surely, for all sufficiently large $n$, 
    \eqn{
    |\clusterpi{[n^{a(\vep)}]}{n}| \leq n^{\alpha(\pi)+\vep}.
    }
\end{lem}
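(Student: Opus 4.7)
The plan is to combine the first-moment bound in Proposition \ref{prop:expec-upper-bound} with a Markov/Borel--Cantelli argument along a geometric subsequence, and then pass to all $n$ by monotonicity in $n$ of the set $\clusterpi{[N]}{n}$.

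First I would bound the expectation crudely by a union over early vertices: since $|\clusterpi{[N]}{n}|\leq \sum_{v=1}^N |\clusterpi{v}{n}|$, Proposition \ref{prop:expec-upper-bound} gives
\[
\expec\big[|\clusterpi{[N]}{n}|\big]\leq \sum_{v=1}^N C_\beta \Big(\frac{n}{v}\Big)^\beta \leq C'_\beta \, n^{\beta} N^{1-\beta},
\]
for any $\beta\in(\alpha(\pi),\tfrac12)$. Substituting $N=n^{a(\vep)}$ yields
$\expec[|\clusterpi{[n^{a(\vep)}]}{n}|]\leq C'_\beta\, n^{\beta+a(\vep)(1-\beta)}$. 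Markov's inequality then gives
\[
\prob\Big(|\clusterpi{[n^{a(\vep)}]}{n}|>n^{\alpha(\pi)+\vep}\Big)\leq C'_\beta\, n^{-\delta},\qquad \delta:=\vep-(\beta-\alpha(\pi))-a(\vep)(1-\beta).
\]
By choosing $\beta=\alpha(\pi)+\vep/4$, the condition $a(\vep)<\vep/8$ comfortably ensures $\delta>0$ (in fact any $a(\vep)<\vep/(1-\alpha(\pi))$ works with appropriate $\beta$; the choice $\vep/8$ gives room to cover the losses in the subsequence/interpolation step below).

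Next I would run a Borel--Cantelli argument along a geometric subsequence $n_k=2^k$. Since $\sum_k n_k^{-\delta}<\infty$, almost surely
$|\clusterpi{[n_k^{a(\vep)}]}{n_k}|\leq n_k^{\alpha(\pi)+\vep}$ for all sufficiently large $k$. To move from the subsequence to all $n$, I would use the key monotonicity: for each fixed $N$, the set $\clusterpi{[N]}{n}$ is non-decreasing in $n$ (new vertices can only attach to existing components), and $n\mapsto n^{a(\vep)}$ is non-decreasing. Hence for $n\in[n_k,n_{k+1}]$,
\[
|\clusterpi{[n^{a(\vep)}]}{n}|\leq |\clusterpi{[n_{k+1}^{a(\vep)}]}{n_{k+1}}|\leq n_{k+1}^{\alpha(\pi)+\vep}\leq 2^{\alpha(\pi)+\vep} n^{\alpha(\pi)+\vep}.
\]
This yields the right exponent but loses a harmless constant. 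To obtain the precise statement with constant $1$, I would simply apply the Markov/Borel--Cantelli step with $\vep'=\vep/2$ in place of $\vep$ (and the same $a(\vep)<\vep/8<\vep'/4$, which is still admissible), and then absorb the constant $2^{\alpha(\pi)+\vep'}$ into the gap $n^{\vep-\vep'}=n^{\vep/2}$ for $n$ large.

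The only subtle point is the interpolation argument: one must make sure that the sub- to full-sequence passage does not eat into the exponent $\alpha(\pi)+\vep$, which is why the argument is run with a slightly smaller target exponent and then inflated by absorbing the multiplicative factor into $n^{\vep/2}$. Everything else is a direct application of the expectation bound in Proposition \ref{prop:expec-upper-bound}.
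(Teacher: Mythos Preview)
The proposal is correct and follows essentially the same approach as the paper: bound $\expec[|\clusterpi{[n^{a(\vep)}]}{n}|]$ via the union of Proposition~\ref{prop:expec-upper-bound} over the early vertices, apply Markov and Borel--Cantelli along a geometric subsequence (the paper uses $n_k=\e^k$ rather than $2^k$), and then interpolate to all $n$ using the monotonicity $|\clusterpi{[n^{a(\vep)}]}{n}|\le |\clusterpi{[n_{k}^{a(\vep)}]}{n_{k}}|$ for $n\in(n_{k-1},n_k]$. The paper likewise runs the subsequence argument with target exponent $\alpha(\pi)+\vep/2$ (which is why $a(\vep)<\vep/8$ rather than $\vep/4$ is needed) and then absorbs the multiplicative constant $\e^{\alpha(\pi)+\vep/2}$ into $n^{\vep/2}$, exactly as you propose.
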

We next show that the sizes of the other components are significantly smaller than the ones containing the early vertices. Recall the definition of $\clusterpi{\sss \geq v}{n}$ from \eqref{eqn:cc-less-def}. 

\begin{lem}[Bound on the component sizes of late vertices]\label{lem-almost-sure-upper-bound-late-UA}
    Given any $a >0$, there exists $\vep = \vep(a)>0$ such that, almost surely, for all sufficiently large $n$,
    \[\max_{n^a \leq i \leq  n} |\clusterpi{\sss \geq i}{n}| \leq n^{\alpha(\pi) - \vep}.\]
\end{lem}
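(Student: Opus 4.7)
The proof combines a high-moment extension of Proposition~\ref{prop:expec-upper-bound} with Markov's inequality and Borel-Cantelli. A first-moment bound alone is insufficient: a union bound over the $\asymp n$ candidate indices $i \in [n^a, n]$ costs a factor of $n$, which the exponent $\alpha(\pi) < \tfrac{1}{2}$ cannot absorb. I would fix $a > 0$ and choose $\beta \in \bigl(\alpha(\pi),\,\min\{\alpha(\pi)/(1-a),\,\tfrac{1}{2}\}\bigr)$, a nonempty interval because $a > 0$ and $\alpha(\pi) < \tfrac{1}{2}$; this guarantees $\vep_0 := \alpha(\pi) - \beta(1-a) > 0$, and the claim will be shown with $\vep := \vep_0/2$.

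The key step is a $k$-th moment extension of Proposition~\ref{prop:expec-upper-bound}: for each $k \in \N$ there exists $C_{k,\beta} < \infty$ such that
\[
\expec\bigl[|\clusterpi{v}{n}|^{k}\bigr] \leq C_{k,\beta}\,(n/v)^{k\beta} \quad \text{for all } v \in [n].
\]
Expanding $|\clusterpi{v}{n}|^k = \sum_{u_1,\ldots,u_k}\indic{v \leftrightsquigarrow u_j \,\forall j}$ and invoking an Aizenman--Newman-type tree-graph inequality applied conditionally on the realized UA graph (and then taking expectation), the $k$-point connection probability is dominated by a sum over trees on $\{v, u_1, \ldots, u_k\}$ (with possibly additional Steiner vertices) weighted by products of two-point connectivities. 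Summing over each branch emanating from $v$ using the $M_\beta$-eigenvalue argument of Proposition~\ref{prop:expec-upper-bound} (with its eigenvector weights $\boldsymbol{\nu}_\beta$) yields one factor of $(n/v)^\beta$ per branch, giving the claim with $C_{k,\beta}$ absorbing the combinatorics of trees with $k+1$ marked vertices.

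Given this moment bound, Markov's inequality combined with $|\clusterpi{\sss \geq v}{n}| \leq |\clusterpi{v}{n}|$ yields, for $v \geq n^a$,
\[
\prob\bigl(|\clusterpi{\sss \geq v}{n}| \geq n^{\alpha(\pi)-\vep}\bigr) \leq \frac{\expec\bigl[|\clusterpi{v}{n}|^k\bigr]}{n^{k(\alpha(\pi)-\vep)}} \leq C_{k,\beta}\,n^{k\beta(1-a)-k(\alpha(\pi)-\vep)} = C_{k,\beta}\,n^{-k\vep_0/2},
\]
and a union bound over $v \in [n^a, n]$ gives
\[
\prob\Bigl(\max_{n^a \leq v \leq n}|\clusterpi{\sss \geq v}{n}| \geq n^{\alpha(\pi)-\vep}\Bigr) \leq C_{k,\beta}\,n^{1 - k\vep_0/2}.
\]
Choosing $k$ large enough to drive the exponent below $-1$ makes these probabilities summable in $n$, so Borel-Cantelli delivers the almost sure statement.

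\textbf{Main obstacle.} The serious work is the $k$-th moment estimate. Tree-graph inequalities are classical for percolation on \emph{deterministic} graphs, so one must first apply the inequality conditionally on $G_n$ and then enumerate simultaneously over the tree shape, its Steiner vertices, and the $\{\rO,\rY\}$-labelings that drive the matrix $M_\beta$. The essential point is that the per-branch eigenvalue contraction $\pi \lambda_\beta < 1$ from Proposition~\ref{prop:expec-upper-bound} must survive this multi-branch iteration; proving this compactly requires a weighted path-sum analog of \eqref{base-case:1} in which each branch emanating from a given branching point is summed against the eigenvector $\boldsymbol{\nu}_\beta$. The resulting $C_{k,\beta}$ may blow up factorially in $k$, but this is harmless since only a single large enough fixed $k$ is needed to close the argument.
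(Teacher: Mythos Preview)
The proposal is correct and follows essentially the same route as the paper. The paper formalizes your $k$-th moment estimate as a separate lemma (Lemma~\ref{lem-higher-moment-bound}), proved by an induction on $k$ that directly expands over edge-labeled self-avoiding trees in the UA model (rather than conditioning on $G_n$ and invoking Aizenman--Newman, which would leave the random conditional two-point functions entangled), and then runs precisely your Markov inequality + union bound + Borel--Cantelli argument.
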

We first prove Proposition~\ref{prop:max-as-up-bound} subject to Lemmas~\ref{lem-almost-sure-upper-bound-early-UA} and \ref{lem-almost-sure-upper-bound-late-UA}:

\begin{proof}[Proof of Proposition~\ref{prop:max-as-up-bound} subject to Lemmas~\ref{lem-almost-sure-upper-bound-early-UA} and \ref{lem-almost-sure-upper-bound-late-UA}]
    Fixing any \(\vep>0\), by Lemma~\ref{lem-almost-sure-upper-bound-early-UA}, we obtain \(a(\vep)>0\) such that, almost surely, for all sufficiently large \(n\),
    \eqn{\label{for:prop-as-UB-1}
        |\clusterpi{[n^{a(\vep)}]}{n}| \leq n^{\alpha(\pi)+\vep}~.
    }
    On the other hand, by Lemma~\ref{lem-almost-sure-upper-bound-late-UA}, there exists \(\vep^\prime>0\), such that, almost surely for sufficiently large \(n\),
    \eqn{\label{for:prop-as-UB-2}
        \max_{n^{a(\vep)} \leq v \leq  n} |\clusterpi{\sss \geq v}{n}| \leq n^{\alpha(\pi) - \vep^\prime}~.
    }
    Observing that
    \[
        |\clusterpi{\rm max}{n}| \leq \max\left\{ |\clusterpi{[n^{a(\vep)}]}{n}|~,\max_{n^{a(\vep)} \leq v \leq  n} |\clusterpi{\sss \geq v}{n}| \right\}~,
    \]
    the proof of Proposition \ref{prop:max-as-up-bound} follows immediately from \eqref{for:prop-as-UB-1} and \eqref{for:prop-as-UB-2}.
\end{proof}
Next, we prove Lemmas~\ref{lem-almost-sure-upper-bound-early-UA} and \ref{lem-almost-sure-upper-bound-late-UA}. We start by proving Lemma~\ref{lem-almost-sure-upper-bound-early-UA}:
\begin{proof}[Proof of Lemma~\ref{lem-almost-sure-upper-bound-early-UA}]
    We prove the assertion first for the subsequence of times $n = n_k = \e^k$ for $k\geq 1$, and show that, for all $\vep >0$, there exists \(a(\vep)>0,\) such that almost surely, for sufficiently large $k$,
    \begin{equation}
    | \clusterpi{[n_k]}{n_k}| \leq C n_k^{\alpha(\pi)+\vep}~,
    \label{eqn:942}
    \end{equation}
    for some finite constant \(C\). By Proposition~\ref{prop:expec-upper-bound}, for all \(\beta\in(\alpha(\pi),\tfrac{1}{2})\), 
    \eqn{\label{for:lem:as-UB-1}
        \expec \left[| \clusterpi{[n_k^a]}{n_k} | \right] \leq C_{\sss \beta} \sum\limits_{u\in[n_k^a]}\big( \tfrac{n_k}{u} \big)^{\beta} \leq C_{\sss \beta} n_k^{\beta+a(1-\beta)}~. 
    }
    By Markov's inequality,
    \eqn{\label{for:lem:as-UB-2}
        \prob \left( | \clusterpi{[n_k^a]}{n_k} | > n_k^{\alpha(\pi)+\vep} \right) \leq C_{\sss \beta} n_k^{\beta - \alpha(\pi)-\vep+a(1-\beta)}~.
    }
    Choosing \(\beta=\alpha(\pi)+\vep/2\), and \(a(\vep)<\vep/4\), the exponent of \(n_k\) in the rhs of \eqref{for:lem:as-UB-2} can be shown to be negative, and thus, summing over \(k\in\mathbb{N}\), 
    \eqn{\label{for:lem:as-UB-3}
        \sum\limits_{k=1}^{\infty} \prob \left( | \clusterpi{[n_k^a]}{n_k} | > n_k^{\alpha(\pi)+\vep} \right) \leq C_{\sss \beta} \sum\limits_{k=1}^{\infty} n_k^{\beta - \alpha(\pi)-\vep+a(1-\beta)} < \infty~.
    }
    By the Borel-Cantelli lemma, \eqref{eqn:942} holds true. Now, it remains to prove the same for other values of \(n\) as well. Note that, for all \(n\in (n_{k-1},n_k),\)
    \eqn{\label{for:lem:as-UB-4}
        |\clusterpi{[n^a]}{n}|\leq|\clusterpi{[n_k^a]}{n}|\leq |\clusterpi{[n_k^a]}{n_k}|~. 
    }
    Since \(n\in (n_{k-1},n_k)\), by \eqref{eqn:942} and \eqref{for:lem:as-UB-4}, almost surely, for \(n\) sufficiently large, and \(a(\vep)<\vep/8\),
    \eqn{\label{for:lem:as-UB-5}
        |\clusterpi{[n^{a(\vep)}]}{n}| \leq C_{\sss \beta} \e^{\alpha(\pi)+\vep/2} n^{\alpha(\pi)+\vep/2}.
    }
    For \(n\) sufficiently large, we bound \(C_{\sss \beta} \e^{\alpha(\pi)+\vep/2}\) by \(n^{\vep/2}\), to complete the proof of Lemma~\ref{lem-almost-sure-upper-bound-early-UA}.
\end{proof}
Finally, it remains to prove Lemma~\ref{lem-almost-sure-upper-bound-late-UA}, for which we use Markov's inequality on the higher moments of \(|\clusterpi{\sss \geq v}{n}|\) for each $v$, followed by a use of the Borel-Cantelli lemma to obtain the required almost sure result. In the process, we need to bound all moments of \(|\clusterpi{\sss \geq v}{n}|\):
\begin{lem}[Tree-graph bounds on component sizes]
\label{lem-higher-moment-bound}
    For any \(k\geq 1\), and \(\beta\in(\alpha(\pi),\tfrac{1}{2}),\) there exists  $C_k=C_k(\beta) < \infty$ such that, for all $v\in[n],$
    \[
        \expec \Big[ \big|\clusterpi{\sss \geq v}{n}\big|^k \Big] \leq C_k \big( \frac{n}{v} \big)^{k\beta}~.
    \]
\end{lem}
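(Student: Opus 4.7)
The plan is to proceed by induction on $k$, combining a tree-graph (BK-type) inequality with the weighted path-sum estimate established in the proof of Proposition~\ref{prop:expec-upper-bound}. The base case $k=1$ follows directly from Proposition~\ref{prop:expec-upper-bound}, using $|\clusterpi{\sss \geq v}{n}|\leq |\clusterpi{v}{n}|$.

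For the inductive step, I would write
\[
\expec\bigl[|\clusterpi{v}{n}|^k\bigr]=\sum_{u_1,\dots,u_k\in[n]}\prob\bigl(v\leftrightarrow u_i\text{ in }G_n^\pi,\ \forall i\bigr)
\]
and bound the joint connection probability by a tree-graph inequality. Since, conditional on the underlying uniform-attachment structure, each potential edge of $G_n^\pi$ is an independent Bernoulli variable, a standard BK-type argument gives
\[
\prob\bigl(v\leftrightarrow u_1,\dots,v\leftrightarrow u_k\bigr)\leq \sum_{T}\prod_{e\in T}p_e,
\]
where $T$ ranges over trees in $[n]$ containing $\{v,u_1,\dots,u_k\}$ (possibly with additional Steiner vertices), and $p_e$ is the probability that edge $e$ is present in $G_n^\pi$.

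I would then evaluate the resulting sum by grouping trees according to their topological shape, expanding each skeleton edge as a self-avoiding path in $[n]$ as in \eqref{base-case:1a}--\eqref{base-case:5}, and iteratively applying the weighted bound
\[
\sum_u \tau(w,u)(n/u)^\gamma\leq C_\gamma(n/w)^\gamma,\qquad \gamma\in(\alpha(\pi),\tfrac{1}{2}),
\]
which is implicit in the proof of Proposition~\ref{prop:expec-upper-bound}. Absorbing leaves first and then Steiner vertices from the outside inward, and using that the number of tree topologies on $k+1$ labeled vertices is bounded by a combinatorial factor depending only on $k$, one obtains the claimed bound $C_k(n/v)^{k\beta}$.

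The main technical obstacle will be ensuring that at each absorption step the accumulated exponent stays inside the admissible range $(\alpha(\pi),\tfrac{1}{2})$, since a naive iteration can push intermediate exponents above $1/2$ at branching points and thereby invalidate the weighted sum estimate. I would resolve this by processing the tree topology in the order dictated by the ``star rooted at $v$'': the dominant contribution in that order is exactly $\chi(v)^k\le C_\beta^k(n/v)^{k\beta}$, while subdominant (more path-like) tree shapes give contributions of the form $C^k(n/v)^{j\beta}$ for $j<k$, which are absorbed into the constant. An equivalent inductive formulation uses the recursive tree-graph bound $\tau_k(v;u_1,\dots,u_k)\le \sum_w \tau(v,w)\tau_{k-1}(w;u_1,\dots,u_{k-1})\tau(w,u_k)$ together with an inductive hypothesis phrased directly in terms of the weighted sum at exponent $\beta$, which avoids ever accumulating exponents beyond $\beta$ during the iteration.
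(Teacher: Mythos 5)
Your high-level strategy — tree-graph inequality to reduce the $k$-th moment to a sum over Steiner trees, then iteratively absorbing branches with the weighted path-sum estimate and inducting on $k$ — is the same as the paper's. But there are two interlocking gaps that would prevent the argument from closing.

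First, in the inductive step you write $\expec[|\clusterpi{v}{n}|^k]=\sum_{u_1,\dots,u_k\in[n]}\prob(v\leftrightarrow u_i\ \forall i)$, working with the ordinary cluster $\clusterpi{v}{n}$ and summing over all of $[n]$. This is not merely sloppy notation: the claimed bound $C_k(n/v)^{k\beta}$ is \emph{false} for $\expec[|\clusterpi{v}{n}|^k]$ once $k\geq 2$. For $v\asymp n$, the event $\{v\in\clusterpi{1}{n}\}$ has probability at least of order $v^{\alpha(\pi)-1}$, and on that event $|\clusterpi{v}{n}|\asymp n^{\alpha(\pi)}$, giving $\expec[|\clusterpi{v}{n}|^k]\gtrsim n^{(k+1)\alpha(\pi)-1}\to\infty$ while $(n/v)^{k\beta}=O(1)$. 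The lemma really is about $\clusterpi{\sss\geq v}{n}$, and the point of that definition is that, on the nonempty event, \emph{every} vertex of the cluster — terminal and Steiner alike — lies in $[v,n]$. You need this monotonicity structurally, not just to shrink the base case. Second, the accumulated-exponent problem you flag is real, but your two proposed fixes do not resolve it. Even with the ``inductive hypothesis at exponent $\beta$,'' after applying the hypothesis plus the two-point estimate you are left with $\sum_w \tau(v,w)(n/w)^{k\beta}$, which is a weighted sum at exponent $k\beta$ — exactly the object you were trying to avoid, and $k\beta$ exceeds $\tfrac12$ for $k\geq 2$ and $\beta$ near $\tfrac12$. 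The paper's resolution is elementary but crucial and uses the $\clusterpi{\sss\geq v}{n}$ restriction: since every Steiner node $z$ satisfies $z\geq v$, one can peel off $(n/z)^{2\beta}\leq (n/v)^{\beta}(n/z)^{\beta}$, exporting one $(n/v)^\beta$ factor and leaving the remaining tree with weight $(n/z)^{\beta}$ — keeping the running exponent pinned at $\beta$. The paper also needs a nontrivial structural lemma about edge-labeled self-avoiding trees (existence of two terminal vertices $u_i,u_j$ whose branches from their last common ancestor are edge-disjoint from the rest) before the resampling/conditional-independence step can be applied; a bare invocation of a ``standard BK-type argument'' does not obviously survive the out-degree-$2$ constraint and the age-dependent edge probabilities of the uniform attachment model.
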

We first prove Lemma~\ref{lem-almost-sure-upper-bound-late-UA} subject to Lemma~\ref{lem-higher-moment-bound}, and then prove Lemma~\ref{lem-higher-moment-bound}:
\begin{proof}[Proof of Lemma~\ref{lem-almost-sure-upper-bound-late-UA} subject to Lemma~\ref{lem-higher-moment-bound}]
    By Markov's inequality, for all \(k\),
    \eqn{\label{for:ASUBL-1}
        \prob\Big( \big|\clusterpi{\sss \geq v}{n}\big| > n^{\alpha(\pi) - \vep} \Big) \leq n^{-k(\alpha(\pi) - \vep)} \expec \Big[ \big|\clusterpi{\sss \geq v}{n}\big|^k \Big]~.
    }
    By Lemma~\ref{lem-higher-moment-bound}, the rhs of \eqref{for:ASUBL-1} can be upper bounded as
    \eqn{\label{for:ASUBL-2}
        \prob\Big( \big|\clusterpi{\sss \geq v}{n}\big| > n^{\alpha(\pi) - \vep} \Big) \leq C_k n^{-k(\alpha(\pi) - \vep)} \big( \frac{n}{v} \big)^{k\beta} = C_k n^{k(\beta-\alpha(\pi) +\vep)} v^{-k\beta}~.
    }
    By the union bound, the maximum component for $v\in [n^a,n]$ can be bounded as
    \eqan{\label{for:ASUBL-3}
        \prob\Big( \max\limits_{n^a\leq v \leq n} \big|\clusterpi{\sss \geq v}{n}\big| > n^{\alpha(\pi) - \vep} \Big) &\leq C_k n^{k(\beta-\alpha(\pi)+\vep)} \sum\limits_{v=n^a}^n v^{-k\beta}\nn\\
        &\leq C_k^\prime n^{k(\beta-\alpha(\pi)+\vep)+a(1-k\beta)}~,
    }
    for some finite constant \(C_k^\prime\). We now choose \(k\) so large that, for  \(\beta=\alpha(\pi)+\vep\), the exponent of \(n\) in the rhs of \eqref{for:ASUBL-3} is less than \(-1\), i.e.,
    \[
        k(\beta-\alpha(\pi)+\vep)+a(1-k\beta)<-1~.
    \]
    Therefore, choosing \(\vep=\vep(a)<a\alpha(\pi)/2,~\beta=\alpha(\pi)+\vep\) and  \(k>\frac{a+1}{a\alpha(\pi) - 2\vep}\), the rhs of \eqref{for:ASUBL-3} is summable. Hence, by the Borel-Cantelli lemma, the almost sure bound holds true for all sufficiently large $n$, completing the proof of Lemma~\ref{lem-almost-sure-upper-bound-late-UA}.
\end{proof}
Lastly, we prove Lemma~\ref{lem-higher-moment-bound}:

\begin{proof}[Proof of Lemma~\ref{lem-higher-moment-bound}]
Our proof is inspired by the {\em tree-graph inequalities} in percolation theory \cite{AizNew84}, which we adapt to this setting. For $j\geq 1$, we let $(j)_k=j(j-1)\cdots (j-k+1)$ be the falling factorial. We then note that, for any non-negative integer-valued random variable,
        \eqn{
        \expec\big[X^k\big]
        \leq k^k+\frac{(k+1)^k}{k\,!}\expec\big[(X-1)_k\big],
        }
since
    \[
    \frac{j^k}{(j-1)_k}=\prod_{i=1}^{k} \frac{j}{j-i}
    \leq \prod_{i=1}^{k} \frac{k+1}{k+1-i}  \leq \frac{(k+1)^k}{k\,!},
    \]
for all $j\geq k+1$. Thus, to bound $\expec\big[X^k\big]$, it suffices to bound $\expec\big[(X-1)_k\big]$.

For any \(v\in[n]\), we rewrite the expectation of \(\big(\big|\clusterpi{\sss \geq v}{n}\big|-1\big)_k\) as
    \eqn{\label{for:lem-higher-moment-1}
        \expec\Big[\big(\big|\clusterpi{\sss \geq v}{n}\big|-1\big)_k\Big] = \expec\Big[ \sum\limits_{(u_1,\ldots,u_k)\subseteq(v,n]^k} \prod\limits_{i\in[k]}\indic{u_i\in\clusterpi{\sss\geq v}{n}} \Big],
    }
where the sum over $(u_1,\ldots,u_k)\subseteq(v,n]$ is over {\em distinct} elements in $(v,n]$. 
\smallskip

    Recall the notion of an edge-labeled path below \eqref{for:s2:UB-2}, where the edge-labels correspond to which edge of the later vertex made the connection to the older vertex. We extend this to {\em edge-labeled trees,} where again each edge carries a label in $\{1,2\}$ indicating which edge made the connection. Further, we call a tree {\em self-avoiding} when every vertex occurs in it at most once.
    \smallskip
    
    For any $k$-tuple \(\mathbf{u}_k := (u_1,\ldots,u_k)\in(v,n]^k\) of distinct indices, the event \(\{ u_1,\ldots,u_k \}\in\clusterpi{\sss\geq v}{n}\) ensures that there exists an edge-labeled self-avoiding tree 
    \(\Tree^v_\ell(\mathbf{u}_k) \subseteq G_n^\pi\) spanned by the vertices \(\{v,u_1,\ldots,u_k\}\), and this tree consists solely of vertices in $(v,n]$. We call \(\{u_1,\ldots,u_k\}\) the {\em terminal vertices} of the tree \(\Tree_\ell(\mathbf{u}_k)\).
    Let us denote by \(\cT^v_\ell(\mathbf{u}_k)\) the set of all such edge-labeled self-avoiding trees. 
    By the union bound, we upper bound the expectation of the \(k^{\rm th}\) falling factorial of the percolated connected component size as
    \eqan{\label{for:lem-higher-moment-2}
        \expec\Big[\big(\big| \clusterpi{\sss \geq v}{n} \big|-1\big)_k\Big] \leq \sum\limits_{\mathbf{u}_k\subseteq(v,n]^k} \,\sum\limits_{\Tree^v_{\ell}(\mathbf{u}_k) \in \cT^v_\ell(\mathbf{u}_k)} \prob \big( \Tree^v_{\ell}(\mathbf{u}_k) \subseteq  G_n^\pi\big).
    }
   To prove the lemma, we use induction on \(k\), where the induction hypothesis is that the upper bound 
    \begin{equation}\label{for:lem-higher-moment-3}
        \sum\limits_{\mathbf{u}_k\subseteq(v,n]^k} \,\sum\limits_{\Tree^v_{\ell}(\mathbf{u}_k) \in \cT^v_\ell(\mathbf{u}_k)} \prob \big( \Tree^v_{\ell}(\mathbf{u}_k) \subseteq  G_n^\pi\big)\, \prod_{i=1}^k \big(\frac{n}{u_i}\big)^{\beta}\leq C_\beta(k) \big(\frac{n}{v}\big)^{\beta k},\qquad\text{for all}~v\in[n],
    \end{equation}
    holds, for some finite constant $C_\beta(k)$, for all $k\geq 1$. Since $n\geq u_i>v$ for all $i\in[k]$, this obviously proves the claim.
    \smallskip
    
    To start the induction, we prove the base case for \(k=1\), for which the trees reduce to {\em paths}. Hence, the lhs of \eqref{for:lem-higher-moment-3} is upper bounded by
    $$
    \sum\limits_{u\in[n]}\sum\limits_{{\bf p}(v,u)}\prob\big( {\bf p}(v,u)\subseteq G_n^\pi \big)\big(\frac{n}{u}\big)^\beta.
    $$
    Therefore, \eqref{base-case:1} proves the base case for the induction with $C_\beta(1) = C_\beta$. Next, we move on to prove the induction step. Let us assume that \eqref{for:lem-higher-moment-3} is true for all \(k<r\), for some $r \ge 2$, and aim to prove it for \(k=\rm r\).
    \smallskip

    We start by proving a property of edge-labeled self-avoiding trees that will be convenient to apply the induction hypothesis. Fix such a tree $\Tree_{\ell}(\mathbf{u}_r) \in \cT_\ell(\mathbf{u}_r)$ with terminal vertices $\{u_1,\ldots,u_r\}$. For $i\neq j\in [r]$, let $z_{ij}$ be the last vertex that the paths from $v$ to $u_i$ and $u_j$ have in common.
    We claim that there exist $i\neq j\in [r]$ such that the edges in the two paths from $z_{ij}$ to $u_i$ and $u_j$ are not used by any path from $v$ to $u_l$ for any $l\in [r]\setminus \{i,j\}$. Note that one of these two paths could be empty if $v,u_i,u_j$ lie on a common path with $v$ at one end. This claim can easily be seen by induction, where the statement is trivial when $r=2$ (since then $[r]\setminus \{i,j\}=\varnothing$). To advance the induction, assume the statement is correct for $r-1$, for some $r \ge 3$, and let us prove it for $r$. By the induction hypothesis, there exist $i\neq j\in [r-1]$ such that the paths from $z_{ij}$ to $u_i$ and $u_j$ are not used by any path from $v$ to $u_l$ for any $l\in [r-1]\setminus \{i,j\}$. Consider the path from $u_r$ to $v$. If this path does not contain $z_{ij},$ then $u_i$ and $u_j$ remain on having the desired property. If this path does contain $z_{ij}$, then it must intersect with the path from $z_{ij}$ to $u_i$, or from $z_{ij}$ to $u_j$, or the paths from $z_{ij}$ to $u_i, u_j$ and $u_r$ are all disjoint, in which case $u_i,u_j$ still have the desired property. Thus, by symmetry, assume that it intersects the path from $z_{ij}$ to $u_i$. Then, obviously, $u_i$ and $u_r$ have the desired property.
    
    \smallskip
    
    For any given coordinate-wise distinct $\mathbf{u}_r \in (v,n]^r$ and $z \in (v,n]$, denote by $r_{ij}(\mathbf{u}_r) := (u_l : l \neq i,j) \in (v,n]^{r-2}$, and recall that $\cT^v_\ell(r_{ij}(\mathbf{u}_r), z)$ denotes the set of edge-labeled self-avoiding trees with terminal vertices $\{\{u_l : l \neq i,j\}, z\}$, which consists only of vertices in $(v,n]$. For any $\Tree^v_\ell(\mathbf{u}_r) \in \cT^v_\ell(\mathbf{u}_r)$, let $u_i$, $u_j$, $z=z_{ij}$ satisfy the above property for $i \neq j \in [r]$. Then we will write $\Tree^v_\ell(r_{ij}(\mathbf{u}_r), z) \sim \Tree^v_\ell(\mathbf{u}_r)$ for some $\Tree_\ell^v(r_{ij}(\mathbf{u}_r), z) \in \cT_\ell^v(r_{ij}(\mathbf{u}_r), z)$ to denote that $\Tree^v_\ell(r_{ij}(\mathbf{u}_r), z)$ is obtained by removing the paths from $z=z_{ij}$ to $u_i$ and $u_j$ in $\Tree^v_\ell(\mathbf{u}_r)$.
    
 With this notation, we can write the sum on the lhs of \eqref{for:lem-higher-moment-3} with $k=r$ as
    \begin{multline}\label{mult:genbd}
     \sum\limits_{\mathbf{u}_r\subseteq(v,n]^r} \,\sum\limits_{\Tree^v_{\ell}(\mathbf{u}_r) \in \cT^v_\ell(\mathbf{u}_r)} \prob \big( \Tree^v_{\ell}(\mathbf{u}_r) \subseteq  G_n^\pi\big)\, \prod_{i=1}^r \big(\frac{n}{u_i}\big)^{\beta}\\
     = \sum\limits_{\mathbf{u}_r\subseteq(v,n]^r}\sum_{z\in (v,n]} \ \sum_{i\neq j\in[r]} \ \sum\limits_{\Tree^v_{\ell}(r_{ij}(\mathbf{u}_r), z) \in \cT^v_{\ell}(r_{ij}(\mathbf{u}_r), z)} \prob \big( \Tree^v_{\ell}(r_{ij}(\mathbf{u}_r), z) \subseteq G_n^\pi \big)\prod_{s\in [r]\setminus \{i,j\}} \big(\frac{n}{u_s}\big)^{\beta} \\
        \times\sum\limits_{\stackrel{\Tree^v_\ell(\mathbf{u}_r) \in \cT^v_\ell(\mathbf{u}_r),}{\Tree^v_\ell(r_{ij}(\mathbf{u}_r), z) \sim \Tree^v_\ell(\mathbf{u}_r)}}
        \prob \big( \Tree^v_\ell(\mathbf{u}_r)\subseteq G_n^\pi\mid \Tree^v_\ell(r_{ij}(\mathbf{u}_r), z)\subseteq G_n^\pi \big)\big(\frac{n}{u_i}\big)^{\beta}\big(\frac{n}{u_j}\big)^{\beta},
     \end{multline}
where the outer sums are implicitly assumed to be over distinct indices in $(v,n]^r$. The index $z$ could either be distinct from the elements in $\{u_i,u_j,v\}$, or $z \in \{u_i,u_j,v\}$.
    Let us denote the contribution to the sum on the rhs of \eqref{mult:genbd} when $z \notin \{u_i,u_j,v\}$ by $S_1$, and the remaining contribution where $z\in \{u_i,u_j,v\}$ by $S_2$.

    We begin by bounding
    \begin{multline}\label{eq:s1bd}
        S_1 := \sum\limits_{\mathbf{u}_r\subseteq(v,n]^r} \sum_{i\neq j\in[r]} \ \sum_{\stackrel{z\in (v,n]}{z \notin \{u_i,u_j,v\}}} \ \sum\limits_{\Tree^v_{\ell}(r_{ij}(\mathbf{u}_r), z) \in \cT^v_{\ell}(r_{ij}(\mathbf{u}_r), z)} \prob \big( \Tree^v_{\ell}(r_{ij}(\mathbf{u}_r), z) \subseteq G_n^\pi \big)\prod_{s\in [r]\setminus \{i,j\}} \big(\frac{n}{u_s}\big)^{\beta} \\
        \times\sum\limits_{\stackrel{\Tree^v_\ell(\mathbf{u}_r) \in \cT^v_\ell(\mathbf{u}_r),}{\Tree^v_\ell(r_{ij}(\mathbf{u}_r), z) \sim \Tree^v_\ell(\mathbf{u}_r)}}
        \prob \big( \Tree^v_\ell(\mathbf{u}_r)\subseteq G_n^\pi\mid \Tree^v_\ell(r_{ij}(\mathbf{u}_r), z)\subseteq G_n^\pi \big)\big(\frac{n}{u_i}\big)^{\beta}\big(\frac{n}{u_j}\big)^{\beta}. 
    \end{multline}
    Let $p_1(z, u_i)$ and $p_2(z, u_j)$ denote the paths from $z$ to $u_i$ and $u_j$.
    Conditionally on \(\{ \Tree^v_\ell(r_{ij}(\mathbf{u}_r), z)\subseteq G_n^\pi \}\), the probability of observing \(\Tree^v_\ell(\mathbf{u}_r)\) in \(G_n^\pi\) is at most the probability of observing the paths $p_1(z, u_i)$ and $p_2(z, u_j)$ in \(G_n^{\pi}\), which avoids edges in \(\Tree^v_\ell(r_{ij}(\mathbf{u}_r),z)\), under the same conditioning event. By independence, which is due to the self-avoiding constraint of all the vertices in the tree $\Tree^v_\ell(\mathbf{u}_r)$,
    \eqan{\label{for:induction-hmb:2}
        &\sum\limits_{u_i, u_j\in(v,n]}\sum\limits_{\stackrel{\Tree^v_\ell(\mathbf{u}_r) \in \cT^v_\ell(\mathbf{u}_r),}{\Tree^v_\ell(r_{ij}(\mathbf{u}_r), z) \sim \Tree^v_\ell(\mathbf{u}_r)}}
        \prob \big( \Tree^v_\ell(\mathbf{u}_r)\subseteq G_n^\pi\mid \Tree^v_\ell(r_{ij}(\mathbf{u}_r), z)\subseteq G_n^\pi \big)\big(\frac{n}{u_i}\big)^{\beta}\big(\frac{n}{u_j}\big)^{\beta}\\
        &\hspace{1cm}\le \sum\limits_{u_i, u_j\in(v,n]} \sum\limits_{p_1(z, u_i)}\prob \big( p_1(z, u_i)\subseteq G_n^{\pi}\big)\big(\frac{n}{u_i}\big)^{\beta}\sum\limits_{p_2(z, u_j)} \prob \big( p_2(z, u_j)\subseteq G_n^{\pi}\big)\big(\frac{n}{u_j}\big)^{\beta}\nn.
    }
    Indeed, conditional on \(\{ \Tree^v_\ell(r_{ij}(\mathbf{u}_r), z)\subseteq G_n^\pi \}\), one may simply independently `resample' the edges used in \(\Tree^v_\ell(r_{ij}(\mathbf{u}_r),z)\) and retain the edges not used in this tree, to obtain a graph independent of the conditioning event. This graph has the same law as $G_n^\pi$ and the existence of the path $p_1(z, u_i)$ in $G_n^\pi$ which avoids edges in \(\Tree^v_\ell(r_{ij}(\mathbf{u}_r),z)\) implies the existence of the same path in the new graph. This gives the above upper bound.
    
    By \eqref{base-case:1}, setting $C_\beta(1)= C_\beta$, for any  starting point \(z\) in \((v,n]\), the rhs of \eqref{for:induction-hmb:2} can be upper bounded by
    \eqan{\label{for:induction-hmb:5}
     C_{\beta}(1)^2\big( \frac{n}{z} \big)^{2\beta}\leq C_{\beta}(1)^2\big( \frac{n}{v} \big)^{\beta}\big( \frac{n}{z} \big)^{\beta},
    }
    where we have used that $z\geq v$, and we note that the factor $(n/z)^{\beta}$ produces the missing vertex factor corresponding to $z$ for the tree $\Tree^v_\ell(r_{ij}(\mathbf{u}_r),z)$ in the first line of \eqref{eq:s1bd}.
    \smallskip

    Substituting the upper bounds thus obtained in \eqref{for:induction-hmb:2} and \eqref{for:induction-hmb:5} into \eqref{eq:s1bd}, we arrive at
    \eqan{
        S_1&\leq C_\beta(1)^2 \big( \frac{n}{v} \big)^\beta \sum\limits_{\widetilde{\mathbf{u}}_{r-1}\subseteq(v,n]^{r-1}} \sum_{i\neq j\in[r]} \ \sum\limits_{\Tree^v_{\ell}(\widetilde{\mathbf{u}}_{r-1}) \in \cT^v_{\ell}(\widetilde{\mathbf{u}}_{r-1})} \prob \big( \Tree^v_{\ell}(\widetilde{\mathbf{u}}_{r-1}) \subseteq G_n^\pi \big)\prod_{s=1}^{r-1} \big(\frac{n}{\widetilde{u}_s}\big)^{\beta},
        \nn
    }
    where we combined the role of $(r_{ij}(\mathbf{u}_r), z)$ into $\widetilde{\mathbf{u}}_{r-1} \subseteq (v,n]^{r-1}$.
    Hence, by the induction hypothesis, the above yields
    \eqan{\label{eq:s1final}
         S_1\leq C_\beta(1)^2 r(r-1) C_\beta(r-1)\big(\frac{n}{v}\big)^{{\rm r}\beta},
    }
    where the factor $r(r-1)$ arises from the possible choices of $i\neq j\in[r].$
    \smallskip

    We continue with the contribution $S_2$ where $z\in \{u_i,u_j,v\}$, which is similar and arguably simpler. When $z=v$, the paths from $z$ to $u_i$ and $u_j$ are disjoint from the edge-labeled self-avoiding tree with terminal vertices $\{u_l : l \neq i,j\}$, so that this contribution, which we denote by $S_{21},$ can be bounded by
        \eqan{
        \label{S1-bd}
        S_{21}
        &\leq \sum_{i\neq j\in[r]} \ \sum\limits_{\widetilde{\mathbf{u}}_{r-2}\subseteq(v,n]^{r-2}} \ \sum\limits_{\Tree^v_{\ell}(\widetilde{\mathbf{u}}_{r-2}) \in \cT^v_{\ell}(\widetilde{\mathbf{u}}_{r-2})} \prob \big( \Tree^v_{\ell}(\widetilde{\mathbf{u}}_{r-2}) \subseteq G_n^\pi \big)\prod_{s=1}^{r-2} \big(\frac{n}{\widetilde{u}_s}\big)^{\beta}\\
        &\qquad \times
        \sum\limits_{u_i, u_j\in(v,n]} \sum\limits_{p_1(v, u_i)}\sum\limits_{p_1(v, u_j)}\prob \big( p_1(v, u_i)\subseteq G_n^{\pi}\big)\big(\frac{n}{u_i}\big)^{\beta}\sum\limits_{p_2(v, u_j)} \prob \big( p_2(v, u_j)\subseteq G_n^{\pi}\big)\big(\frac{n}{u_j}\big)^{\beta},\nn
        }
 whose contribution can be bounded, by the induction hypothesis, by
    \eqn{
    \label{S21-bd}
    S_{21}\leq C_\beta(1)^2 r(r-1) C_\beta(r-2)\big(\frac{n}{v}\big)^{{\rm r}\beta}.
    }
    
For the contribution to $S_2$ where $z\in \{u_i,u_j\},$ which we denote by $S_{22},$ by symmetry and without loss of generality, we may assume that $z=u_i$. Then, the path from $u_i$ to $u_j$, denote by $p(u_i, u_j)$, is disjoint from the self-avoiding edge-labeled tree $\Tree_{\ell}(r_j(\mathbf{u}_r))$ with terminal vertices comprised of the indices in $r_j(\mathbf{u}_r) := (u_l)_{l\in [r]\setminus \{j\}}$. Then
        \eqan{
        \label{S22-bd}
        S_{22}
        &\leq 2\sum\limits_{\widetilde{\mathbf{u}}_{r-1}\subseteq(v,n]^{r-1}} \ \sum_{ i \neq j \in[r]} \ \sum\limits_{\Tree^v_{\ell}(\widetilde{\mathbf{u}}_{r-1}) \in \cT^v_{\ell}(\widetilde{\mathbf{u}}_{r-1})} \prob \big( \Tree^v_{\ell}(\widetilde{\mathbf{u}}_{r-1}) \subseteq G_n^\pi \big)\prod_{s=1}^{r-1} \big(\frac{n}{\widetilde{u}_s}\big)^{\beta}\\
        &\qquad \times
        \sum\limits_{u_j\in(v,n]} \sum\limits_{p(u_i, u_j)}\prob \big( p(u_i, u_j)\subseteq G_n^{\pi}\big)\big(\frac{n}{u_j}\big)^{\beta}.\nn
        }
By \eqref{base-case:1}, for fixed $u_i$, the final sum is bounded by
$C_{\beta}(1)(n/u_i)^{\beta}\leq C_{\beta}(1)(n/v)^{\beta},$
while, by the induction hypothesis, the first line is bounded by $r(r-1) C_\beta(r-1)(n/v)^{{\rm (r-1)}\beta}$.
\smallskip

In total, by \eqref{eq:s1final}, \eqref{S1-bd}, \eqref{S21-bd} and \eqref{S22-bd}, the lhs of \eqref{mult:genbd} is bounded by
    \eqan{
    S_1+S_2&=S_1+S_{21}+S_{22}\\
    &\leq C_\beta(1)^2 r(r-1) C_\beta(r-1)\big(\frac{n}{v}\big)^{{\rm r}\beta}+C_\beta(1)^2 r(r-1) C_\beta(r-2)\big(\frac{n}{v}\big)^{{\rm r}\beta}\nn\\
    &\qquad +2C_{\beta}(1)r(r-1)C_\beta(r-1)\big(\frac{n}{v} \big)^{{\rm r}\beta},\nn\\
    &= C_\beta(r)\big(\frac{n}{v} \big)^{{\rm r}\beta},\nn
    }
when $C_\beta(r)=C_\beta(1)r(r-1) [C_\beta(1)C_\beta(r-1)+C_\beta(1)C_\beta(r-2)
+2C_\beta(r-1)].$
\smallskip

This completes the proof of the induction hypothesis, and thus of Lemma~\ref{lem-higher-moment-bound}.
\end{proof}


\section{Convergence of the second susceptibility and stochastic approximation}
\label{sec-conv-second-susc-stoch-approx}
In this section, we analyze the asymptotics of the second susceptibility and its truncated versions.  This culminates in the proof of Theorem \ref{thm-suscep-comp}. We also obtain a rate of convergence in Proposition \ref{prop-conrates2}, which will be a crucial tool in the subsequent sections. In Section \ref{sec-size-truncated-susceptibilities}, we analyze size-truncated susceptibilities. In Section \ref{sec-convergence-probability-susceptibitlity}, we start by proving convergence in probability of the susceptibility. We strengthen this to convergence almost surely in Section \ref{sec-convergence-as-susceptibitlity}, which is the main technical achievement of this section.

\subsection{Analysis of size-truncated second susceptibility}
\label{sec-size-truncated-susceptibilities}

For $\ell \geq 1$, let  $X_\ell^\pi(n)$ denote the number of components of size $\ell$ in $G^{\pi}_n$. Let $x_{\ell}^\pi(n) = X_{\ell}^\pi(n)/n$ denote the proportion of size $\ell$ components. For fixed $L\geq 1$, define the $L^{\rm th}$ truncated second susceptibility via 
\begin{equation}
    \label{eqn:truncated-sus}
    \susceptibilitypi{n}{\sss 2,L} = \frac{1}{n} \sum_{v=1}^{n}|\clusterpi{v}{n}|\ind\set{|\clusterpi{v}{n}|\leq L} = \frac{1}{n} \sum_{\ell = 1}^{L} \ell^2 x_{\ell}^\pi(n). 
\end{equation}
The following proposition describes the almost-sure convergence of these quantities, where we recall the size of the local limit of the connected component of a uniform vertex $|\clusterpi{\sss\varnothing}{\infty}|$ described in Proposition \ref{prop:expec-s2-convg}:

\begin{prop}[Convergence of size-truncated susceptibility]
\label{prop:trunc-suscep-as}
    For every fixed $L$, 
    \begin{equation}
    \label{eqn:6407}
    \susceptibilitypi{n}{\sss 2,L} \convas \susceptibilitypi{\infty}{\sss 2,L} := \E\left[|\clusterpi{\sss\varnothing}{\infty}|\ind\set{|\clusterpi{\sss\varnothing}{\infty}|\leq L}\right].\end{equation}  
    Further, with $\susceptibilitypi{\infty}{2}$ as in \eqref{eqn:s2-def},  
    \begin{equation}
    \label{eqn:sum-assert}
    \lim_{L \to \infty}  \E\left[|\clusterpi{\sss\varnothing}{\infty}|\ind\set{|\clusterpi{\sss\varnothing}{\infty}|\leq L}\right] = \susceptibilitypi{\infty}{2}.
    \end{equation}
\end{prop}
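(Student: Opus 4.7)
The plan is to handle the two assertions separately. For \eqref{eqn:6407}, I would combine the local weak convergence in probability from Proposition~\ref{prop:expec-s2-convg}(a) (to identify the limit of $\E[\susceptibilitypi{n}{\sss 2,L}]$) with a martingale concentration argument in continuous time (to upgrade to almost-sure convergence). The limit \eqref{eqn:sum-assert} then reduces to a one-line application of monotone convergence together with Proposition~\ref{prop:expec-s2-convg}(b).

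For the expectation, letting $o_n$ be a uniformly chosen vertex in $[n]$, one has
\[
\E\bigl[\susceptibilitypi{n}{\sss 2,L}\bigr] = \E\bigl[|\clusterpi{o_n}{n}|\ind\{|\clusterpi{o_n}{n}|\leq L\}\bigr] = \sum_{\ell=1}^L \ell\, \pr(|\clusterpi{o_n}{n}| = \ell).
\]
For each fixed $\ell\leq L$, the event $\{|\clusterpi{o_n}{n}|=\ell\}$ is determined by a bounded neighborhood of $o_n$ in $G_n^\pi$ (on this event the whole component of $o_n$ has only $\ell$ vertices), hence is a local event. Proposition~\ref{prop:expec-s2-convg}(a) then gives $\pr(|\clusterpi{o_n}{n}|=\ell)\to \pr(|\clusterpi{\sss\varnothing}{\infty}|=\ell)$, and summing over the finite range $\ell=1,\ldots,L$ yields $\E[\susceptibilitypi{n}{\sss 2,L}]\to \susceptibilitypi{\infty}{\sss 2,L}$.

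To upgrade to almost-sure convergence I would pass to the continuous-time embedding and set $M(t) := N(t)\,\csusceptibilitypi{t}{\sss 2,L}$. A short case analysis on the arrival of a new vertex (by the number of percolated edges, whether they land in the same component, and whether the merged component has size $\leq L$) shows that the jumps of $M$ are almost surely bounded by $C_L := 2(L+1)^2$: under the truncation, a merger of components of sizes $a,b$ adds at most $(L+1)^2$ of new contribution (when $a+b+1\leq L$) or removes at most $2L^2$ of old contribution (when $a+b+1>L$). The Doob--Meyer decomposition $M(t) = M(0) + A(t) + \cM(t)$, with predictable compensator $A(\cdot)$ and pure-jump martingale $\cM(\cdot)$, yields $\langle \cM\rangle(t) \leq C_L^2 N(t)$, from which the strong law for square-integrable martingales gives $\cM(t)/N(t) \convas 0$. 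The compensator admits the representation $A(t) = \int_0^t \Psi(G^\pi_s)\,ds$ for a bounded local functional $\Psi$ of the component structure at time $s$, and combining the in-probability local weak convergence of $G^\pi_s$ with the exponential growth of $N(t)$ forces $A(t)/N(t) \convas \susceptibilitypi{\infty}{\sss 2,L}$, which finishes \eqref{eqn:6407}.

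Finally, for \eqref{eqn:sum-assert}, $L\mapsto |\clusterpi{\sss\varnothing}{\infty}|\ind\{|\clusterpi{\sss\varnothing}{\infty}|\leq L\}$ is non-negative and monotonically increases to $|\clusterpi{\sss\varnothing}{\infty}|$ as $L\to\infty$, so monotone convergence gives
\[
\lim_{L\to\infty}\susceptibilitypi{\infty}{\sss 2,L} = \E[|\clusterpi{\sss\varnothing}{\infty}|] = \susceptibilitypi{\infty}{2},
\]
where the last equality is Proposition~\ref{prop:expec-s2-convg}(b) and the hypothesis $\pi<\pi_c$ is used to guarantee finiteness of the limit. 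The main technical hurdle is the compensator step: the bounded-jumps estimate disposes of $\cM(t)$ routinely, but showing $A(t)/N(t)\convas \susceptibilitypi{\infty}{\sss 2,L}$ requires upgrading the in-probability local weak convergence of $G^\pi_s$ to an almost-sure statement for the time-average of a bounded local functional, which I expect to be the most delicate piece of the argument.
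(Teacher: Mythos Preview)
Your treatment of the expectation limit and of \eqref{eqn:sum-assert} is correct and matches the paper: local weak convergence identifies $\E[\susceptibilitypi{n}{\sss 2,L}]\to \susceptibilitypi{\infty}{\sss 2,L}$, and monotone convergence plus Proposition~\ref{prop:expec-s2-convg}(b) handles the $L\to\infty$ limit.

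The gap is in your almost-sure step. Your Doob--Meyer decomposition correctly gives $\csusceptibilitypi{t}{\sss 2,L} - A(t)/N(t)\convas 0$, so the problem reduces to showing $A(t)/N(t)\convas \susceptibilitypi{\infty}{\sss 2,L}$. But the compensator density $\Psi(G^\pi_s)$ is itself a bounded functional of the component-size proportions $x_\ell^\pi(s)=X_\ell^\pi(s)/N(s)$ (for various $\ell$, not only $\ell\le L$, since large components can absorb small ones). Because $N(s)/N(t)$ concentrates near $s=t$, almost-sure convergence of the weighted time average $A(t)/N(t)$ requires almost-sure convergence of these $x_\ell^\pi(s)$, which is exactly what you are trying to establish. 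Your final paragraph concedes this circularity without resolving it; in-probability local weak convergence together with exponential growth of $N(t)$ is not enough to force an almost-sure limit for this exponentially weighted average.

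The paper sidesteps the issue entirely by a bounded-differences argument in discrete time. One writes $X_\ell^\pi(n)=g_{n;\ell}(\fU_2,\ldots,\fU_n)$ as a deterministic function of the independent attachment variables $\fU_i=(\fB_i,\fV_i^{(1)},\fV_i^{(2)})$, observes that perturbing a single $\fU_i$ adds or removes at most two edges and hence changes $X_\ell^\pi(n)$ by a constant independent of $n$, and applies Azuma--Hoeffding to get $|X_\ell^\pi(n)/n-\E[X_\ell^\pi(n)/n]|\convas 0$ for each fixed $\ell\le L$. Summing then gives $|\susceptibilitypi{n}{\sss 2,L}-\E[\susceptibilitypi{n}{\sss 2,L}]|\convas 0$, which combined with the expectation limit yields \eqref{eqn:6407} directly. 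This avoids any recursive dependence on the graph dynamics and is the missing ingredient in your approach.
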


\begin{proof}
We start by introducing some necessary notation. Let $\dU = (\fU_i)_{i\geq 2}$ be an independent collection of random variables. Here, each variable $\fU_i$ is again made up of three independent components $\fU_i = (\fB_i, \fV_i^{\sss(1)}, \fV_i^{\sss(2)})$ with $\fB_i\sim \Bin(2, \pi)$ and, for $j=1,2$, $\fV_i^{\sss(j)}$ are discrete uniform random variables on the index set $[i-1]:=\set{1,2,\ldots, i-1}$. 

For later use, let $\dU^\prime = (\fU_i^\prime)_{i\geq 2}$ be an independent copy of $\dU$.
 \smallskip
 
Fix $\ell \geq 1$ and $n\geq 2$, and note that the number of components of size $\ell$ at time $n$ can be constructed as a deterministic function of the first $n-1$ random variables of $\dU$, namely $X_{\ell}^\pi(n) = g_{n;\ell}(\fU_2, \ldots, \fU_n)$ for an appropriate choice of $g_{n;\ell}$. In addition, note that a perturbation of any single coordinate $\fU_i$ of this function can be viewed as adding or deleting at most two edges used in the formation of the graph up to time $n$. This can at most create or destroy at most two components of size $\ell$, and, hence, 
 \[\big|g_{n;\ell}(\fU_2, \ldots, \fU_{i-1},\fU_{i},\fU_{i+1}, \ldots, \fU_{n} ) -g_{n;\ell}(\fU_2, \ldots, \fU_{i-1},\fU_{i}^\prime,\fU_{i+1}, \ldots, \fU_{n} ) \big|\leq 2. \]
Thus, by the Azuma-Hoeffding inequality, for any fixed $\ell \leq L$, the density of components and the truncated susceptibility satisfy
\begin{equation}
    \label{eqn:442}
    \bigg|\frac{X_{\ell}^\pi(n)}{n} - \E\Big[\frac{X_{\ell}^\pi(n)}{n}\Big]\bigg|\convas 0, \qquad \text{so that} \qquad  |\susceptibilitypi{n}{\sss 2,L}- \E[\susceptibilitypi{n}{\sss 2,L}]|\convas 0. 
\end{equation}
 Local convergence of the component of a randomly selected vertex in Proposition \ref{prop:expec-s2-convg} implies that 
\begin{equation}
    \label{eqn:6357}
    \E[\susceptibilitypi{n}{\sss 2,L})] \to \E\left[|\clusterpi{\sss\varnothing}{\infty}|\ind\set{|\clusterpi{\sss\varnothing}{\infty}|\leq L}\right], \qquad \text{as } n\to\infty.
\end{equation}
Combining \eqref{eqn:442} and \eqref{eqn:6357} proves \eqref{eqn:6407}. To complete the proof of Proposition \ref{prop:trunc-suscep-as},  using the monotone convergence theorem we have $\E\left[|\clusterpi{\sss\varnothing}{\infty}|\ind\set{|\clusterpi{\sss\varnothing}{\infty}|\leq L}\right] \nearrow \E\left[|\clusterpi{\sss\varnothing}{\infty}|\right]$
as $L\nearrow \infty$. Proposition \ref{prop:expec-s2-convg}(b) implies that $\E\left[|\clusterpi{\sss\varnothing}{\infty}|\right] = \susceptibilitypi{\infty}{2} $.
\end{proof}

\subsection{Convergence in probability of the second susceptibility}
\label{sec-convergence-probability-susceptibitlity}
In this section, we start by showing convergence in probability of the second susceptibility, which we strengthen to almost sure convergence in the next section:
\begin{prop}[Convergence in probability of the second susceptibility]
\label{prop:s2-prob-convg}
    The second susceptibility satisfies  $\susceptibilitypi{n}{2} \convp \susceptibilitypi{\infty}{2}$, where $\susceptibilitypi{\infty}{2}$ is defined as in \eqref{eqn:s2-def}. 
\end{prop}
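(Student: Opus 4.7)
The plan is to deduce convergence in probability from the already established almost sure convergence of the \emph{truncated} susceptibilities in Proposition~\ref{prop:trunc-suscep-as}, together with the convergence of expectations in Proposition~\ref{prop:expec-s2-convg}(c), via a uniform integrability-style truncation argument. The two ingredients are perfectly matched: the truncated version is well-behaved pointwise, and the tail of the truncation is small in $L^1$ uniformly in $n$ thanks to $\E[\susceptibilitypi{n}{2}]\to\susceptibilitypi{\infty}{2}$.

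More concretely, define the residual
\[
R_n(L) \;:=\; \susceptibilitypi{n}{2}-\susceptibilitypi{n}{\sss 2,L} \;=\; \frac{1}{n}\sum_{v\in[n]} |\clusterpi{v}{n}|\,\ind\{|\clusterpi{v}{n}|>L\} \;\geq 0.
\]
Taking expectations and using Proposition~\ref{prop:expec-s2-convg}(c) together with \eqref{eqn:6357} in the proof of Proposition~\ref{prop:trunc-suscep-as},
\[
\limsup_{n\to\infty} \E[R_n(L)] \;=\; \susceptibilitypi{\infty}{2} \;-\; \susceptibilitypi{\infty}{\sss 2,L},
\]
which tends to $0$ as $L\to\infty$ by \eqref{eqn:sum-assert}. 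Since $R_n(L)\geq 0$, Markov's inequality yields, for any $\eta>0$,
\[
\limsup_{n\to\infty} \pr\big(R_n(L)>\eta\big) \;\leq\; \eta^{-1}\big(\susceptibilitypi{\infty}{2}-\susceptibilitypi{\infty}{\sss 2,L}\big) \;\longrightarrow\; 0 \quad\text{as } L\to\infty.
\]

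Now fix $\eta>0$ and apply a triangle decomposition,
\[
\big|\susceptibilitypi{n}{2}-\susceptibilitypi{\infty}{2}\big| \;\leq\; R_n(L) \;+\; \big|\susceptibilitypi{n}{\sss 2,L}-\susceptibilitypi{\infty}{\sss 2,L}\big| \;+\; \big|\susceptibilitypi{\infty}{\sss 2,L}-\susceptibilitypi{\infty}{2}\big|.
\]
Choose $L=L(\eta)$ large enough that the last (deterministic) term is below $\eta$ and that $\limsup_n \pr(R_n(L)>\eta)<\eta$. For this fixed $L$, Proposition~\ref{prop:trunc-suscep-as} gives $\susceptibilitypi{n}{\sss 2,L}\to\susceptibilitypi{\infty}{\sss 2,L}$ almost surely, so the middle term is below $\eta$ eventually almost surely, hence with probability tending to one. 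A union bound yields
\[
\limsup_{n\to\infty} \pr\big(|\susceptibilitypi{n}{2}-\susceptibilitypi{\infty}{2}|>3\eta\big) \;\leq\; \eta.
\]
Since $\eta>0$ was arbitrary, this proves $\susceptibilitypi{n}{2}\convp\susceptibilitypi{\infty}{2}$.

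There is no serious obstacle here; the entire argument rests on the two nontrivial inputs already established, namely (i) convergence of $\E[\susceptibilitypi{n}{2}]$ to $\susceptibilitypi{\infty}{2}$, proved via the local-limit / path-counting upper bound in Proposition~\ref{prop:expec-s2-convg}(c), and (ii) the pointwise almost sure convergence of each truncated susceptibility via the Azuma--Hoeffding argument in Proposition~\ref{prop:trunc-suscep-as}. The only subtlety is ensuring that the residual $R_n(L)$ is controlled uniformly in $n$; this is precisely what the $L^1$-convergence of $\susceptibilitypi{n}{2}$ delivers, which is why part (c) of Proposition~\ref{prop:expec-s2-convg} (rather than merely the local weak convergence in probability) is the critical input.
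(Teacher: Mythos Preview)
Your proof is correct and follows essentially the same approach as the paper's: both combine the almost sure convergence of the truncated susceptibilities (Proposition~\ref{prop:trunc-suscep-as}) with the convergence of expectations $\E[\susceptibilitypi{n}{2}]\to\susceptibilitypi{\infty}{2}$ (Proposition~\ref{prop:expec-s2-convg}(c)) via a triangle decomposition and Markov's inequality on the nonnegative residual $\susceptibilitypi{n}{2}-\susceptibilitypi{n}{\sss 2,L}$. The only cosmetic difference is in the bookkeeping of the $\eta$'s and the order in which $L$ is sent to infinity.
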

\begin{proof}
    By Proposition~\ref{prop:trunc-suscep-as}, for all \(L\in\mathbb{N}\), 
    \begin{equation}
        \label{eqn:828}
        \susceptibilitypi{n}{ 2} \geq \susceptibilitypi{n}{\sss 2,L} \convas \susceptibilitypi{\infty}{\sss 2,L}\uparrow \expec\big[|\clusterpi{\sss\varnothing}{\infty}|\big]=\susceptibilitypi{\infty}{2},
    \end{equation}
    where the last equality follows from Proposition~\ref{prop:trunc-suscep-as}. Moreover, as $\susceptibilitypi{\cdot}{\sss 2,L}$ is bounded by $L^2$, the convergence of $\susceptibilitypi{n}{\sss 2,L}$ also takes place in $\bL^1$. Thus, for any $\vep> 0$, 
    \begin{align*}
        \prob \big( \big| \susceptibilitypi{n}{2} - \susceptibilitypi{\infty}{2} \big| \geq \vep \big) &\leq \prob \big( \big| \susceptibilitypi{n}{2} - \susceptibilitypi{n}{\sss 2,L}\big| \geq \vep/2 \big) + \prob\big(\big|\susceptibilitypi{n}{\sss 2,L} - \susceptibilitypi{\infty}{2} \big| > \vep/2 \big)\\
        &\leq \frac{2}{\vep}\expec \big[ \susceptibilitypi{n}{2} - \susceptibilitypi{n}{\sss 2,L} \big] + \frac{2}{\vep}\expec \big[ \big| \susceptibilitypi{n}{\sss 2,L} - \susceptibilitypi{\infty}{\sss 2,L} \big|\big] + \frac{2}{\vep}\big|\susceptibilitypi{\infty}{\sss 2,L} - \susceptibilitypi{\infty}{2}\big|. 
    \end{align*}
    By \eqref{eqn:828}, the middle term vanishes as $n\rightarrow \infty$. By Proposition~\ref{prop:expec-s2-convg}(c), $\lim_{n \to \infty}\expec\big[\susceptibilitypi{n}{2}\big] =  \susceptibilitypi{\infty}{2}$. Using this along with \eqref{eqn:828} gives
    \begin{align*}
        \lim_{n \to \infty}\expec \big[ \susceptibilitypi{n}{2} - \susceptibilitypi{n}{\sss 2,L} \big] = \susceptibilitypi{\infty}{2} - \susceptibilitypi{\infty}{\sss 2,L}.
    \end{align*}
    Thus, for any $L$, 
    \[\limsup_{n\to \infty} \prob \big( \big|\susceptibilitypi{n}{2} - \susceptibilitypi{\infty}{2}\big| \geq \vep \big) \leq \frac{4}{\vep} \big[ \susceptibilitypi{\infty}{2} - \susceptibilitypi{\infty}{\sss 2,L} \big]. \]
    Now letting $L\uparrow \infty$ and again using \eqref{eqn:828} completes the proof. 
\end{proof}

\subsection{Almost sure convergence of the second susceptibility}
\label{sec-convergence-as-susceptibitlity}
    We continue with the proof of Theorem \ref{thm-suscep-comp}. We prove this result by using Proposition \ref{prop:s2-prob-convg} in conjunction with a stochastic approximation argument applied to the evolution of $\susceptibilitypi{\cdot}{2}$, combined with the almost sure weak upper bound on the maximal component size in Proposition \ref{prop:max-as-up-bound}. Let us start with the stochastic approximation scheme. 
    
    As before, we write  $(G^{\pi}_n)_{n\geq 1}$ for the natural filtration of the process. The next lemma makes the stochastic approximation explicit:

    \begin{lem}[Stochastic approximation for the second susceptibility]
    \label{lem:s2-evol}
        The evolution of the process $(\susceptibilitypi{n}{2})_{n\geq 1}$ can be written as 
        \begin{equation}\label{SAscheme}
            \susceptibilitypi{n+1}{2} - \susceptibilitypi{n}{2} = \frac{1}{n+1}\left[F(\susceptibilitypi{n}{2}) + R_n + \xi_{n+1} \right], \qquad n\geq 1,
        \end{equation}
        with the following description of the terms on the right-hand side:
        \begin{enumeratea}
            \item $F(s) = 2\pi^2 s^2+ (4\pi-1)s+1$, and the error term $R_n$ satisfies $|R_n| \leq K \susceptibilitypi{n}{4}/n, \, n \ge 1,$ for a positive constant $K$.
            \item The martingale differences  $(\xi_{n+1})_{n\geq 1}$ satisfy $\E[\xi_{n+1}^2\,|\,G^{\pi}_n] \leq K (\susceptibilitypi{n}{3})^2$. 
        \end{enumeratea}
        Here, writing $\Delta \susceptibilitypi{n}{2} = \susceptibilitypi{n+1}{2} - \susceptibilitypi{n}{2}$, the above objects have the interpretation 
        \[  
        (n+1)\E[\Delta \susceptibilitypi{n}{2} \,| \,G^{\pi}_n]= F(s_2(n)) + R_n, \qquad \xi_{n+1} = (n+1)\big(\Delta \susceptibilitypi{n}{2} - \E[\Delta \susceptibilitypi{n}{2}\,|\,G^{\pi}_n]\big).
        \]
    \end{lem}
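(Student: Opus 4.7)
The approach is direct conditional-expectation bookkeeping based on the dynamic construction of the percolated uniform attachment model. Write $S_k(n):=\sum_{\clusterold\in \vC(n)} |\clusterold|^k = n\,\susceptibilitypi{n}{k}$, so in particular $S_1(n)=n$, and observe that since every component has size at least one, $s_k(n) \leq s_{k+1}(n)$ for all $k\geq 1$ (in particular $s_3(n)\geq s_1(n)=1$). I will compute $\E[\Delta S_2\mid G_n^\pi]$, extract the polynomial part $F$ and a remainder $R_n$, and then use the same case analysis to bound $\E[\xi_{n+1}^2\mid G_n^\pi]\leq \E[(\Delta S_2)^2\mid G_n^\pi]$.

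First, decompose the addition of vertex $n+1$ by the state of its two ${\rm Ber}(\pi)$ edges. If both are absent (probability $(1-\pi)^2$), then $\Delta S_2 = 1$. If exactly one is present (probability $2\pi(1-\pi)$), it attaches to component $\clusterold$ with probability $|\clusterold|/n$, yielding $\Delta S_2 = 2|\clusterold|+1$. If both are present (probability $\pi^2$), two components $\clusterold_1,\clusterold_2$ are chosen independently with probability $|\clusterold_j|/n$ each; if $\clusterold_1=\clusterold_2=\clusterold$, then $\Delta S_2 = 2|\clusterold|+1$, and if distinct, $\Delta S_2 = 2|\clusterold_1||\clusterold_2|+2|\clusterold_1|+2|\clusterold_2|+1$. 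Summing against these probabilities, using $\sum_{\clusterold_1\neq \clusterold_2}=\sum_{\clusterold_1,\clusterold_2}-\sum_{\clusterold_1=\clusterold_2}$, and substituting $T_k=n s_k$ together with $T_1=n$, I expect the simplifications to yield
\[
\E[\Delta S_2\mid G_n^\pi] \;=\; 1 + 4\pi\, s_2(n) + 2\pi^2 s_2(n)^2 - \frac{2\pi^2(s_3(n)+s_4(n))}{n}.
\]
Since $s_2(n+1)-s_2(n) = \tfrac{1}{n+1}\bigl(\Delta S_2 - s_2(n)\bigr)$, taking conditional expectation gives \eqref{SAscheme} with $F(s)=2\pi^2 s^2+(4\pi-1)s+1$ and $R_n = -2\pi^2(s_3(n)+s_4(n))/n$. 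Using $s_3\leq s_4$, this produces $|R_n|\leq 4\pi^2 s_4(n)/n$, so part~(a) holds with $K=4\pi^2$.

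For part~(b), $\xi_{n+1}=\Delta S_2 - \E[\Delta S_2\mid G_n^\pi]$ gives $\E[\xi_{n+1}^2\mid G_n^\pi]\leq \E[(\Delta S_2)^2\mid G_n^\pi]$. Using the crude bounds $2|\clusterold|+1\leq 3|\clusterold|$ and $2|\clusterold_1||\clusterold_2|+2|\clusterold_1|+2|\clusterold_2|+1\leq 7|\clusterold_1||\clusterold_2|$ (valid because $|\clusterold|\geq 1$), the same case decomposition shows that the dominant contribution comes from the two-component-merger case and is controlled by a multiple of $\sum_{\clusterold_1,\clusterold_2}|\clusterold_1|^3|\clusterold_2|^3/n^2 = s_3(n)^2$, while the remaining contributions are $O(s_3(n))$. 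Since $s_3(n)\geq 1$, these lower-order terms are absorbed by $s_3(n)^2$, yielding $\E[\xi_{n+1}^2\mid G_n^\pi]\leq K s_3(n)^2$ for a suitable $K$.

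The computation is elementary; the only mildly delicate point is combinatorial bookkeeping in separating diagonal from off-diagonal contributions in the two-edge case and ensuring the cross terms assemble cleanly into $F(s_2)$ plus a remainder that is a genuine lower-order correction. The monotonicity $s_k\leq s_{k+1}$, valid because component sizes are at least one, is precisely what enables the remainder and variance bounds to be stated in the clean form claimed in the lemma.
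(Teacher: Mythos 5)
Your proposal is correct and follows essentially the same route as the paper: you decompose the increment of the unnormalized sum of squares $S_2(n)=n\,s_2(n)$ according to the Binomial$(2,\pi)$ number of retained edges of the arriving vertex, separate the two-edge contribution into diagonal (same component chosen twice) and off-diagonal (two distinct components merging) cases, and recover exactly the paper's $F(s)=2\pi^2 s^2+(4\pi-1)s+1$ and $R_n=-\tfrac{2\pi^2}{n}(s_3(n)+s_4(n))$; the variance bound via the crude pointwise bounds $2|\clusterold|+1\le 3|\clusterold|$ and the analogous $7|\clusterold_1||\clusterold_2|$ likewise matches the paper's estimate, using $s_4(n)/n\le s_3(n)\le s_3(n)^2$ and $s_3(n)\ge 1$ to absorb the lower-order terms. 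The only cosmetic difference is that the paper writes the conditional expectation of $\Delta s_2(n)$ directly while you pass through $\Delta S_2$ and the identity $s_2(n+1)-s_2(n)=\tfrac{1}{n+1}(\Delta S_2-s_2(n))$, which is an equivalent bookkeeping choice.
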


    \begin{proof}
    Observe that
    \begin{align*}
        &\E\left[\Delta \susceptibilitypi{n}{2} \,| \,G^{\pi}_n \right]\\
        &\quad = \E\left[\susceptibilitypi{n+1}{2} - \frac{n}{n+1}\susceptibilitypi{n}{2} \,\Big|\,G^{\pi}_n \right] - \frac{\susceptibilitypi{n}{2}}{n+1}\\
        &\quad= \frac{1}{n+1}\sum_{i < j} \left((|\clusterpi{\sss \ge i}{n}| + |\clusterpi{\sss \ge j}{n}| + 1)^2 - |\clusterpi{\sss \ge i}{n}|^2 - |\clusterpi{\sss \ge j}{n}|^2\right) \frac{2\pi^2|\clusterpi{\sss \ge i}{n}||\clusterpi{\sss \ge j}{n}|}{n^2}\\
        &\qquad + \frac{1}{n+1}\sum_i\left((|\clusterpi{\sss \ge i}{n}|+1)^2 - |\clusterpi{\sss \ge i}{n}|^2\right)\left(\frac{\pi^2|\clusterpi{\sss \ge i}{n}|^2}{n^2} + \frac{2\pi(1-\pi)|\clusterpi{\sss \ge i}{n}|}{n}\right)\\
        &\qquad + \frac{(1-\pi)^2}{n+1} - \frac{\susceptibilitypi{n}{2}}{n+1}.
    \end{align*}
    Simplifying the right-hand side, we obtain
    \begin{align*}
        \E\left[\susceptibilitypi{n+1}{2} - \susceptibilitypi{n}{2} \mid G^{\pi}_n \right] = \frac{1}{n+1}\left[F(\susceptibilitypi{n}{2}) + R_n\right],
    \end{align*}
    where $F$ is as defined in the lemma, and
    \begin{align*}
        R_n = -\frac{2\pi^2}{n}\susceptibilitypi{n}{4} - \frac{2\pi^2}{n}\susceptibilitypi{n}{3}.
    \end{align*}
    Writing $\xi_{n+1} = (n+1)\big(\Delta \susceptibilitypi{n}{2} - \E[\Delta \susceptibilitypi{n}{2}\mid G^{\pi}_n]\big)$, and using that ${\mathrm Var}(X)\leq \expec[(X-a)^2]$ for every random variable $X$ and $a\in \mathbb{R}$, we bound
        \begin{align*}
        &\E[\xi_{n+1}^2\mid G^{\pi}_n]
        \le 2(n+1)^2 \E\left[\left(\susceptibilitypi{n+1}{2} - \frac{n}{n+1}\susceptibilitypi{n}{2}\right)^2 \,\Big|\, G^{\pi}_n \right]\\
        &\quad = \sum_{i < j} \left((|\clusterpi{\sss \ge i}{n}| + |\clusterpi{\sss \ge j}{n}| + 1)^2 - |\clusterpi{\sss \ge i}{n}|^2 - |\clusterpi{\sss \ge j}{n}|^2\right)^2 \frac{4\pi^2|\clusterpi{\sss \ge i}{n}||\clusterpi{\sss \ge j}{n}|}{n^2}\\
        &\qquad + \sum_i\left((|\clusterpi{\sss \ge i}{n}|+1)^2 - |\clusterpi{\sss \ge i}{n}|^2\right)^2\left(\frac{2\pi^2|\clusterpi{\sss \ge i}{n}|^2}{n^2} + \frac{4\pi(1-\pi)|\clusterpi{\sss \ge i}{n}|}{n}\right)+ 2(1-\pi)^2\\
        & \le \sum_{i < j} \left(16|\clusterpi{\sss \ge i}{n}|^2|\clusterpi{\sss \ge j}{n}|^2 + 16|\clusterpi{\sss \ge i}{n}|^2 + 16|\clusterpi{\sss \ge j}{n}|^2 + 4\right) \frac{4\pi^2|\clusterpi{\sss \ge i}{n}||\clusterpi{\sss \ge j}{n}|}{n^2}\\
        &\qquad + \sum_i\left(8|\clusterpi{\sss \ge i}{n}|^2 +2\right)\left(\frac{2\pi^2|\clusterpi{\sss \ge i}{n}|^2}{n^2} + \frac{4\pi(1-\pi)|\clusterpi{\sss \ge i}{n}|}{n}\right)+ 2(1-\pi)^2\\
        &\le C_1 (\susceptibilitypi{n}{3})^2 + C_2\frac{\susceptibilitypi{n}{4}}{n} + 2(1-\pi)^2,
        \end{align*}
        for universal positive constants $C_1, C_2$, where we have used $\susceptibilitypi{n}{k} \ge 1$ for all $n \ge 1, k\ge 1$. The bound on $\E[\xi_{n+1}^2\mid G^{\pi}_n]$ follows from the above, upon noting that $\susceptibilitypi{n}{4}/n \le \susceptibilitypi{n}{3} \le (\susceptibilitypi{n}{3})^2$ for all $n \ge 1$.
    \end{proof}

By the form of the function $F$, $F(\susceptibilitypi{\infty}{2})=0$ and $\susceptibilitypi{\infty}{2}$ is the only {\em stable} fixed point of the associated discrete dynamical system (see Figure \ref{fig:F}). This suggests that $\susceptibilitypi{n}{2}$ should indeed converge almost surely to $\susceptibilitypi{\infty}{2}$. Leveraging this intuition is one of the highlights of modern stochastic approximation theory, see e.g. \cite{Pema07}.  Proving this in our context however, requires careful analysis owing to the unbounded nature of the jumps and highlights the importance of the apriori a.s. upper bounds on the size of the maximal component (Prop. \ref{prop:max-as-up-bound}).
\begin{figure}
    \centering
    \includegraphics[width=0.7\linewidth]{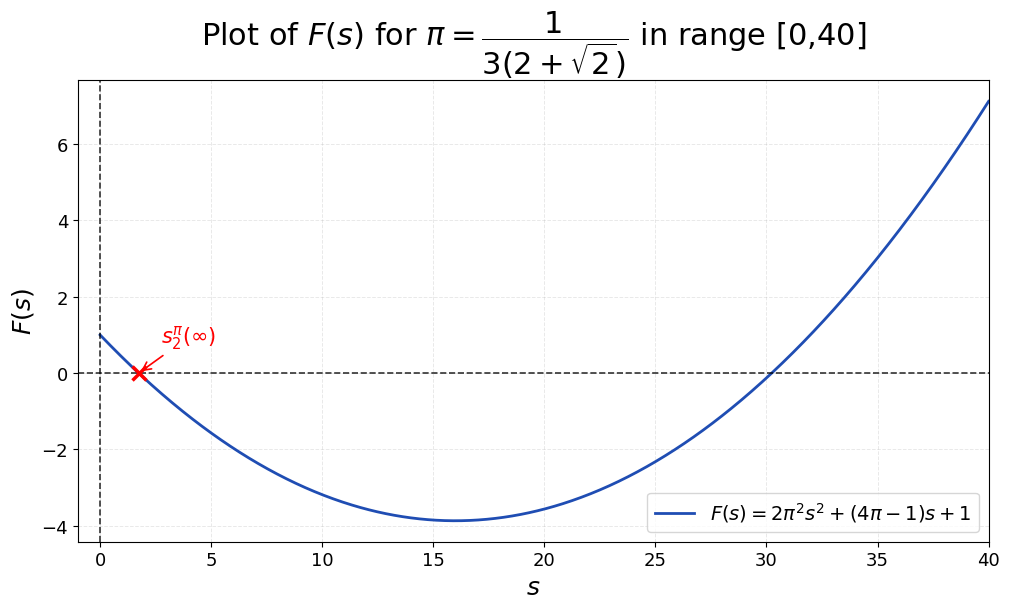}
    \caption{Example of the function $F$ for a specific choice of $\pi< \pi_c$. }
    \label{fig:F}
\end{figure}

\medskip

The following proposition will complete the proof of Theorem \ref{thm-suscep-comp}. We state this result more generally, so that we can apply the technique also to related models, such as the preferential attachment model \cite{BanBhaHazHofRay26}:

\begin{prop}[Convergence of solutions to stochastic approximation]
\label{prop-as-up-down}
Let $(\susceptibility{n}{2})_{n\geq 1}$ be a process, whose evolution can be written as 
        \begin{equation}
        \label{SAscheme-rep}
            \susceptibility{n+1}{2} - \susceptibility{n}{2} = \frac{1}{n+1}\left[F(\susceptibility{n}{2}) + R_n + \xi_{n+1} \right], \qquad n\geq 1,
        \end{equation}
        where the function $F(s)$ equals $F(s) = b(s-\lambda_1)(s-\lambda_2)$ with $b>0, 0<\lambda_1<\lambda_2$, and $|R_n| \leq Kn^{-\vep}\susceptibility{n}{2}^p$ and  $\E[\xi_{n+1}^2\,|\,G^{\pi}_n] \leq K n^{1-\vep}\susceptibility{n}{2}^p$ eventually almost surely for some $K,\vep,p>0$. Suppose further that $\susceptibility{n}{2}\convp \lambda_1$. Then, in fact, $\susceptibility{n}{2}\convas \lambda_1$.
\end{prop}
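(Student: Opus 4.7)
The plan is to strengthen convergence in probability to almost-sure convergence by showing that, for every sufficiently small $\eta>0$, almost surely $|\susceptibility{n}{2}-\lambda_1|\le 2\eta$ for all but finitely many $n$; intersecting this event over $\eta=1/k$, $k\in\mathbb{N}$, then yields $\susceptibility{n}{2}\convas\lambda_1$. The core ingredient is a localized supermartingale built on $(\susceptibility{n}{2}-\lambda_1)^2$ near the stable fixed point $\lambda_1$, combined with convergence in probability to produce, with high probability, an entry time into a small neighborhood of $\lambda_1$, after which the process stays inside $I_{2\eta}:=[\lambda_1-2\eta,\lambda_1+2\eta]$.

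First I would verify contractivity of $F$ near $\lambda_1$. Fix $\eta$ with $4\eta<\lambda_2-\lambda_1$; since $s-\lambda_2\le -(\lambda_2-\lambda_1)/2$ for $s\in I_{2\eta}$,
\[
(s-\lambda_1)F(s)=b(s-\lambda_1)^2(s-\lambda_2)\le -c_0(s-\lambda_1)^2,\qquad c_0:=\tfrac{b(\lambda_2-\lambda_1)}{2}>0,
\]
so the drift of $(\susceptibility{n}{2}-\lambda_1)^2$ is strictly contractive while the process remains in $I_{2\eta}$. Next, fix a deterministic $N$ and set $\tau:=\inf\{n\ge N:\susceptibility{n}{2}\notin I_{2\eta}\}$. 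Expanding the recursion \eqref{SAscheme-rep} yields
\[
\E\bigl[(\susceptibility{n+1}{2}-\lambda_1)^2\,\big|\,G^{\pi}_n\bigr]=(\susceptibility{n}{2}-\lambda_1)^2+\tfrac{2(\susceptibility{n}{2}-\lambda_1)[F(\susceptibility{n}{2})+R_n]}{n+1}+\tfrac{\E[(F(\susceptibility{n}{2})+R_n+\xi_{n+1})^2\,|\,G^{\pi}_n]}{(n+1)^2}.
\]
On $\{n<\tau\}$, $\susceptibility{n}{2}^p$ is bounded by a deterministic constant $M=M(\eta)$, so by hypothesis $|R_n|\le KM\,n^{-\vep}$ and $\E[\xi_{n+1}^2\,|\,G^{\pi}_n]\le KM\,n^{1-\vep}$. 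Combining with contractivity and dropping the negative middle term gives, for $n$ larger than some deterministic constant,
\[
\E\bigl[(\susceptibility{n+1}{2}-\lambda_1)^2\,\big|\,G^{\pi}_n\bigr]\le(\susceptibility{n}{2}-\lambda_1)^2+K_1\,n^{-(1+\vep)}.
\]
Setting $C_n:=K_1\sum_{k\ge n}k^{-(1+\vep)}=O(n^{-\vep})$ and $Z_n:=(\susceptibility{n\wedge\tau}{2}-\lambda_1)^2+C_{n\wedge\tau}$ therefore makes $(Z_n)_{n\ge N}$ a non-negative supermartingale.

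To conclude, fix $\alpha\in(0,1)$ and choose $\delta\in(0,\eta)$ with $2\delta^2<\alpha\eta^2$, and then $N$ large enough that the eventual-a.s.\ hypotheses hold for $n\ge N$, $C_N\le\delta^2$, and $\prob(|\susceptibility{N}{2}-\lambda_1|>\delta)<\alpha$ (possible by $\susceptibility{n}{2}\convp\lambda_1$ and $C_N\to0$). Since $Z_\tau\ge 4\eta^2$ on $\{\tau<\infty\}$, Doob's maximal inequality applied to $(Z_n)_{n\ge N}$ gives $\prob(\tau<\infty\,|\,G^{\pi}_N)\le Z_N/(4\eta^2)$; combining with the bound $Z_N\le 2\delta^2$ on $\{|\susceptibility{N}{2}-\lambda_1|\le\delta\}$ yields $\prob(\tau<\infty)\le \alpha+2\delta^2/(4\eta^2)\le 2\alpha$. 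Hence $\prob(\susceptibility{n}{2}\in I_{2\eta}\ \forall n\ge N)\ge 1-2\alpha$; as $\alpha$ is arbitrary, this probability equals one, and intersecting over $\eta=1/k$ completes the proof. The principal obstacle is that the hypotheses on $R_n$ and $\xi_{n+1}$ involve the a priori unbounded quantity $\susceptibility{n}{2}^p$, preventing a uniform bound on the noise contribution; this is circumvented by confining the analysis to the stopped process before exit from $I_{2\eta}$, on which $\susceptibility{n}{2}^p$ is deterministically bounded. A secondary subtlety is that the single jump at the exit time could overshoot $I_{2\eta}$ substantially, but this is harmless since Doob's inequality controls the probability of \emph{any} upward excursion above $4\eta^2$ directly by $Z_N$, irrespective of the overshoot magnitude.
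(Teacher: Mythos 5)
Your approach is genuinely different from the paper's, and for the most part it works. You use a stochastic-Lyapunov argument: build a non-negative supermartingale out of $(\susceptibility{n}{2}-\lambda_1)^2 + C_n$, stopped at exit from $I_{2\eta}$, then invoke the maximal inequality for non-negative supermartingales (Ville's inequality) together with $\susceptibility{n}{2}\convp\lambda_1$ to ensure a good starting value $Z_N$. The paper instead tracks the process pathwise: it defines entry and exit stopping times around $\lambda_1$, decomposes the accumulated error $\sum (R_j+\xi_{j+1})/(j+1)$, controls the martingale part via Doob's $L^2$-maximal inequality, and then runs a signed-excursion argument using the stopping times $\tau_{2l},\tau_{2l+1},\ldots$ to show that the drift $F$ keeps the process within the band. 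Your Lyapunov route has the modest advantage of handling both the upper and lower bound in a single stroke (no need to track the sign of $\susceptibility{n}{2}-\lambda_1$), and it is a more recognizable stochastic-approximation template. Your contractivity estimate, the derivation of the drift bound $K_1 n^{-(1+\vep)}$ on the stopped process, the compensator $C_n$, and the maximal-inequality bookkeeping with $\delta,\alpha$ are all correct.

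However, there is a genuine gap in how you handle the ``eventually almost surely'' qualifier on the hypotheses for $R_n$ and $\xi_{n+1}$. You write that you ``choose $N$ large enough that the eventual-a.s.\ hypotheses hold for $n\ge N$'', but this is not possible: ``eventually a.s.'' means that for a.e.\ $\omega$ there is a \emph{random} index $N_0(\omega)$ past which the bounds hold; there is no deterministic $N$ valid uniformly in $\omega$. As written, your supermartingale inequality $\E[Z_{n+1}\mid G^\pi_n]\le Z_n$ is unjustified on the part of the sample space where the bounds fail at time $n$ (even though $\susceptibility{n}{2}\in I_{2\eta}$ there). You correctly identify and deal with the \emph{other} obstacle — unboundedness of $\susceptibility{n}{2}^p$ — via the stopping time $\tau$, but you miss this one. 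The paper avoids this by folding failure of the bounds into its stopping time $T_M$ (with $K'=K(\lambda_1+\eta)^p$) and then observing that, on the good event, $T_M<\infty$ forces a failure of the original eventual-a.s.\ bounds at some $n\ge M$, an event whose probability vanishes as $M\to\infty$. The same fix works in your framework: let $\tau':=\inf\{n\ge N:|R_n|>Kn^{-\vep}\susceptibility{n}{2}^p\text{ or }\E[\xi_{n+1}^2\mid G^\pi_n]>Kn^{1-\vep}\susceptibility{n}{2}^p\}$, replace $\tau$ by $\tilde\tau:=\tau\wedge\tau'$ throughout, note that $Z_{\tilde\tau}\ge 4\eta^2$ only on $\{\tilde\tau=\tau<\infty\}$ so that Ville's inequality bounds $\prob(\tilde\tau=\tau<\infty)$, and add the separate term $\prob(\tau'<\infty)=\prob(\text{some bound fails after }N)\to 0$ as $N\to\infty$. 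With that repair, your argument is complete.
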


To see why Proposition \ref{prop-as-up-down} is useful, we first  show how it can be used to prove  Theorem \ref{thm-suscep-comp}:
\begin{proof}[Proof of Theorem \ref{thm-suscep-comp} subject to Proposition \ref{prop-as-up-down}]
We apply Proposition \ref{prop-as-up-down} to the uniform attachment model with $m=2$, for which we take $\susceptibility{n}{2}=\susceptibilitypi{n}{2}$. We check all the conditions:
\begin{itemize}
\item[(a)] Recall the quadratic function $F$ in Lemma \ref{lem:s2-evol} and write this as $F(s) = b(s-\lambda_1)(s-\lambda_2)$, where $\susceptibilitypi{\infty}{2} = \lambda_1< \lambda_2 $ are the two roots and $b=2\pi^2>0$. Thus, the function $F$ satisfies the assumptions in Proposition \ref{prop-as-up-down}. 
\item[(b)] By Lemma \ref{lem:s2-evol}, the second susceptibility of the uniform attachment model satisfies 
\eqref{SAscheme-rep} with $|R_n| \leq K \susceptibilitypi{n}{4}/n$  and $\E[\xi_{n+1}^2\,|\,G^{\pi}_n] \leq K (\susceptibilitypi{n}{3})^2$ for a positive constant $K$. Since, by Proposition \ref{prop:max-as-up-bound}, $\limsup\limits_{n \rightarrow \infty}|\clusterpi{\max}{n}|/n^{\alpha(\pi) + \vep} \leq 1$ eventually almost surely for every $\vep>0$, we also get that
    \eqn{
    |R_n| \leq K \susceptibilitypi{n}{4}/n\leq \frac{K}{n}|\clusterpi{\max}{n}|^2 \susceptibilitypi{n}{2}\leq  n^{2\alpha(\pi)+2\vep-1}\susceptibilitypi{n}{2},
    }
which, since $\alpha(\pi)<\tfrac{1}{2},$ can be made smaller than $K n^{-\vep}\susceptibilitypi{n}{2}$ when taking $\vep>0$ sufficiently small, so that $p=1$. Similarly,
    \eqn{
    \E[\xi_{n+1}^2\,|\,G^{\pi}_n] \leq K (\susceptibilitypi{n}{3})^2\leq |\clusterpi{\max}{n}|^2 \susceptibilitypi{n}{2}^2\leq K n^{1-\vep}\susceptibilitypi{n}{2}^2,
    }
so that $p=2$. Thus, since also $\susceptibilitypi{n}{2}\geq 1$, the required bounds hold for some $K,\vep>0$ and $p=2$.
\item[(c)] Finally, $\susceptibilitypi{n}{2}\convp \lambda_1$ by Proposition \ref{prop:s2-prob-convg}.
\end{itemize}
We conclude that all the conditions in Proposition \ref{prop-as-up-down} apply to $\susceptibilitypi{n}{2}$ in the uniform attachment model with $m=2$, so that Theorem \ref{thm-suscep-comp} follows from Proposition \ref{prop-as-up-down}.
\end{proof}
\medskip

In the remainder of this section, we work with $\susceptibility{n}{2}$ under the conditions in Proposition \ref{prop-as-up-down}, which we prove next:

\begin{proof}[Proof of Proposition \ref{prop-as-up-down}]
    In the following, $C, C_1, C_2, C_3$ will denote positive constants depending only on $\lambda_1,\lambda_2$. We will show that, for any $\eta> 0$,
    \begin{enumeratea}
        \item  $\susceptibility{n}{2} \leq \lambda_1+\eta$ eventually almost surely;
        \item $\susceptibility{n}{2}\geq \lambda_1 -\eta $ eventually almost surely.
    \end{enumeratea}
Together, these show that $\susceptibility{n}{2}\convas \lambda_1$.
\medskip

We start by showing that $\susceptibility{n}{2} \leq \lambda_1+\eta$ eventually almost surely.
We fix any $\eta \in (0, \min((\lambda_1 - \lambda_2),1)$ and $\vep$ as in the statement of Proposition \ref{prop-as-up-down}. For $M \in \mathbb{N}$ and $K'=K(\lambda_1+\eta)^p$, define the stopping times
\begin{align}
N_M &:=\inf\set{n\geq M\colon |\susceptibility{n}{2} - \lambda_1| \leq \eta/4 },\label{stopping-time-bound-1}\\
T_M &:= \inf\set{n > N_M\colon |R_n| > K'n^{-\vep}, \E[\xi_{n+1}^2\,|\,G^{\pi}_n] > K' n^{1-\vep}, \text{ or } \susceptibility{n}{2} \geq \lambda_1+\eta }.
\label{stopping-time-bound-2}
 \end{align}
Note that, as $s_2(n) \convp \lambda_1$, it has an almost surely convergent subsequence and thus $N_M < \infty$ almost surely for any $M \in \mathbb{N}$.

For $M \in \mathbb{N}$, define
\begin{equation}\label{errdef}
    \mathcal{E}^{\sss(M)}_n := \sum_{j=N_M}^{n-1} \frac{1}{j+1}\left(R_j + \xi_{j+1}\right), \quad n > N_M.
\end{equation}
Define the process $\mathcal{M}_{N_M}=0$ and 
$$
\mathcal{M}_n := \sum_{j=N_M+1}^{T_M \wedge n}\frac{\xi_j}{j}, \quad n \ge N_M+1.
$$ 
This is a (stopped) martingale with respect to the filtration generated by $(G^\pi_n)_{n \ge N_M}$ with quadratic variation given by
$$
\langle \mathcal{M}\rangle_n = \sum_{j=N_M+1}^{T_M \wedge n}\frac{\mathbb{E}\big[\xi_{j}^2 \mid G^\pi_{j-1}\big]}{j^2}, \quad n \ge N_M+1.
$$
Thus, by Doob's $L^2$-maximal inequality (applied conditionally given $G^\pi_{N_M}$),  for any $\delta>0$,
\begin{align}\label{stopping-time-bound-3}
    &\prob\left(\sup_{N_M < n \le T_M} \left|\sum_{j=N_M}^{n-1}\frac{\xi_{j+1}}{j+1}\right| > \delta ~\Big|~ G^\pi_{N_M}\right) \le C \delta^{-2} \mathbb{E}\left[\mathcal{M}_{T_M}^2\right] = C \delta^{-2} \mathbb{E}\left[\langle \mathcal{M}\rangle_{T_M}~\Big|~ G^\pi_{N_M}\right]\notag\\
    &\quad = \mathbb{E}\left[\sum_{j=N_M+1}^{T_M}\frac{\mathbb{E}\big[\xi_{j}^2 \mid G^\pi_{j-1}\big]}{j^2}~\Big|~ G^\pi_{N_M}\right] \le K'\mathbb{E}\left[\sum_{j=N_M+1}^{T_M} \frac{1}{j^{1 + \vep}}~\Big|~ G^\pi_{N_M}\right] \le K' \sum_{j=M+1}^\infty \frac{1}{j^{1 + \vep}},
\end{align}
where we have used the definition of $T_M$ in the second inequality, and the fact that $N_M\geq M$ in the last inequality.

From \eqref{stopping-time-bound-1}, \eqref{stopping-time-bound-2} and \eqref{stopping-time-bound-3}, we obtain that there exists an $M_0=M_0(\eta) \in \mathbb{N}$ such that, for all $M \ge M_0$,
\begin{align}\label{doobl2}
    \prob\left(\sup_{N_M < n \le T_M} \left|\mathcal{E}^{\sss(M)}_n\right| > \eta/8~\Big|~ G^\pi_{N_M}\right) &\le \prob\left(C_1 \sum_{j=M}^{\infty}\frac{1}{j^{1+ \vep}} + \sup_{N_M < n \le T_M} \left|\sum_{j=N_M}^{n}\frac{\xi_{j+1}}{j+1}\right| > \eta/8~\Big|~ G^\pi_{N_M}\right)\notag\\
    & \le \prob\left(\sup_{N_M < n \le T_M} \left|\sum_{j=N_M}^{n-1}\frac{\xi_{j+1}}{j+1}\right| > \eta/16~\Big|~ G^\pi_{N_M}\right)\notag\\
    &\le \frac{C_2}{\eta^2}\sum_{j=M+1}^{\infty}\frac{1}{j^{1+ \vep}}\le \frac{C_3}{\eta^2M^{\vep}}.
\end{align}

The above bound shows that, in the stochastic approximation scheme in \eqref{SAscheme}, the `fluctuations' due to the error and noise are quantifiably small. Hence, the `drift term' given by $F$ pulls the process $\susceptibility{\cdot}{2}$ towards $\lambda_1$ and keeps $\susceptibility{\cdot}{2}$ in the interval $(\lambda_1 - \eta, \lambda_1 + \eta)$. 
\smallskip

To make this precise, without loss of generality, we assume $\susceptibility{N_M}{2} > \lambda_1$ and define the sequence of stopping times $(\tau_l)_{l\geq 0}$ by $\tau_0 := N_M$ and, for $l \ge 0$,
\begin{align}
    \tau_{2l+1} &:= \inf\{ n > \tau_{2l} \colon \susceptibility{n}{2} \le \lambda_1\} \wedge T_M,
    \label{stopping-times-def}\\
    \tau_{2l+2} &:= \inf\{ n > \tau_{2l+1} \colon \susceptibility{n}{2} > \lambda_1\} \wedge T_M.
\end{align}
Also define the event
$$
\mathscr{E}_M := \left\lbrace \sup_{N_M < n \le T_M} \left|\mathcal{E}^{\sss(M)}_n\right| \le \eta/8 \right\rbrace.
$$
On the event $\mathscr{E}_M$, for any $l \ge 0$, $\susceptibility{\tau_{2l}}{2} \le \lambda_1 + \eta/4$. Moreover, as $F(\susceptibility{n}{2}) \le 0$ for $n \in [\tau_{2l}, \tau_{2l+1}-1]$, we obtain from the recursive decomposition in Lemma \ref{lem:s2-evol} that, uniformly for $n \in [\tau_{2l}, \tau_{2l+1}-1]$,
\begin{equation*}
\susceptibility{n+1}{2} -\lambda_1 \leq \susceptibility{\tau_{2l}}{2} -\lambda_1 + \sum_{j=\tau_{2l}}^n \frac{R_j}{j+1} + \sum_{j=\tau_{2l}}^n \frac{\xi_{j+1}}{j+1} \le \frac{3\eta}{8}.
\end{equation*}
By a similar argument, uniformly for $n \in [\tau_{2l+1}, \tau_{2l+2}-1]$,
\begin{equation*}
    \susceptibility{n+1}{2} -\lambda_1  \ge -\frac{3\eta}{8}.
\end{equation*}
In particular, on the event $\mathscr{E}_M$, $|\susceptibility{n+1}{2} -\lambda_1| \le 3\eta/4$ for all $n \in [N_M, T_M-1]$. Moreover, this implies that, on the event $\mathscr{E}_M$, the only way in which $T_M$ is finite is if $|R_n| > K'n^{-\vep}$ or $\E[\xi_{n+1}^2\,|\,G^{\pi}_n] > K' n^{1-\vep}$ for some finite $n$ where $\susceptibility{n}{2} \leq \lambda_1+ 3\eta/4$; in particular, $|R_n| > Kn^{-\vep}s_2(n)^p$ or $\E[\xi_{n+1}^2\,|\,G^{\pi}_n] > K n^{1-\vep}s_2(n)^p$. Thus,
\begin{align*}
    &\prob\left(|\susceptibility{n+1}{2} -\lambda_1| > \eta \text{ for some } n \ge N_M\right)\\
    &\qquad \le \prob\left(\mathscr{E}_M^c\right)+ \prob\left(|R_n| > Kn^{-\vep}s_2(n)^p, \text{ or }\E[\xi_{n+1}^2\,|\,G^{\pi}_n] > K n^{1-\vep}s_2(n)^p \text{ for some } n \ge M\right).
\end{align*}
The above bound tends to zero as $M \to \infty$ by \eqref{doobl2} and the assumptions in Proposition \ref{prop-as-up-down}, which completes the proof.
\end{proof}

The following proposition gives a rate of convergence of $\susceptibility{n}{2}$ to $\lambda_1$, which will be useful later in obtaining precise asymptotics for the root component size:

\begin{prop}[Polynomial convergence in stochastic approximation]
\label{prop-conrates2}
Under the assumptions in Proposition \ref{prop-as-up-down}, there exists $\gamma >0$ such that, as $n \to \infty$,
    \begin{equation*}
        n^{\gamma}|\susceptibility{n}{2} - \lambda_1| \convas 0.
    \end{equation*}
\end{prop}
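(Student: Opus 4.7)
The plan is to linearize the recursion around the stable fixed point $\lambda_1$ and track a suitably weighted second moment of the deviation. Set $Y_n := \susceptibility{n}{2} - \lambda_1$ and $c := b(\lambda_2 - \lambda_1) > 0$. Since $F(\lambda_1 + y) = -cy + by^2$, the recursion \eqref{SAscheme-rep} reads
\begin{equation*}
Y_{n+1} = Y_n\Big(1 - \frac{c}{n+1}\Big) + \frac{1}{n+1}\big[b Y_n^2 + R_n + \xi_{n+1}\big],
\end{equation*}
so the linear part contracts $Y_n$ toward $0$, and I will exploit this to propagate a polynomial rate.

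First, I would square both sides and take conditional expectation given $G^\pi_n$. Using the martingale difference property of $\xi_{n+1}$, the assumed bounds $|R_n| \le K n^{-\vep} \susceptibility{n}{2}^p$ and $\E[\xi_{n+1}^2 \mid G^\pi_n] \le K n^{1-\vep} \susceptibility{n}{2}^p$, together with the fact that $\susceptibility{n}{2} \convas \lambda_1$ from Proposition \ref{prop-as-up-down} (so eventually $\susceptibility{n}{2}^p$ is bounded by a deterministic constant), one obtains after straightforward algebra that, eventually almost surely,
\begin{equation*}
\E[Y_{n+1}^2 \mid G^\pi_n] \le Y_n^2\Big(1 - \frac{2c}{n+1}\Big) + \frac{C_1 |Y_n|^3 + C_2 |Y_n| n^{-\vep}}{n+1} + \frac{C_3}{n^{1+\vep}}.
\end{equation*}
Since $Y_n \to 0$ a.s., for any $\delta > 0$ eventually $|Y_n|^3 \le \delta Y_n^2$; and Young's inequality gives $|Y_n| n^{-\vep} \le \delta Y_n^2 + (4\delta)^{-1} n^{-2\vep}$. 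Choosing $\delta$ small enough to absorb these quadratic-in-$Y_n$ perturbations into the contractive drift, I obtain, for some $\vep' \in (0, \vep)$ and eventually almost surely,
\begin{equation*}
\E[Y_{n+1}^2 \mid G^\pi_n] \le Y_n^2\Big(1 - \frac{c}{n+1}\Big) + \frac{C_4}{n^{1+\vep'}}.
\end{equation*}

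Next, fix $\gamma \in \big(0, \tfrac{1}{2}\min(c, \vep')\big)$ and set $V_n := n^{2\gamma} Y_n^2$. Using $(n+1)^{2\gamma} n^{-2\gamma} = 1 + 2\gamma/n + O(n^{-2})$ and multiplying the previous display by $(n+1)^{2\gamma}$, I arrive at, eventually a.s.,
\begin{equation*}
\E[V_{n+1} \mid G^\pi_n] \le V_n\Big(1 - \frac{c - 2\gamma}{n+1}\Big) + \frac{C_5}{n^{1 + \vep' - 2\gamma}}.
\end{equation*}
Since $\vep' - 2\gamma > 0$, the noise term is summable. The Robbins--Siegmund almost-supermartingale convergence theorem then yields that $V_n$ converges almost surely to a finite random limit $V_\infty \ge 0$ and that $\sum_n V_n/(n+1) < \infty$ almost surely. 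The latter forces $V_\infty = 0$, since otherwise $V_n$ would be bounded below by a positive constant for all large $n$ on a set of positive probability, making $\sum_n V_n/(n+1)$ divergent there. Hence $n^\gamma |Y_n| \to 0$ almost surely, establishing the claim.

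The main technical hurdle will be the careful choice of the weight $\gamma$: it must be strictly smaller than $c/2$ to preserve an effective contraction after the $n^{2\gamma}$ rescaling, and strictly smaller than $\vep'/2$ so that the rescaled noise remains summable, where $\vep'$ itself is obtained by absorbing the nonlinear term $bY_n^2$ and the mixed term $Y_n R_n$ using the eventual smallness of $Y_n$ together with Young's inequality. A secondary subtlety is that the bounds on $R_n$ and $\xi_{n+1}$ involve $\susceptibility{n}{2}^p$ rather than constants, but this factor is harmless because Proposition \ref{prop-as-up-down} already gives $\susceptibility{n}{2} \to \lambda_1$ almost surely, allowing these bounds to be replaced by their constant-coefficient versions on an event of full probability.
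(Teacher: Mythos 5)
Your proposal is correct and takes a genuinely different route from the paper. The paper's proof is path-wise: it first establishes a \emph{subsequential} polynomial rate by iterating the stochastic-approximation recursion on intervals where $\susceptibility{n}{2} - \lambda_1$ has a consistent sign, controlling the accumulated error via Doob's $L^2$ maximal inequality, and then introduces the transformed process $h(n) = n^\gamma(\susceptibility{n}{2} - \lambda_1)$, derives a new stochastic-approximation scheme for $h$ with drift $\tilde F(s) = F(s) + \gamma(s-\lambda_1)$, and re-applies the stopping-time machinery from the proof of Proposition \ref{prop-as-up-down}. Your approach is the classical Lyapunov/second-moment method: you directly establish an almost-supermartingale inequality for $V_n = n^{2\gamma}(\susceptibility{n}{2}-\lambda_1)^2$ and invoke Robbins--Siegmund, bypassing both the subsequential bootstrap and the re-application of the $\tilde F$-scheme. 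This is cleaner and yields an explicit admissible range $\gamma < \tfrac{1}{2}\min(b(\lambda_2-\lambda_1), \vep)$, at the cost of invoking an external convergence theorem rather than re-using the paper's own infrastructure.

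One point you gloss over deserves a sentence or two in a full write-up. The hypotheses of Proposition \ref{prop-as-up-down} -- the bounds on $R_n$, $\E[\xi_{n+1}^2\mid G^\pi_n]$, and the eventual boundedness of $\susceptibility{n}{2}^p$ -- only hold \emph{eventually almost surely}, not for all $n$. Your supermartingale inequality therefore also holds only eventually a.s., and the Robbins--Siegmund theorem requires it pathwise from a fixed time. The standard repair (which the paper implements explicitly via $N_M, T_M$) is to introduce, for each $M$, the stopping time $T_M$ at which one of these bounds first fails after time $M$ or $|\susceptibility{n}{2}-\lambda_1|$ exceeds some fixed small threshold; apply Robbins--Siegmund to the stopped process $V_{n\wedge T_M}$; conclude $V_n \to 0$ on $\{T_M = \infty\}$; and let $M \to \infty$, using $\pr(T_M = \infty) \to 1$. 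This is routine, but since the application of Robbins--Siegmund is the crux of your argument, the stopping-time truncation should not be omitted.
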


\begin{proof}
Recall the representation $F(s) = b(s-\lambda_1)(s-\lambda_2)$, where $F$ is introduced in Proposition \ref{prop-as-up-down}. Also recall the stopping times $N_M, T_M$ with $\eta = (\lambda_2 - \lambda_1)/2$, and $(\tau_l)_{l \ge 0}$ defined in \eqref{stopping-times-def} in the proof of Proposition \ref{prop-as-up-down}, as well as the stochastic approximation scheme, for $n \ge N_M$,
\begin{equation*}
    \susceptibility{n+1}{2} - \lambda_1 = (\susceptibility{n}{2} - \lambda_1)\left(1 + \frac{b(\susceptibility{n}{2} - \lambda_2)}{n+1}\right) + \mathcal{E}^{\sss(M)}_{n+1} - \mathcal{E}^{\sss(M)}_n,
\end{equation*}
where $\mathcal{E}^{\sss(M)}_n$ is defined in \eqref{errdef} with $\mathcal{E}^{\sss(M)}_{N_M}=0$.
We will first prove that, almost surely, there exist a $\delta>0$ and a sequence $n_k \to \infty$ such that, for all $k$ satisfying $N_M \le n_k < n_k^2 \le T_M$,
\begin{equation}\label{subseq}
n_k^{\delta}|\susceptibility{n^2_k}{2} - \lambda_1| \le C,
\end{equation}
for some deterministic constant $C>0$.

For this, for any $n,m \in [\tau_{2l}, \tau_{2l+1}-1]$ with $n>m$, recursively applying the stochastic approximation scheme, we obtain
\begin{align*}
\susceptibility{n+1}{2} - \lambda_1 &\le (\susceptibility{n}{2} - \lambda_1)\left(1 - \frac{b(\lambda_2 - \lambda_1)}{2(n+1)}\right) + \mathcal{E}^{\sss(M)}_{n+1} - \mathcal{E}^{\sss(M)}_n\\
&\le (\susceptibility{m}{2} - \lambda_1)\prod_{j=m+1}^{n+1}\left(1 - \frac{b(\lambda_2 - \lambda_1)}{2j}\right) + \tilde{\mathcal{E}}^{\sss(M)}_{n+1} - \tilde{\mathcal{E}}^{\sss(M)}_{m},
\end{align*}
where, for $\ell \in \mathbb{N}$,
$$
\tilde{\mathcal{E}}^{\sss(M)}_{\ell} := \sum_{j=1}^{\ell}\left(\prod_{k=j+1}^{n+1}\left(1 - \frac{b(\lambda_2 - \lambda_1)}{2k}\right)\right)\left(\mathcal{E}^{\sss(M)}_{j+1} - \mathcal{E}^{\sss(M)}_j\right),
$$
$\prod_{k=n+2}^{n+1} = 1$, and $\tilde{\mathcal{E}}^{\sss(M)}_{0}=0$. Writing $\theta = b(\lambda_2 - \lambda_1)/2$, and applying the standard exponential inequality $1-x\leq \e^{-x}$, we obtain
\begin{align*}
\susceptibility{n+1}{2} - \lambda_1 &\le (\susceptibility{m}{2} - \lambda_1)\exp\left(- \theta\sum_{j=m+1}^{n+1}\frac{1}{j}\right) + \tilde{\mathcal{E}}^{\sss(M)}_{n+1} - \tilde{\mathcal{E}}^{\sss(M)}_{m}\\
&\le \left(\frac{m+1}{n+2}\right)^\theta (s_2(m) - \lambda_1) + \tilde{\mathcal{E}}^{\sss(M)}_{n+1} - \tilde{\mathcal{E}}^{\sss(M)}_{m}.
\end{align*}
By a symmetric argument, we also obtain, for $n, m \in [\tau_{2l+1}, \tau_{2l+2}-1]$ with $n > m$,
\begin{equation*}
    \susceptibility{n+1}{2} - \lambda_1 \ge \left(\frac{m+1}{n+2}\right)^\theta (\susceptibility{m}{2} - \lambda_1) + \tilde{\mathcal{E}}^{\sss(M)}_{n+1} - \tilde{\mathcal{E}}^{\sss(M)}_{m}.
\end{equation*}
Thus, we obtain positive constants $C_1, C_2$ such that, for any $n \in \mathbb{N}$, on the event $\{N_M \le n < n^2 < T_M\}$,
\begin{equation}\label{contraction}
|\susceptibility{n^2}{2} - \lambda_1| \le \frac{C_1}{n^{\theta}}|\susceptibility{n}{2} - \lambda_1| + \left|\tilde{\mathcal{E}}^{\sss(M)}_{n^2} - \tilde{\mathcal{E}}^{\sss(M)}_{n}\right|.
\end{equation}
Exactly as in \eqref{doobl2}, by  Doob's $L^2$-maximal inequality, with $\mathcal{E}^{\sss(M)}_{\cdot}$ replaced by $\tilde{\mathcal{E}}^{\sss(M)}_{\cdot}$, there exists $C_3>0$ such that, on the event $\{N_M \le n\}$,
\begin{equation*}
\prob\left(\sup_{n \le \ell \le T_M}\left|\tilde{\mathcal{E}}^{\sss(M)}_\ell - \tilde{\mathcal{E}}^{\sss(M)}_n\right| > n^{-\vep/2} \, \Big| \, G^\pi_{n}\right) \le \frac{C_3}{n^\vep}.
\end{equation*}
From this, almost surely, we can obtain a sequence $n_k \rightarrow \infty$ such that, for all $k \in \mathbb{N}$ with $N_M \le n_k \le T_M$,
\begin{equation*}
    \left|\tilde{\mathcal{E}}^{\sss(M)}_\ell - \tilde{\mathcal{E}}^{\sss(M)}_{n_k}\right| \le n_k^{-\vep/2} \quad \text{ for all } \ell \in [n_k, T_M].
\end{equation*}
The inequality in \eqref{subseq} follows from the above and \eqref{contraction} with $\delta = \theta \wedge (\vep/2)$.
\smallskip

Next, we extend the bound \eqref{subseq} to all $n$. To do so, for any $\gamma \in (0,1 \wedge (\delta/2) \wedge b(\lambda_2 - \lambda_1))$, we define
$$
h(n) := n^\gamma (\susceptibility{n}{2} - \lambda_1), \quad n \in \mathbb{N}.
$$
Note that, for a deterministic bounded sequence $(\vep_n)_{n\geq 1}$,
\begin{align*}
    h(n+1) - h(n) &= (n+1)^\gamma(\susceptibility{n+1}{2} - \susceptibility{n}{2}) + ((n+1)^\gamma - n^\gamma)(\susceptibility{n}{2} - \lambda_1)\\
    &= \frac{1}{(n+1)^{1-\gamma}}\left(F(\susceptibility{n}{2}) + R_n + \xi_{n+1}\right) + \left(\frac{\gamma}{(n+1)^{1-\gamma}} + \frac{\vep_n}{(n+1)^{2-\gamma}}\right)(\susceptibility{n}{2} - \lambda_1)\\
    & = \frac{1}{(n+1)^{1-\gamma}}\tilde{F}(\susceptibility{n}{2}) + \frac{\tilde{R}_n + \tilde{\xi}_{n+1}}{n+1},
\end{align*}
where
\begin{align*}
    \tilde{F}(s) &:= F(s) + \gamma (s - \lambda_1),\\
    \tilde{R}_n &:= (n+1)^{\gamma}R_n + \frac{\vep_n}{(n+1)^{1-\gamma}}(\susceptibility{n}{2} - \lambda_1),\\
    \tilde{\xi}_{n+1} &:= (n+1)^{\gamma}\xi_{n+1}.
\end{align*}
Observe that $s \mapsto \tilde{F}(s)$ has zeros at $\lambda_1, \tilde{\lambda}_2$ for some $\tilde{\lambda}_2 \in (\lambda_1, \lambda_2)$. Fix $\tilde{\eta} \in (0, \min((\lambda_1 - \tilde{\lambda}_2),1)$ and, similarly as in the proof of Proposition \ref{prop-as-up-down}, for $M \in \mathbb{N}$, $\tilde{\vep}, \gamma >0$ sufficiently small, and $\tilde{K}>0$ sufficiently large, define
 \begin{align*}
 \tilde{N}_M &:=\inf\set{n\geq M\colon  |h(n)| \leq \tilde{\eta}/4 },\\
\tilde{T}_M &:= \inf\set{n > \tilde{N}_M\colon |\tilde{R}_n| > \tilde{K} n^{-\tilde{\vep}} \ \text{ or } \ \E[\tilde{\xi}_{n+1}^2\,|\,G^{\pi}_n] > \tilde{K} n^{1-\tilde{\vep}}}.
 \end{align*}
By \eqref{subseq}, for any $M \in \mathbb{N}$, on the event $\{T_M = \infty\}$, $\tilde{N}_M$ is almost surely finite. 
Now, arguing exactly as in Proposition \ref{prop-as-up-down}, we conclude that $h(n) \convas 0$, and Proposition \ref{prop-conrates2} follows.
\end{proof}

\section{Asymptotics for the component size of the first vertex}
\label{sec:proof-first-comp}
The goal of this section is to prove Theorem \ref{thm-max-comp}(a). We will only prove the case $i=1$ (first component). The proof for general $i \ge 1$ is very similar and is omitted.

We start in Section \ref{as} with the almost sure convergence which follows from a positive supermartingale argument along with Proposition \ref{prop-conrates2}. Proving the positivity of the limit is surprisingly delicate and is undertaken in Section \ref{sec-almost-sure-positive-limit}. In Section \ref{sec-weak-lower-bound-component} we prove a preliminary weaker lower bound on $|\clusterpi{1}{n}|$ that we will rely on in Section \ref{sec-almost-sure-positive-limit}.

\subsection{Almost sure convergence of the component size of the first vertex}
\label{as}

In this section, we will prove the following proposition, which gives the almost sure convergence claimed in Theorem \ref{thm-max-comp}(a) for $i=1$: 

\begin{prop}[Almost sure convergence of percolated component of vertex 1]\label{prop:ubc1}
    There exists a finite random variable $\zeta_1$ such that
    $$
    n^{-\alpha(\pi)}|\clusterpi{1}{n}| \convas \zeta_1.
    $$
\end{prop}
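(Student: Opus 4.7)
The plan is to leverage the supermartingale $M(t)$ from \eqref{super-martingale}, the polynomial rate of convergence of Proposition \ref{prop-conrates2}, and the continuous-time Yule embedding. Let $\tau_n$ denote the $n$-th jump time of the Yule process $N(\cdot)$, so that $|\clusterpi{1}{\tau_n}|$ in continuous time coincides with $|\clusterpi{1}{n}|$ in discrete time. The generator bound \eqref{dynamics-connected-component-continuous} implies that $M(t)=|\clusterpi{1}{t}|\exp\big(-\int_0^t[2\pi^2\csusceptibilitypi{u}{2}+2\pi]\,\dif u\big)$ is a non-negative supermartingale, so by the martingale convergence theorem $M(t)\convas M_\infty$ for some finite non-negative random variable $M_\infty$; since $\tau_n\uparrow\infty$ a.s., also $M(\tau_n)\convas M_\infty$.

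Next, I would show that the integral
\[
I_\infty := \int_0^\infty(\csusceptibilitypi{u}{2}-\susceptibilitypi{\infty}{2})\,\dif u
\]
is almost surely finite. Since $\csusceptibilitypi{t}{2}=\susceptibilitypi{N(t)}{2}$, Proposition \ref{prop-conrates2} (applicable via Theorem \ref{thm-suscep-comp}) yields $N(t)^\gamma|\csusceptibilitypi{t}{2}-\susceptibilitypi{\infty}{2}|\convas 0$ for some $\gamma>0$. Combined with the standard Yule-process asymptotics $N(t)\e^{-t}\convas E$ for $E\sim\mathrm{Exp}(1)$, this produces an eventual a.s. bound of the form $|\csusceptibilitypi{t}{2}-\susceptibilitypi{\infty}{2}|\le C(\omega)\e^{-\gamma t}$, which is integrable on $[0,\infty)$ a.s. Together with the trivial bound $|\csusceptibilitypi{t}{2}-\susceptibilitypi{\infty}{2}|\le \csusceptibilitypi{t}{2}+\susceptibilitypi{\infty}{2}$ on any compact interval, this makes $I_\infty$ finite almost surely.

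Using the key identity $\alpha(\pi)=2\pi^2\susceptibilitypi{\infty}{2}+2\pi$ from \eqref{eqn:alpha-pi-susc-rel}, I would then decompose
\[
\int_0^{\tau_n}[2\pi^2\csusceptibilitypi{u}{2}+2\pi]\,\dif u = \alpha(\pi)\tau_n + 2\pi^2\int_0^{\tau_n}(\csusceptibilitypi{u}{2}-\susceptibilitypi{\infty}{2})\,\dif u.
\]
From $N(\tau_n)=n$ together with $N(t)\e^{-t}\convas E$, one obtains $\tau_n-\log n\convas -\log E$, so $\e^{\alpha(\pi)\tau_n}/n^{\alpha(\pi)}\convas E^{-\alpha(\pi)}$. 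Substituting into the defining formula for $M(\tau_n)$ and dividing both sides by $n^{\alpha(\pi)}$ gives
\[
\frac{|\clusterpi{1}{n}|}{n^{\alpha(\pi)}}
= M(\tau_n)\cdot\frac{\exp\!\big(\alpha(\pi)\tau_n+2\pi^2\!\int_0^{\tau_n}(\csusceptibilitypi{u}{2}-\susceptibilitypi{\infty}{2})\,\dif u\big)}{n^{\alpha(\pi)}}
\convas M_\infty\cdot E^{-\alpha(\pi)}\cdot \e^{2\pi^2 I_\infty} =: \zeta_1,
\]
which is almost surely finite, as required.

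The principal obstacle is establishing integrability of $\csusceptibilitypi{u}{2}-\susceptibilitypi{\infty}{2}$ on $[0,\infty)$: mere a.s.\ convergence $\csusceptibilitypi{t}{2}\to\susceptibilitypi{\infty}{2}$ from Theorem \ref{thm-suscep-comp} would be insufficient to guarantee convergence of $I_\infty$, and the polynomial rate from Proposition \ref{prop-conrates2} is indispensable precisely here. Note that this argument only delivers a.s.\ finiteness of $\zeta_1$; strict positivity (i.e., $\zeta_1>0$ a.s.) requires a different argument using a companion non-negative martingale of the form $M^{\sss(-1)}(t)$ described in the proof outline, and is deferred to Section \ref{sec-almost-sure-positive-limit}.
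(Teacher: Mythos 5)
Your proposal is correct and follows essentially the same route as the paper's proof: both rest on the supermartingale $M(t)$, the polynomial rate of convergence from Proposition~\ref{prop-conrates2}, and the Yule asymptotics $N(t)\e^{-t}\convas E$, and both reduce the matter to the a.s.\ integrability of $|\csusceptibilitypi{u}{2}-\susceptibilitypi{\infty}{2}|$ over $[0,\infty)$. The only cosmetic difference is that you obtain the integrability by converting the rate $n^{-\gamma}$ directly into an exponential-in-$t$ bound via $N(t)\e^{-t}\convas E$, whereas the paper rewrites the integral as the Riemann sum $\sum_n (T_{n+1}-T_n)|\susceptibilitypi{n}{2}-\susceptibilitypi{\infty}{2}|$ and controls the inter-arrival times with a Borel--Cantelli estimate; both variants are valid.
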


To prove Proposition \ref{prop:ubc1}, it will be convenient to use the continuous-time construction $(G^\pi_t)_{t\geq 0}$ defined in Section \ref{sec:int}. As before,  $(|\clusterpi{1}{t}|)_{t\geq 0}$ denotes the size process of the component containing vertex one. Recall the evolution of $|\clusterpi{1}{t}|$ described in \eqref{dynamics-connected-component-continuous}.

For $k \ge 2$, recall the definition of the $k^{\rm th}$ susceptibility in continuous time in \eqref{susdef}. 
Using the above description the following is easy to check: 

\begin{lem}[Super-martingale for component-size process]
\label{lem:super-mg}
    In continuous time, the process
    \[ M(t) = |\clusterpi{1}{t}| \exp\left(-\int_0^t [2\pi^2S_2^\pi(u) + 2\pi]~ du \right), \qquad t\geq 0, \]
    is a non-negative super-martingale, and thus converges almost surely to a finite random variable $M(\infty)$.
\end{lem}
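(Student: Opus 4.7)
The plan is to apply Dynkin's formula to the product $M(t) = |\clusterpi{1}{t}|\,\phi(t)$, where $\phi(t) = \exp\!\left(-\int_0^t [2\pi^2 S_2^\pi(u) + 2\pi]\, du\right)$, and then invoke the standard convergence theorem for non-negative supermartingales. Since $\phi$ is a continuous, decreasing, positive, finite-variation process adapted to the natural filtration $(\mathcal{F}_t)_{t\ge 0}$ (built from the component $(G^\pi_t)_{t\ge 0}$), the product rule gives
\[
dM(t) = \phi(t)\, d|\clusterpi{1}{t}| \;-\; |\clusterpi{1}{t}|\,\phi(t)\,\bigl[2\pi^2 S_2^\pi(t) + 2\pi\bigr]\, dt.
\]
Decomposing the pure-jump process $|\clusterpi{1}{t}|$ into its predictable compensator and martingale parts yields $d|\clusterpi{1}{t}| = \mathcal{L}|\clusterpi{1}{t}|\, dt + dN(t)$, where $N$ is a local martingale (the compensated jump martingale), and substituting gives
\[
dM(t) = \phi(t)\Bigl\{\mathcal{L}|\clusterpi{1}{t}| - |\clusterpi{1}{t}|\bigl[2\pi^2 S_2^\pi(t) + 2\pi\bigr]\Bigr\} dt + \phi(t)\, dN(t).
\]

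The first step is to verify the local martingale/compensator decomposition is legitimate, which uses that, at any fixed $t$, the jump rates in (a)--(c) of Section~\ref{sec:int} are finite (bounded by $cN(t)$ for a constant $c$) so the predictable compensator of $|\clusterpi{1}{t}|$ is integrable against $dt$ on bounded intervals. The second step is the key inequality: by the computation already carried out in \eqref{dynamics-connected-component-continuous},
\[
\mathcal{L}|\clusterpi{1}{t}| \le \bigl[2\pi^2 S_2^\pi(t) + 2\pi\bigr]\,|\clusterpi{1}{t}|,
\]
so the drift in $dM(t)$ is pointwise non-positive. Hence $M(t)$ is a non-negative local supermartingale.

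The third step is the upgrade from local supermartingale to genuine supermartingale. This is classical: for any localizing sequence $\tau_n \uparrow \infty$ with $M(\cdot \wedge \tau_n)$ a true supermartingale, Fatou's lemma in the conditional expectation gives
\[
\E[M(t)\mid \mathcal{F}_s] \le \liminf_{n\to\infty} \E[M(t\wedge \tau_n)\mid \mathcal{F}_s] \le \liminf_{n\to\infty} M(s\wedge \tau_n) = M(s),
\]
using non-negativity of $M$. In particular $\E[M(t)] \le \E[M(0)] = 1$ for all $t\ge 0$. Finally, Doob's convergence theorem for non-negative supermartingales delivers an almost sure limit $M(\infty)$, finite almost surely since $\E[M(\infty)] \le \liminf_{t\to\infty}\E[M(t)] \le 1$ by Fatou. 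No step here presents a significant obstacle; the only care needed is to carry out the compensator/product-rule computation for a jump process with state-dependent rates, for which the decomposition above is standard.
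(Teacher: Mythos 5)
Your proposal is correct and follows essentially the same route as the paper, whose proof of Lemma~\ref{lem:super-mg} is simply a pointer to the generator bound in~\eqref{dynamics-connected-component-continuous} and the definition~\eqref{super-martingale}. You supply the standard scaffolding the paper leaves implicit---the Dynkin/product-rule decomposition with $\phi(t)$ as a finite-variation multiplier, the observation that the drift is $\phi(t)\bigl(\mathcal{L}|\clusterpi{1}{t}|-[2\pi^2 S_2^\pi(t)+2\pi]|\clusterpi{1}{t}|\bigr)\le 0$, and the classical Fatou-plus-nonnegativity upgrade from local supermartingale to true supermartingale---but the argument is the same in substance, so nothing further is needed.
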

\begin{proof} See \eqref{dynamics-connected-component-continuous} and \eqref{super-martingale}.
\end{proof}

\begin{proof}[Proof of Proposition \ref{prop:ubc1}]
Recall that, in continuous time, the size process $N(\cdot)$ is a rate-one Yule process. For $n\in \mathbb{N}$, define the stopping time $T_n := \inf\set{t\geq 0\colon N(t) = n}$. Standard results about the Yule process (see, e.g., \cite[Chapter 2]{Norr98}) imply that 
\begin{equation}
\label{eqn:tn-as}
    T_n -\log{n} \convas W_\infty,
\end{equation}
where $W_\infty \sim \log{\cE}$ with $\cE\sim {\mathrm Exp}(1)$. By Lemma \ref{lem:super-mg}, as $n\rightarrow \infty$,
$$
M(T_n) = |\clusterpi{1}{n}| \exp\left(-\int_0^{T_n} [2\pi^2S_2^\pi(u) + 2\pi]~ du \right) \convas M(\infty).
$$
Observe that, writing $w(u) := 2\pi^2\csusceptibilitypi{u}{2} + 2\pi$,
\begin{align*}
    &\exp\left(-\alpha(\pi)\log n\right)\,\exp\left(\int_0^{T_n}w(u)du\right) = \exp\left(-\int_0^{\log n}(\alpha(\pi) - w(u))du\right)\,\exp\left(\int_{\log n}^{T_n}w(u)du\right)\\
    &\qquad =  \exp\left(-\int_0^{\log n}(\alpha(\pi) - w(u))du\right)\,\exp\left((T_n - \log n)\alpha(\pi)\right)\,\exp\left(\int_{\log n}^{T_n}(w(u) - \alpha(\pi))du\right).
\end{align*}
By \eqref{eqn:tn-as} and the fact that $w(u) \convas \alpha(\pi)$ as $u \to \infty$, we have $\exp\left(\int_{\log n}^{T_n}(w(u) - \alpha(\pi))du\right) \convas 1$ as $n \to \infty$. Again, by \eqref{eqn:tn-as}, $\exp\left((T_n - \log n)\alpha(\pi)\right) \convas \exp\left(\alpha(\pi)W_{\infty}\right)$  as $n \to \infty$. Hence, to show the almost sure convergence of the above, it suffices to show, almost surely,
\begin{equation}\label{ctS}
    \int_0^{\infty}|\alpha(\pi) - w(u)|du < \infty.
\end{equation}
Note that
$$
\int_0^{\infty}|\alpha(\pi) - w(u)|du = \sum_{n=1}^{\infty}(T_{n+1} - T_n)|\alpha(\pi) - w(T_n)| = 2\pi^2 \sum_{n=1}^{\infty}(T_{n+1} - T_n)|\susceptibilitypi{\infty}{2} - \susceptibilitypi{n}{2}|.
$$
From Proposition \ref{prop-conrates2}, we know that there exists $\gamma>0$ such that, almost surely for all $n \in \mathbb{N}$, $|\susceptibilitypi{n}{2} - \susceptibilitypi{\infty}{2}| \le Cn^{-\gamma}$, for some $C < \infty$. Moreover, since $n(T_{n+1} - T_n)$ are iid Exponential random variables with mean one, a standard application of the Borel-Cantelli Lemma implies that $\limsup_{n \to \infty}n(T_{n+1} - T_n)/\log n < \infty$ almost surely. From these two observations, we conclude that the above sum converges almost surely, and, hence, 
$
\exp\left(-\alpha(\pi)\log n\right)\,\exp\left(\int_0^{T_n}w(u)du\right)
$
converges almost surely. This implies that
$$
n^{-\alpha(\pi)}|\clusterpi{1}{n}| = \exp\left(-\alpha(\pi)\log n\right)\,\exp\left(\int_0^{T_n}w(u)du\right)M(T_n)
$$
converges almost surely as $n \to \infty$, which completes the proof of Proposition \ref{prop:ubc1}.
\end{proof}

\subsection{Almost sure weak lower bound on component size of the first vertex}
\label{sec-weak-lower-bound-component}

The main goal of this and the next section is to prove that the limiting random variable in Proposition \ref{prop:ubc1} is almost surely {\em positive}. In this section, we provide the preliminaries for this proof: In Proposition \ref{complb} we provide an almost sure weak {\em lower bound} on the size of the first component, which has an arbitrarily small correction to the scaling exponent.
Lemma \ref{s3bd} proves an integrability condition for the third susceptibility. In this section, $C,C',C_1,C_2,\dots$ will denote universal positive constants. The following result gives the weak lower bound on $|\clusterpi{1}{n}|$ we are after:

\begin{prop}[Weak lower bound on component of vertex 1]\label{complb}
    For any $\delta > 0$, the first component satisfies
\begin{equation}
\label{eqn:806}
    \mathbb{P} \left( \liminf_{n \to \infty} \frac{|\clusterpi{1}{n}|}{n^{(1 - \delta) \alpha(\pi)}} = \infty \right) = 1.
    \end{equation}
\end{prop}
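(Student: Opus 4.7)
The plan is to work in the continuous-time embedding $(G^\pi_t)_{t\geq 0}$ and construct a non-negative supermartingale from the reciprocal $V_t := 1/|\clusterpi{1}{t}|$, from which supermartingale convergence will yield a pathwise lower bound on $X_t := |\clusterpi{1}{t}|$. Using the jump dynamics described in~\eqref{dynamics-connected-component-continuous}, a direct computation of the infinitesimal generator gives $\mathcal{L} V_t = -G(t)\,V_t$, where
$$
G(t) := \sum_\Delta r(\Delta)\,\frac{\Delta}{X_t+\Delta},
$$
with the sum running over the permissible jump sizes $\Delta$ and associated rates $r(\Delta)$ from~\eqref{dynamics-connected-component-continuous}. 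Consequently, the process $M^{\sss(-1)}(t) := V_t \exp(\int_0^t G(u)\, du)$ is a non-negative local martingale, and hence a supermartingale that converges almost surely to a finite limit. In particular $\sup_{t\geq 0} M^{\sss(-1)}(t) = C_1(\omega) < \infty$ almost surely (by Doob's maximal inequality), which yields the pathwise lower bound $X_t \geq C_1(\omega)^{-1}\exp(\int_0^t G(u)\, du)$ for all $t \geq 0$.

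The next step is to show that $\int_0^t G(u)\, du \geq (1-\delta)\alpha(\pi)\,t + o(t)$ almost surely for every $\delta>0$. Since Theorem~\ref{thm-suscep-comp} together with~\eqref{eqn:alpha-pi-susc-rel} gives $F(u) := 2\pi^2\csusceptibilitypi{u}{2}+2\pi \to \alpha(\pi)$ almost surely, it suffices to check that $\int_0^t (F(u)-G(u))\, du = o(t)$. A direct computation yields
$$
F(u) - G(u) = \sum_\Delta r(\Delta)\,\frac{\Delta^2}{X_u(X_u+\Delta)} + O\!\left(\frac{X_u^2}{N(u)}\right),
$$
and we split the mergers by jump size $\Delta$ into \emph{small} ($\Delta \leq X_u$) and \emph{large} ($\Delta > X_u$). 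The contribution of small mergers is dominated by $C\,\csusceptibilitypi{u}{3}/X_u$, which is controlled via the tree-graph bounds in Lemma~\ref{lem-higher-moment-bound}. The total integrated rate of large mergers is bounded above by $\int_0^\infty 2\pi^2 X_u |\clusterpi{\max}{u}|/N(u)\, du$, which is almost surely finite because Proposition~\ref{prop:max-as-up-bound} gives $|\clusterpi{\max}{u}| \leq N(u)^{\alpha(\pi)+\vep}$ eventually and $\alpha(\pi) < \tfrac{1}{2}$; hence only finitely many large mergers ever occur along almost every sample path.

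Combining these estimates with the Yule-process asymptotics $T_n = \log n + W_\infty + o(1)$ from the proof of Proposition~\ref{prop:ubc1} gives $|\clusterpi{1}{n}| = X_{T_n} \geq c(\omega)\, n^{(1-\delta)\alpha(\pi)}$ for every $\delta>0$ almost surely, which is the content of Proposition~\ref{complb}. The main technical hurdle will be the uniform control of $F(u)-G(u)$ in the absence of an a priori pointwise lower bound on $X_u$: the resolution is a bootstrap along the stopping times $\tau_\gamma := \inf\{u\geq 0 \colon X_u \leq e^{\gamma u}\}$, wherein the trivial starting bound $X_u\geq 1$ yields a first improvement via the supermartingale inequality, which can then be reinserted to sharpen the control on $\csusceptibilitypi{u}{3}/X_u$, iterating the procedure until it covers every $\gamma < \alpha(\pi)$.
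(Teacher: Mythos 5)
The supermartingale on $1/|\clusterpi{1}{t}|$ that you build is precisely the process $M^{\sss(-1)}(t)$ the paper uses in Section~\ref{sec-almost-sure-positive-limit} --- but there it is used to prove $\zeta_1>0$, with Proposition~\ref{complb} as a \emph{prerequisite}. The paper's actual proof of Proposition~\ref{complb} in Section~\ref{sec-weak-lower-bound-component} is quite different: it introduces a \emph{truncated} coupled process $\mathscr{C}^{\sss(K,\pi)}_1(n)$ in which, once a merger involves a component of size $>K$, only a single vertex is added. The truncation bounds the jumps by $K$, makes the relevant susceptibility $\susceptibilitypi{n}{\sss 2,K}$ trivially bounded, and produces error terms of size $O(1/|\mathscr{C}^{\sss(K,\pi)}_1|)$ that do not threaten the drift; martingale concentration (Bercu--Touati) on $\log|\mathscr{C}^{\sss(K,\pi)}_1(n)|$ then yields the lower bound, and the cost of the truncation vanishes as $K\to\infty$ because $\susceptibilitypi{\infty}{\sss 2,K}\to\susceptibilitypi{\infty}{2}$. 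This entirely sidesteps the circularity that your route runs into.

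The gap in your proposal is the bootstrap. Your control of $\int_0^t(F(u)-G(u))\,du$ hinges on the bound $F(u)-G(u)\lesssim \csusceptibilitypi{u}{3}/X_u$, but you have no a priori lower bound on $X_u$ beyond $X_u\geq 1$. You claim the trivial bound gives a ``first improvement,'' but it does not: with $X_u\geq 1$ the error is $\int_0^t\csusceptibilitypi{u}{3}\,du$, and since $\csusceptibilitypi{u}{3}\geq 1$ always this integral grows at least linearly (in fact it can grow like $\e^{2\alpha(\pi)u}$ since $\csusceptibilitypi{u}{3}\leq|\clusterpi{\max}{u}|^2$), making the exponent $\int_0^t G(u)\,du$ diverge to $-\infty$. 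Lemma~\ref{s3bd} only controls $\int_0^\infty\e^{-(\alpha(\pi)-\vep)u}\csusceptibilitypi{u}{3}\,du$ with $\vep<\alpha(\pi)\wedge(1-2\alpha(\pi))$, which always leaves a strictly positive exponent --- you can never take the weight to $\e^{-0\cdot u}$. So to close the loop with Lemma~\ref{s3bd} you would already need $X_u\gtrsim\e^{(\alpha(\pi)-\vep)u}$, which is (up to constants) the conclusion. The iteration along $\tau_\gamma$ therefore never produces $\gamma_1>\gamma_0=0$. As a secondary point, your bound on the integrated rate of large mergers only accounts for merging with $\clusterpi{\max}{u}$, not with all components of size exceeding $X_u$, though this is a minor issue compared to the circularity above. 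To fix the argument you would need either the truncation device of the paper, or an independent first moment/coupling argument yielding $X_u\gtrsim \e^{\gamma u}$ for some deterministic $\gamma\geq\alpha(\pi)-\vep_0$ with $\vep_0<\alpha(\pi)\wedge(1-2\alpha(\pi))$; neither is supplied in the proposal.
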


\begin{proof} First note that, by Proposition \ref{prop:trunc-suscep-as}, 
\begin{equation}
\label{eqn:as-convg-trunc-root}
    \mathbb{E} \Big[ |\clusterpi{o_n}{n}| \indic{|\clusterpi{o_n}{n}| \leq K} \mid G^{\pi}_n\Big] = \susceptibilitypi{n}{\sss 2,K} \convas \mathbb{E} \left[ |\clusterpi{\sss\varnothing}{\infty}| \indic{|\clusterpi{\sss\varnothing}{\infty}| \le K}\right].
\end{equation}
where $\clusterpi{\sss\varnothing}{\infty}$ is the percolated component size of the root in the local limit.
Recall the enumeration of components of $G^{\pi}_n$, in descending order of their oldest vertex member, given by $\clusterpi{\sss \geq 1}{n}, \dots, \clusterpi{\sss \ge n}{n}$, with $\clusterpi{\sss \geq 1}{n} = \clusterpi{1}{n}$. Let $\mathscr{F}_n$ denote the natural filtration of the graph process $(G^{\pi}_n)_{n\geq1}$.
\smallskip

Take any $K \geq 1$. Define a coupled process $\mathscr{C}^{\sss (K,\pi)}_{\sss \ge 1}(n)=\mathscr{C}^{\sss (K,\pi)}_{1}(n), \mathscr{C}^{\sss (K,\pi)}_{\sss \ge 2}(n), \dots, \mathscr{C}^{\sss (K,\pi)}_{\sss \ge n}(n),$ adapted to $\mathscr{F}_n$, as follows:

\begin{itemize}

\item[1.] Let $\alpha_K := \inf \{ n \geq 1: |\clusterpi{1}{n}| \geq K + 1 \}$. The two processes evolve synchronously for $n \leq \alpha_K$.

\item[2.] For $n > \alpha_K$, if the $(n+1)^{\rm th}$ vertex attempts to connect to $\mathscr{C}^{\sss (K,\pi)}_{1}(n)$ and a $\mathscr{C}^{\sss (K,\pi)}_{\sss \ge j}(n)$ for $j \geq 2$ with $|\mathscr{C}^{\sss (K,\pi)}_{\sss \ge j}(n)| > K$, then
   \begin{equation*}
       \mathscr{C}^{\sss (K,\pi)}_{1}(n+1) = \mathscr{C}^{\sss (K,\pi)}_{1}(n) \cup \{n+1\} \quad \text{(with associated edge)},
   \end{equation*}
and $\mathscr{C}^{\sss (K,\pi)}_{\sss \ge j}(n+1) = \mathscr{C}^{\sss (K,\pi)}_{\sss \ge j}(n)$ for all $j \geq 2$. Otherwise, the dynamics are the same as those of $\{\clusterpi{1}{n}, \dots, \clusterpi{\sss \ge n}{n}\}$.
\end{itemize}
\smallskip

It can be verified that $|\mathscr{C}^{\sss (K,\pi)}_{1}(n)| \leq |\clusterpi{1}{n}|$ almost surely, for all $n \geq 0$. Moreover, for all $n \geq 0$,
\begin{equation}
    \sum_{j=1}^{n} |\mathscr{C}^{\sss (K,\pi)}_{\sss \ge j}(n)|^2 \leq \sum_{j=1}^{n} |\clusterpi{\sss \ge j}{n}|^2,
\end{equation}
and
\begin{equation}
    \susceptibilitypi{n}{\sss 2,K} := \frac{1}{n}\sum_{j=1}^{n} |\clusterpi{\sss \ge j}{n}|^2 \indic{|\clusterpi{\sss \ge j}{n}| \leq K} = \frac{1}{n}\sum_{j=1}^{n} |\mathscr{C}^{\sss (K,\pi)}_{\sss \ge j}(n)|^2 \indic{|\mathscr{C}^{\sss (K,\pi)}_{\sss \ge j}(n)| \leq K},
\end{equation}
where the second equality holds since the dynamics of connected components of sizes at most $K$ are {\em unaffected} by our change in dynamics.

Observing that, on the event $\{n \geq \alpha_K\} \in \mathscr{F}_n$,
we can expand
\begin{align*}
    &\mathbb{E} \left[|\mathscr{C}^{\sss (K,\pi)}_{1}(n+1)| - |\mathscr{C}^{\sss (K,\pi)}_{1}(n)|\mid \mathscr{F}_n \right]\\
    &\quad = \sum_{j=2}^{n} \frac{2\pi^2}{n^2} \left( |\mathscr{C}^{\sss (K,\pi)}_{1}(n)||\mathscr{C}^{\sss (K,\pi)}_{\sss \ge j}(n)| (|\mathscr{C}^{\sss (K,\pi)}_{\sss \ge j}(n)|\indic{|\mathscr{C}^{\sss (K,\pi)}_{\sss \ge j}(n)| \leq K} + 1) \right)\\
     &\qquad + \frac{\pi^2 |\mathscr{C}^{\sss (K,\pi)}_{1}(n)|^2}{n^2} + \frac{2\pi (1-\pi)}{n} |\mathscr{C}^{\sss (K,\pi)}_{1}(n)|\\
     &\quad = \sum_{j=1}^{n} \frac{2\pi^2}{n^2} \left( |\mathscr{C}^{\sss (K,\pi)}_{1}(n)||\mathscr{C}^{\sss (K,\pi)}_{\sss \ge j}(n)| (|\mathscr{C}^{\sss (K,\pi)}_{\sss \ge j}(n)|\indic{|\mathscr{C}^{\sss (K,\pi)}_{\sss \ge j}(n)| \leq K} + 1) \right)\\
     &\qquad - \frac{2\pi^2}{n^2} \left( |\mathscr{C}^{\sss (K,\pi)}_{1}(n)|^2(|\mathscr{C}^{\sss (K,\pi)}_{1}(n)|\indic{|\mathscr{C}^{\sss (K,\pi)}_{1}(n)| \leq K} + 1)\right)
      + \frac{\pi^2 |\mathscr{C}^{\sss (K,\pi)}_{1}(n)|^2}{n^2} + \frac{2\pi (1-\pi)}{n} |\mathscr{C}^{\sss (K,\pi)}_{1}(n)|\\
     &\quad = \frac{2\pi^2}{n} \susceptibilitypi{n}{\sss 2,K} |\mathscr{C}^{\sss (K,\pi)}_{1}(n)| + \frac{2\pi^2}{n} |\mathscr{C}^{\sss (K,\pi)}_{1}(n)|- \frac{2\pi^2}{n^2} \left( |\mathscr{C}^{\sss (K,\pi)}_{1}(n)|^2(|\mathscr{C}^{\sss (K,\pi)}_{1}(n)|\indic{|\mathscr{C}^{\sss (K,\pi)}_{1}(n)| \leq K} + 1)\right)\\
     &\qquad + \frac{\pi^2 |\mathscr{C}^{\sss (K,\pi)}_{1}(n)|^2}{n^2} + \frac{2\pi (1-\pi)}{n} |\mathscr{C}^{\sss (K,\pi)}_{1}(n)|.
\end{align*}

Simplifying further, we obtain
\begin{multline}
\label{eqn:S}
\mathbb{E} \left[|\mathscr{C}^{\sss (K,\pi)}_{1}(n+1)| - |\mathscr{C}^{\sss (K,\pi)}_{1}(n)|\mid \mathscr{F}_n \right]\\
    =\left( 2\pi^2 \susceptibilitypi{n}{\sss 2,K} + 2\pi \right) \frac{|\mathscr{C}^{\sss (K,\pi)}_{1}(n)|}{n} - \left( \frac{2\pi^2}{n^2} |\mathscr{C}^{\sss (K,\pi)}_{1}(n)|^3 \indic{|\mathscr{C}^{\sss (K,\pi)}_{1}(n)| \leq K} + \frac{\pi^2 |\mathscr{C}^{\sss (K,\pi)}_{1}(n)|^2}{n^2} \right).
\end{multline}

In the following, we work conditionally on $\mathscr{F}_N$, where $N \geq \alpha_K \vee 2 + 1$ is any stopping time.
\smallskip

Define
\begin{equation}
    X_n := \log |\mathscr{C}^{\sss (K,\pi)}_{1}(n)|.
\end{equation}

Consider the martingale
\begin{equation*}
    \Psi_n := X_n - \sum_{j=N}^{n} \mathbb{E} [X_j - X_{j-1} \mid \mathscr{F}_{j-1}], \quad n \geq N.
\end{equation*}

Note that
\begin{equation*}
    |X_n - X_{n-1}| = \log \left( \frac{|\mathscr{C}^{\sss (K,\pi)}_{1}(n)|}{|\mathscr{C}^{\sss (K,\pi)}_{1}(n-1)|} \right) \leq \frac{|\mathscr{C}^{\sss (K,\pi)}_{1}(n)| - |\mathscr{C}^{\sss (K,\pi)}_{1}(n-1)|}{|\mathscr{C}^{\sss (K,\pi)}_{1}(n-1)|} \leq \frac{K}{|\mathscr{C}^{\sss (K,\pi)}_{1}(n-1)|}.
\end{equation*}

Thus,
\begin{equation*}
    |\Psi_n - \Psi_{n-1}| \leq \frac{2K}{|\mathscr{C}^{\sss (K,\pi)}_{1}(n-1)|}.
\end{equation*}

We will derive a concentration inequality for the martingale $(\Psi_n)_{n \ge N}$. Although the jump sizes of this martingale are bounded by $K$, for our purposes, we need more refined control than that provided by the Azuma-Hoeffding inequality, and, rather, we will rely on estimates on the box and quadratic variations of the martingale. For any $n \geq N$, the box variation of $(\Psi_n)_{n\geq 1}$ can be bounded as
\begin{align*}
    [\Psi]_n &:= \sum_{j=N}^{n} |\Psi_j - \Psi_{j-1}|^2 \leq \frac{2K}{|\mathscr{C}^{\sss (K,\pi)}_{1}(N-1)|} \sum_{j=N}^{n} |\Psi_j - \Psi_{j-1}|\\
    &\le \frac{2K}{|\mathscr{C}^{\sss (K,\pi)}_{1}(N-1)|}\left[ X_n + \sum_{j=N}^{n} \mathbb{E} [X_j - X_{j-1} \mid \mathscr{F}_{j-1}] \right],
    \end{align*}
where we use that $X_j$ is almost surely non-decreasing, so that
    \[
    |\Psi_j - \Psi_{j-1}|\leq X_j - X_{j-1}+ \mathbb{E} [X_j - X_{j-1} \mid \mathscr{F}_{j-1}],
    \]
combined with a telescoping sum identity. Thus,
\begin{align*}
    [\Psi]_n &\le \frac{2K}{|\mathscr{C}^{\sss (K,\pi)}_{1}(N-1)|}\left[ \Psi_n + 2 \sum_{j=N}^{n} \mathbb{E} [X_j - X_{j-1} \mid \mathscr{F}_{j-1}] \right]\\
    &\le \frac{2K}{|\mathscr{C}^{\sss (K,\pi)}_{1}(N-1)|}\left[ \Psi_n + 2 \sum_{j=N}^{n} \frac{1}{|\mathscr{C}^{\sss (K,\pi)}_{1}(j-1)|} \mathbb{E} [|\mathscr{C}^{\sss (K,\pi)}_{1}(j)| - |\mathscr{C}^{\sss (K,\pi)}_{1}(j-1)| \mid \mathscr{F}_{j-1}] \right]. 
\end{align*}
By using \eqref{eqn:S}, where we note that the $|\mathscr{C}^{\sss (K,\pi)}_{1}(j-1)|$ factors cancel, we obtain
\begin{align*}
[\Psi]_n &\le \frac{2K}{|\mathscr{C}^{\sss (K,\pi)}_{1}(N-1)|}\left[ \Psi_n + 2 \sum_{j=N}^{n} \frac{1}{j-1} \left(2\pi^2 \susceptibilitypi{j-1}{\sss 2,K} + 2\pi\right) \right].
\end{align*}
Using that 
    \eqn{
    \susceptibilitypi{j-1}{\sss 2,K}\leq \frac{K}{j-1}\sum_{i\geq 1} |\mathscr{C}^{\sss (K,\pi)}_{\sss \geq i}(j-1)|=K,
    }
we obtain

\begin{align*}
[\Psi]_n &\le \frac{2K}{|\mathscr{C}^{\sss (K,\pi)}_{1}(N-1)|}\left[ \Psi_n + 2 \sum_{j=N}^{n} \frac{1}{j-1} \left(2\pi^2 K + 2\pi\right) \right]\\
&\le \frac{2K}{|\mathscr{C}^{\sss (K,\pi)}_{1}(N-1)|}\left[ \Psi_n + C K \log n \right].
\end{align*}

Similarly, we bound the quadratic variation of the martingale as
\begin{align*}
    \langle \Psi \rangle_n &:= \sum_{j=N}^{n} \mathbb{E} \left[ (\Psi_j - \Psi_{j-1})^2 \mid \mathscr{F}_{j-1} \right]\\
    & \le \frac{2K}{|\mathscr{C}^{\sss (K,\pi)}_{1}(N-1)|}\sum_{j=N}^{n} \mathbb{E} [X_j - X_{j-1} \mid \mathscr{F}_{j-1}] \le \frac{2CK^2}{|\mathscr{C}^{\sss (K,\pi)}_{1}(N-1)|}\log n.
\end{align*}
Now, we will use a concentration inequality in \cite{BerTou08}*{Theorem 2.1}, which says that for any martingale sequence $(M_n)_{n \ge 1}$ adapted to some filtration $(\mathcal{F}_n)_{n \ge 1}$ with box variation $([M]_n)_{n \ge 1}$ and quadratic variation $(\langle M \rangle_n)_{n \ge 1}$, and any positive $x,y$,
\begin{equation}\label{eq:martconc}
    \prob\left(|M_n| \ge x, \, [M]_n + \langle M \rangle_n \le y\right) \le 2e^{-x^2/(2y)}.
\end{equation}

Using \eqref{eq:martconc}, along with the above bounds on the box and quadratic variations of the martingale $(\Psi_n)_{n \ge 1}$, we obtain
\begin{align}
\label{concentration-bound-1}
    &\mathbb{P} \big( \delta \log n \leq |\Psi_n| \leq 2\delta \log n \mid \mathscr{F}_N \big)\\
    &\le \mathbb{P}\left(|\Psi_n| \geq \delta \log n, \, [\Psi]_n + \langle \Psi \rangle_n \le \frac{2K}{|\mathscr{C}^{\sss (K,\pi)}_{1}(N-1)|}(2\delta \log n + 2CK \log n)~\Big|~\mathscr{F}_N\right)\nn\\
    & \leq 2 \exp \left( - \frac{\delta^2 |\mathscr{C}^{\sss (K,\pi)}_{1}(N-1)| (\log n)^2}{8 K (\delta + C K) (\log n)} \right)\nn\\
    & = 2\, n^{- \frac{\delta^2}{8K(\delta + C K)} |\mathscr{C}^{\sss (K,\pi)}_{1}(N-1)|}.\nn
\end{align}
In general, for any $l \in \mathbb{N}_0$,
\begin{equation*}
    \mathbb{P} \left( 2^l \delta \log n \leq |\Psi_n| \leq 2^{l+1} \delta \log n \mid \mathscr{F}_N \right)
    \leq 2\, n^{- \frac{(2^l \delta)^2}{8K(2^l \delta + C K)}|\mathscr{C}^{\sss (K,\pi)}_{1}(N-1)|}\leq 
    n^{-\big(\frac{2^l \delta}{16K}\wedge \frac{(2^l \delta)^2}{16C K^2}\big)|\mathscr{C}^{\sss (K,\pi)}_{1}(N-1)|}.
\end{equation*}
By a union bound, for any $n \geq N,$
\begin{equation*}
    \mathbb{P} \left( |\Psi_n| \geq \delta \log n \mid \mathscr{F}_N \right) \leq C_1(\delta, K) n^{-\big(\frac{\delta}{16K}\wedge \frac{\delta^2}{16C K^2}\big)|\mathscr{C}^{\sss (K,\pi)}_{1}(N-1)|},
\end{equation*}
where the constant $C(\delta,K)$ depends on $\delta$ and $K$.
\smallskip

Choose
\begin{equation*}
    N = \inf \left\{ n \geq \alpha_K \vee 2 + 1 \colon |\mathscr{C}^{\sss (K,\pi)}_{1}(N-1)| \geq \frac{32K}{\delta}\vee \frac{32C K^2}{\delta^2}\right\}.
\end{equation*}
Then,
\begin{equation*}
    \sum_{n=N}^{\infty} \mathbb{P} \big(|\Psi_n| \geq \delta \log n \mid \mathscr{F}_N \big) < \infty.
\end{equation*}
Hence, by the Borel-Cantelli lemma, almost surely, for all sufficiently large $n$,
\begin{equation}\label{eqn:star}
    -\delta \log n \leq \log |\mathscr{C}^{\sss (K,\pi)}_{1}(n)| - \sum_{j=1}^{n} \mathbb{E} \left[ \log |\mathscr{C}^{\sss (K,\pi)}_{1}(j)| - \log |\mathscr{C}^{\sss (K,\pi)}_{1}(j-1)| \mid \mathscr{F}_{j-1} \right] \leq \delta \log n.
\end{equation}
Thus, to obtain a lower bound on $|\clusterpi{1}{n}|$ in the almost sure sense, it suffices to obtain a lower bound on
\begin{equation*}
    \sum_{j=1}^{n} \mathbb{E} \left[ \log |\mathscr{C}^{\sss (K,\pi)}_{1}(j)| - \log |\mathscr{C}^{\sss (K,\pi)}_{1}(j-1)| \mid \mathscr{F}_{j-1} \right].
\end{equation*}
\smallskip

Recalling $X_n := \log |\mathscr{C}^{\sss (K,\pi)}_{1}(n)|$, we note that
\begin{equation*}
    X_{n+1} - X_{n} \geq \frac{1}{|\mathscr{C}^{\sss (K,\pi)}_{1}(n)| + K + 1} (|\mathscr{C}^{\sss (K,\pi)}_{1}(n+1)| - |\mathscr{C}^{\sss (K,\pi)}_{1}(n)|).
\end{equation*}
By \eqref{eqn:S}, we obtain
\begin{equation}\label{lb1}
    \mathbb{E} \left[ X_{n+1} - X_n \mid \mathscr{F}_n \right] \geq \frac{1}{n} \frac{(2\pi^2 \susceptibilitypi{n}{\sss 2,K} + 2\pi) |\mathscr{C}^{\sss (K,\pi)}_{1}(n)|}{|\mathscr{C}^{\sss (K,\pi)}_{1}(n)| + K + 1}
    - \frac{1}{n} \left( \frac{2\pi^2}{n} |\mathscr{C}^{\sss (K,\pi)}_{1}(n)|^2 + \frac{\pi^2}{n} |\mathscr{C}^{\sss (K,\pi)}_{1}(n)| \right).
\end{equation}
Define
\begin{equation*}
    \Theta_n^K := \sum_{j=1}^{n} \left( \frac{2\pi^2}{j^2} |\mathscr{C}^{\sss (K,\pi)}_{1}(j)|^2 + \frac{\pi^2}{j^2} |\mathscr{C}^{\sss (K,\pi)}_{1}(j)| \right), \quad n \geq 1.
\end{equation*}
Take any
\begin{equation*}
    \varepsilon \in \left( 0, C \wedge \frac{\sqrt{8\pi^2 - 8\pi + 1}}{6} \right).
\end{equation*}

Fix $K \geq 1$ such that, with 
$
    \susceptibilitypi{\infty}{\sss 2,K} := \mathbb{E} \left[ |\clusterpi{\sss\varnothing}{\infty}| \indic{|\clusterpi{\sss\varnothing}{\infty}| \leq K}\right]
$, where we recall that $|\clusterpi{\sss\varnothing}{\infty}|$ is the percolated component size of the root in the local limit, and
\begin{equation*}
    \susceptibilitypi{\infty}{2} := \mathbb{E} [|\clusterpi{\sss\varnothing}{\infty}|] = \frac{1 - 4\pi - \sqrt{8\pi^2 - 8\pi + 1}}{4\pi^2},
\end{equation*}
we have
\begin{equation*}
    \susceptibilitypi{\infty}{2} - \susceptibilitypi{\infty}{\sss 2,K} < \varepsilon.
\end{equation*}
For $L > 0$, define
\begin{equation*}
    T_L := \inf \left\{ n \geq \alpha_K + L \colon \left| 2\pi^2 \susceptibilitypi{n}{\sss 2,K} + 2\pi - \alpha(\pi) \right| \geq 2\varepsilon \right\}.
\end{equation*}
As $\susceptibilitypi{n}{\sss 2,K} \convas \susceptibilitypi{\infty}{\sss 2,K}$,
\begin{equation}\label{lb2}
    \lim_{L \to \infty} \mathbb{P} \left( T_L < \infty \right) = 0.
\end{equation}
By the almost sure weak upper bound on the maximal component in Proposition \ref{prop:max-as-up-bound},
\begin{equation}\label{lb3}
    \limsup_{n \to \infty}\Theta_n^K \le 
    \sum_{j=1}^{\infty} \left( \frac{2\pi^2}{j^2} |\mathscr{C}^{\sss (K,\pi)}_{1}(j)|^2 + \frac{\pi^2}{j^2} |\mathscr{C}^{\sss (K,\pi)}_{1}(j)| \right) < \infty \quad \text{almost surely}.
\end{equation}

Define
\begin{equation*}
    \tau_\varepsilon := \inf \left\{ n \geq 1 \colon \frac{|\mathscr{C}^{\sss (K,\pi)}_{1}(n)|}{|\mathscr{C}^{\sss (K,\pi)}_{1}(n)| + K + 1} \geq 1 - \varepsilon \right\}.
\end{equation*}
Then, for any $L > 0$, on the event $\{ n > \tau_\varepsilon \vee (\alpha_K + L) \} \cap \{ T_L = \infty \}$, by \eqref{lb1},
\begin{align*}
    \sum_{j=1}^{n} \mathbb{E} \left[ X_j - X_{j-1} \mid \mathscr{F}_{j-1} \right] &\geq \sum_{j=\tau_\varepsilon \vee (\alpha_K + L)}^{n-1} \frac{1}{j} \frac{(2\pi^2 \susceptibilitypi{j-1}{\sss 2,K} + 2\pi) |\mathscr{C}^{\sss (K,\pi)}_{1}(j)|}{|\mathscr{C}^{\sss (K,\pi)}_{1}(j)| + K + 1} - \Theta_{n-1}^K \\
    &\geq (1 - \varepsilon) \sum_{j=\tau_\varepsilon \vee (\alpha_K + L)}^{n-1} \frac{\alpha(\pi) - 2\varepsilon}{j} - \Theta_{n-1}^K\\
    &\geq (1 - \varepsilon)(\alpha(\pi) - 2\varepsilon)\int_1^n\frac{1}{x}dx - \Theta_{n-1}^K - (1 - \varepsilon) \sum_{j=1}^{\tau_\varepsilon \vee (\alpha_K + L) - 1} \frac{\alpha(\pi) - 2\varepsilon}{j}\\
    &= (1 - \varepsilon)(\alpha(\pi) - 2\varepsilon)\log n - \Theta_{n-1}^K - (1 - \varepsilon) \sum_{j=1}^{\tau_\varepsilon \vee (\alpha_K + L) - 1} \frac{\alpha(\pi) - 2\varepsilon}{j}.
\end{align*}
Using this in conjunction with \eqref{eqn:star}, on the event $\{ n > \tau_\varepsilon \vee (\alpha_K + L) \} \cap \{ T_L = \infty \}$, we obtain, for all sufficiently large $n$,
\begin{equation*}
    \log |\mathscr{C}^{\sss (K,\pi)}_{1}(n)| \geq \left[ (1 - \varepsilon) (\alpha(\pi) - 2\varepsilon) - \varepsilon \right] \log n - \Theta_{n-1}^K - (1 - \varepsilon) \sum_{j=1}^{\tau_\varepsilon \vee (\alpha_K + L) - 1} \frac{\alpha(\pi) - 2\varepsilon}{j}.
\end{equation*}
In particular, by \eqref{lb3}, for any $L > 0$ and $\eps>0$,
\begin{equation}
    \mathbb{P} \left( \limsup_{n \to \infty} \frac{|\mathscr{C}^{\sss (K,\pi)}_{1}(n)|}{n^{(1 - \varepsilon) (\alpha(\pi) - 2\varepsilon) - \varepsilon}} > 0 \right) \geq \mathbb{P} \left( T_L = \infty \right).
\end{equation}
Taking $L \to \infty$ and using \eqref{lb2}, recalling $|\mathscr{C}^{\sss (K,\pi)}_{1}(n)| \leq |\clusterpi{1}{n}|$ and noting that $\varepsilon \in \left( 0, C \wedge \frac{\sqrt{8\pi^2 - 8\pi + 1}}{6} \right)$ is arbitrary, we conclude that, for any $\delta>0$,
\begin{equation*}
    \mathbb{P} \left( \liminf_{n \to \infty} \frac{|\clusterpi{1}{n}|}{n^{(1 - \delta) \alpha(\pi)}} = \infty \right) = 1.
\end{equation*}
This completes the proof of Proposition \ref{complb}.
\end{proof}

A key technical step in our analysis concerns the behavior of the {\em third susceptibility} in continuous time, which we recall from \eqref{susdef} and investigate next:

\begin{lem}[Integrated third susceptibility]
\label{s3bd}
    For any $\vep \in (0, \alpha(\pi) \wedge (1- 2\alpha(\pi)))$, almost surely,
    $$
    \int_0^{\infty}\e^{-(\alpha(\pi) - \vep)t} \csusceptibilitypi{t}{3} dt < \infty.
    $$
\end{lem}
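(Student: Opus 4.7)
The approach is to bound the \emph{expectation} of the integral via Fubini; finiteness of that expectation immediately gives the almost-sure statement. Set $\gamma := \alpha(\pi) - \vep > 0$. Because $\csusceptibilitypi{t}{3}$ is constant on each Yule interval $[T_n, T_{n+1})$ with value $\susceptibilitypi{n}{3}$, where $T_n := \inf\{t\geq 0 \colon N(t) = n\}$, I will decompose the integral as $\sum_{n\geq 1} \susceptibilitypi{n}{3}\, \widetilde{I}_n$, with $\widetilde{I}_n := \int_{T_n}^{T_{n+1}} \e^{-\gamma t}\,\dif t$. In the continuous-time construction, the graph $G^{\pi}_n$ is measurable with respect to the edge-choice randomness, while the $(T_n)_{n\geq 1}$ are built from an independent family of $\mathrm{Exp}(k)$ holding times. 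Thus $\susceptibilitypi{n}{3}$ and $\widetilde{I}_n$ are independent, and Fubini yields
\begin{equation*}
    \E\bigg[\int_0^\infty \e^{-\gamma t}\csusceptibilitypi{t}{3}\,\dif t\bigg] = \sum_{n\geq 1} \E[\susceptibilitypi{n}{3}]\,\E[\widetilde{I}_n].
\end{equation*}

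I will then estimate each factor. The Laplace transform of a sum of independent exponentials gives
$\E[\e^{-\gamma T_n}] = \prod_{k=1}^{n-1} k/(k+\gamma) = \Gamma(n)\Gamma(1+\gamma)/\Gamma(n+\gamma) \sim \Gamma(1+\gamma) n^{-\gamma}$,
and a telescoping difference turns this into $\E[\widetilde{I}_n] = O(n^{-\gamma-1})$. For the susceptibility factor, I rewrite $\susceptibilitypi{n}{3} = n^{-1}\sum_{v=1}^n |\clusterpi{\sss \geq v}{n}|^3$ and apply Lemma \ref{lem-higher-moment-bound} with $k=3$ and $\beta = \alpha(\pi) + \vep'$ for a small $\vep'>0$ to be chosen. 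This produces $\E[\susceptibilitypi{n}{3}] \leq C n^{-1} \sum_{v=1}^n (n/v)^{3\beta}$, which is $O(n^{3\beta - 1})$ when $3\beta > 1$ (using the convergence of $\sum_v v^{-3\beta}$) and $O(1)$ otherwise, with a logarithmic factor only at the boundary $3\beta = 1$.

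Combining the two bounds yields a summand of order $n^{3\beta - 2 - \gamma}$ in the critical regime $3\beta \geq 1$. Summability therefore reduces to $3\beta < 1 + \gamma$, equivalently $2\alpha(\pi) + 3\vep' + \vep < 1$. The hypothesis $\vep < 1 - 2\alpha(\pi)$ leaves strictly positive room to choose $\vep'$ small enough, which completes the argument. The main subtlety lies in the factorization step above: it rests on the construction-level independence between the edge-building randomness (which generates $G^{\pi}_n$) and the Yule jump times, combined with access to Lemma \ref{lem-higher-moment-bound} at an exponent $\beta$ arbitrarily close to $\alpha(\pi)$---which is precisely what the tree-graph inequality of that lemma delivers.
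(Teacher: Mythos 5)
Your proof is correct and takes a genuinely different route from the paper's. The paper applies the generator $\mathcal{L}$ of the continuous-time Markov process to $\csusceptibilitypi{t}{3}$, obtaining the identity $\mathcal{L}\csusceptibilitypi{t}{3} = (3\alpha(\pi)-1)\csusceptibilitypi{t}{3} + 6\pi^2(\csusceptibilitypi{t}{2}-\susceptibilitypi{\infty}{2})\csusceptibilitypi{t}{3} + R(t)$, builds the discounted process $Z(t) = \e^{-(\alpha(\pi)-\vep-\delta)t}\csusceptibilitypi{t}{3}$, derives a differential inequality, introduces a stopping time at which $\csusceptibilitypi{t}{2}$ exits an $L$-cap or a $\delta$-neighbourhood of $\susceptibilitypi{\infty}{2}$, bounds $\E[Z(t\wedge\tau)]$, integrates, and finally sends $L,M\to\infty$, relying on the almost-sure convergence $\csusceptibilitypi{t}{2}\convas\susceptibilitypi{\infty}{2}$ from Theorem \ref{thm-suscep-comp} so that the stopping has vanishing probability of triggering. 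Your argument instead controls the \emph{expectation} of the integral directly: you time-change via the Yule embedding, factor using the independence between the Yule holding times and the attachment randomness, and feed in the moment bounds of Lemma \ref{lem-higher-moment-bound} with $k=3$ and $\beta = \alpha(\pi)+\vep'$. This is shorter and more elementary---it does not invoke the stochastic-approximation machinery behind Theorem \ref{thm-suscep-comp}---and yields the quantitatively stronger conclusion $\E\bigl[\int_0^\infty \e^{-(\alpha(\pi)-\vep)t}\csusceptibilitypi{t}{3}\,\dif t\bigr]<\infty$, from which a.s.\ finiteness is immediate. The independence claim is sound: the discrete graph $G^{\pi}_n$ is a function of the attachment randomness alone, while $T_n, T_{n+1}$ are functions of the independent exponential holding times, so $\susceptibilitypi{n}{3}$ and the interval integral are indeed independent. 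The Laplace-transform telescoping gives $\E\bigl[\int_{T_n}^{T_{n+1}}\e^{-\gamma t}\,\dif t\bigr]=O(n^{-\gamma-1})$, the rewriting $\susceptibilitypi{n}{3}=n^{-1}\sum_{v\le n}|\clusterpi{\sss\geq v}{n}|^3$ is correct, and the exponent bookkeeping checks out: the hypothesis $\vep<1-2\alpha(\pi)$ leaves precisely the slack needed to place $3\beta<1+\gamma$, with the case $3\beta<1$ handled trivially and the boundary $3\beta=1$ avoidable by the choice of $\vep'$.
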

\begin{proof}
    The action of the generator of the associated continuous-time Markov process on $\csusceptibilitypi{t}{3}$ is given by
    \begin{align*}
        &\mathcal{L} \csusceptibilitypi{t}{3} = N(t) \left(\frac{1}{N(t) + 1} - \frac{1}{N(t)}\right)\sum_{i=1}^{N(t)}|\clusterpi{\sss \geq i}{t}|^3\\
        & \quad + \sum_{i < j} \left((|\clusterpi{\sss \geq i}{t}| + |\clusterpi{\sss \geq j}{t}| + 1)^3 - |\clusterpi{\sss \geq i}{t}|^3 - |\clusterpi{\sss \geq j}{t}|^3\right)\frac{2\pi^2|\clusterpi{\sss \geq i}{t}||\clusterpi{\sss \geq j}{t}|}{N(t)^2}\\
        &\quad + \sum_{i} \left((|\clusterpi{\sss \geq i}{t}|  + 1)^3 - |\clusterpi{\sss \geq i}{t}|^3\right)\left(\frac{2\pi(1-\pi)|\clusterpi{\sss \geq i}{t}|}{N(t)} + \frac{\pi^2|\clusterpi{\sss \geq i}{t}|^2}{N(t)^2}\right) + (1-\pi)^2\\
        &= -\frac{N(t)}{N(t) + 1}\csusceptibilitypi{t}{3}\\
        & \quad + \sum_{i < j} \left(3|\clusterpi{\sss \geq i}{t}|^2( |\clusterpi{\sss \geq j}{t}| + 1) + 3|\clusterpi{\sss \geq i}{t}|( |\clusterpi{\sss \geq j}{t}| + 1)^2\right)\frac{2\pi^2|\clusterpi{\sss \geq i}{t}||\clusterpi{\sss \geq j}{t}|}{N(t)^2}\\
        &\quad + \sum_{i < j} \left(( |\clusterpi{\sss \geq j}{t}| + 1)^3 - |\clusterpi{\sss \geq j}{t}|^3\right)\frac{2\pi^2|\clusterpi{\sss \geq i}{t}||\clusterpi{\sss \geq j}{t}|}{N(t)^2}\\
        &\quad + \sum_{i} \left((|\clusterpi{\sss \geq i}{t}|  + 1)^3 - |\clusterpi{\sss \geq i}{t}|^3\right)\left(\frac{2\pi(1-\pi)|\clusterpi{\sss \geq i}{t}|}{N(t)} + \frac{\pi^2|\clusterpi{\sss \geq i}{t}|^2}{N(t)^2}\right) + (1-\pi)^2\\
        & = -\frac{N(t)}{N(t) + 1}\csusceptibilitypi{t}{3}\\
        & \quad + 3\sum_{i < j} \left(|\clusterpi{\sss \geq i}{t}| |\clusterpi{\sss \geq j}{t}| + |\clusterpi{\sss \geq j}{t}|^2 + |\clusterpi{\sss \geq i}{t}| + 1 + 2|\clusterpi{\sss \geq j}{t}|\right)\frac{2\pi^2|\clusterpi{\sss \geq i}{t}|^2|\clusterpi{\sss \geq j}{t}|}{N(t)^2}\\
        &\quad + \sum_{i < j} \left( 3|\clusterpi{\sss \geq j}{t}|^2 +  3|\clusterpi{\sss \geq j}{t}| + 1\right)\frac{2\pi^2|\clusterpi{\sss \geq i}{t}||\clusterpi{\sss \geq j}{t}|}{N(t)^2}\\
        &\quad + \sum_{i} \left(3|\clusterpi{\sss \geq i}{t}|^2 +  3|\clusterpi{\sss \geq i}{t}| + 1\right)\left(\frac{2\pi|\clusterpi{\sss \geq i}{t}|}{N(t)} + \frac{\pi^2|\clusterpi{\sss \geq i}{t}|^2}{N(t)^2}\right) + (1-\pi)^2.
    \end{align*}
    Simplifying each of the three above sums, we obtain
    \begin{align*}
        &3\sum_{i < j} \left(|\clusterpi{\sss \geq i}{t}| |\clusterpi{\sss \geq j}{t}| + |\clusterpi{\sss \geq j}{t}|^2 + |\clusterpi{\sss \geq i}{t}| + 1 + 2|\clusterpi{\sss \geq j}{t}|\right)\frac{2\pi^2|\clusterpi{\sss \geq i}{t}|^2|\clusterpi{\sss \geq j}{t}|}{N(t)^2}\\
        &= \left(6\pi^2 \csusceptibilitypi{t}{2} + 3\pi^2\right) \csusceptibilitypi{t}{3} + 6\pi^2(\csusceptibilitypi{t}{2})^2 + 3\pi^2\csusceptibilitypi{t}{2}
        - \frac{6\pi^2\csusceptibilitypi{t}{5} + 9\pi^2\csusceptibilitypi{t}{4} + 3\pi^2 \csusceptibilitypi{t}{3}}{N(t)},
    \end{align*}
    \begin{align*}
        &\sum_{i < j} \left( 3|\clusterpi{\sss \geq j}{t}|^2 +  3|\clusterpi{\sss \geq j}{t}| + 1\right)\frac{2\pi^2|\clusterpi{\sss \geq i}{t}||\clusterpi{\sss \geq j}{t}|}{N(t)^2}\\
        &\quad = 3\pi^2\csusceptibilitypi{t}{3} + 3\pi^2\csusceptibilitypi{t}{2} + \pi^2 - \sum_{i} \left(3|\clusterpi{\sss \geq i}{t}|^2 +  3|\clusterpi{\sss \geq i}{t}| + 1\right)\frac{\pi^2|\clusterpi{\sss \geq i}{t}|^2}{N(t)^2},
    \end{align*}
    and
    \begin{align*}
        &\sum_{i} \left(3|\clusterpi{\sss \geq i}{t}|^2 +  3|\clusterpi{\sss \geq i}{t}| + 1\right)\left(\frac{2\pi(1-\pi)|\clusterpi{\sss \geq i}{t}|}{N(t)} + \frac{\pi^2|\clusterpi{\sss \geq i}{t}|^2}{N(t)^2}\right)\\
        & \quad = 6\pi(1-\pi) \csusceptibilitypi{t}{3} + 6\pi(1-\pi)\csusceptibilitypi{t}{2} + 2\pi(1-\pi)
        + \sum_{i} \left(3|\clusterpi{\sss \geq i}{t}|^2 +  3|\clusterpi{\sss \geq i}{t}| + 1\right)\frac{\pi^2|\clusterpi{\sss \geq i}{t}|^2}{N(t)^2}.
    \end{align*}
    Using the above, we obtain
    \begin{equation}\label{gens3}
        \mathcal{L} \csusceptibilitypi{t}{3} = (3\alpha(\pi) - 1) \csusceptibilitypi{t}{3} + 6\pi^2(\csusceptibilitypi{t}{2} - \susceptibilitypi{\infty}{2})\csusceptibilitypi{t}{3} + R(t),
    \end{equation}
    where
    \begin{align*}
        R(t) &= \frac{\csusceptibilitypi{t}{3}}{N(t) + 1} + 6\pi^2(\csusceptibilitypi{t}{2})^2 + 6\pi \csusceptibilitypi{t}{2} + 1
        - \frac{6\pi^2\csusceptibilitypi{t}{5} + 9\pi^2\csusceptibilitypi{t}{4} + 3\pi^2 \csusceptibilitypi{t}{3}}{N(t)}\\
        &\le 6\pi^2(\csusceptibilitypi{t}{2})^2 + (6\pi + 1) \csusceptibilitypi{t}{2}.
    \end{align*}
    Take any $\vep \in (0, \alpha(\pi) \wedge (1- 2\alpha(\pi)))$, and  any $\delta>0$ such that $\vep + (1 + 6\pi^2)\delta < \alpha(\pi) \wedge (1 - 2\alpha(\pi))$. Write $$
    Z(t) := \e^{-(\alpha(\pi) - \vep - \delta)t} \csusceptibilitypi{t}{3}, \quad t \ge 0.
    $$
    By \eqref{gens3}, 
    \begin{align}\label{genless}
    \mathcal{L} Z(t) &\leq -(\alpha(\pi) - \vep - \delta) Z(t) + (3\alpha(\pi) - 1 + 6\pi^2|\csusceptibilitypi{t}{2} - \susceptibilitypi{\infty}{2}|)Z(t) + \e^{-(\alpha(\pi) - \vep)t}R(t)\nonumber\\
    &= -(1- 2\alpha(\pi) - \vep - \delta - 6\pi^2|\csusceptibilitypi{t}{2} - \susceptibilitypi{\infty}{2}|)Z(t) + \e^{-(\alpha(\pi) - \vep - \delta)t}R(t).
    \end{align}
    Fix any $M,L>0$. Define the stopping time
    $$
    \tau := \inf\{ t \ge M \colon \csusceptibilitypi{t}{2} \ge L \text{ or } |\csusceptibilitypi{t}{2} - \susceptibilitypi{\infty}{2}| \ge \delta\}.
    $$
    For any $t \ge M$, using \eqref{genless}, 
    \begin{align*}
        \mathbb{E}\left[Z(t \wedge \tau)\right] &\le \mathbb{E}[Z(M)] +  \int_M^{t\wedge \tau}\e^{-(\alpha(\pi) - \vep - \delta)s}R(s)ds\\
        &\le \mathbb{E}[Z(M)] + \int_M^\infty (6\pi^2L^2 + (6\pi + 1)L)\e^{-(\alpha(\pi) - \vep - \delta)s}ds\\
        &= \mathbb{E}[Z(M)] + \frac{6\pi^2L^2 + (6\pi + 1)L}{\alpha(\pi) - \vep - \delta}\e^{-(\alpha(\pi) - \vep - \delta)M}.
    \end{align*}
    In particular, for any $T >M$,
    \begin{align*}
        \mathbb{E}\left[\int_M^{T \wedge \tau} \e^{-(\alpha(\pi) - \vep)t} \csusceptibilitypi{t}{3}dt\right] &= \mathbb{E}\left[\int_M^{T \wedge \tau} \e^{-\delta t}Z(t)dt\right]\\
        &\le \int_M^\infty \e^{-\delta t}\mathbb{E}\left[Z(t \wedge \tau)\right]dt \le C(M,L,\delta) < \infty,
    \end{align*}
    for some finite constant $C(M,L,\delta)$. By the monotone convergence theorem, we obtain
    $$\mathbb{E}\left[\int_M^{\tau} \e^{-(\alpha(\pi) - \vep)t} \csusceptibilitypi{t}{3}dt\right] < \infty.
    $$
    In particular,
    \begin{align*}
        \mathbb{P}\left(\int_0^\infty \e^{-(\alpha(\pi) - \vep)t} \csusceptibilitypi{t}{3}dt < \infty\right) \ge \mathbb{P}\left(\csusceptibilitypi{t}{2} \le L \text{ and } |\csusceptibilitypi{t}{2} - \susceptibilitypi{\infty}{2}| \le \delta \text{ for all } t \ge M\right).
    \end{align*}
The probability on the rhs approaches one as $L \to \infty$ and $M \to \infty$. Thus, Lemma \ref{s3bd} follows.
\end{proof}

\subsection{Almost sure positivity of the limit}
\label{sec-almost-sure-positive-limit}
In this section, we use Proposition \ref{complb}  and Lemma \ref{s3bd} to prove the almost sure positivity of the almost sure limit $\zeta_1$ of $\e^{-\alpha(\pi)t}|\clusterpi{1}{t}|$, thereby completing the proof of Theorem \ref{thm-max-comp}(a). We do this by tracing the stochastic evolution of the {\em reciprocal} of the size of the first component.
\smallskip

We apply the generator of the Markov process governing the continuous-time evolution on $|\clusterpi{1}{t}|^{-1}$ to obtain
\begin{align*}
    \mathcal{L}|\clusterpi{1}{t}|^{-1} &= \sum_{j=2}^{N(t)} \left[\frac{1}{|\clusterpi{1}{t}| + |\clusterpi{\sss \ge j}{t}| + 1} - \frac{1}{|\clusterpi{1}{t}|}\right]\frac{2\pi^2|\clusterpi{1}{t}||\clusterpi{\sss \ge j}{t}|}{N(t)}\\
    &\quad + \left[\frac{1}{|\clusterpi{1}{t}| + 1} - \frac{1}{|\clusterpi{1}{t}|}\right]\left(2\pi(1-\pi)|\clusterpi{1}{t}| + \frac{\pi^2|\clusterpi{1}{t}|}{N(t)}\right)\\
    &\quad = \frac{1}{|\clusterpi{1}{t}|}\left[-\left(\sum_{j=1}^{N(t)}\frac{2\pi^2|\clusterpi{\sss \ge j}{t}|\left(|\clusterpi{\sss \ge j}{t}| + 1\right)}{N(t)} + 2\pi(1-\pi)\right) + R_1(t)\right],
\end{align*}
where
\begin{align*}
    R_1(t) &= \frac{2\pi^2}{N(t)}\sum_{j=2}^{N(t)}\frac{|\clusterpi{\sss \ge j}{t}|(|\clusterpi{\sss \ge j}{t}| + 1)^2}{|\clusterpi{1}{t}| + |\clusterpi{\sss \ge j}{t}| + 1} + \frac{2\pi(1 - \pi)}{|\clusterpi{1}{t}|+1}\\
    &\quad - \frac{\pi^2|\clusterpi{1}{t}|}{N(t)\left(|\clusterpi{1}{t}| + 1\right)} + \frac{2\pi^2|\clusterpi{1}{t}|\left(|\clusterpi{1}{t}| + 1\right)}{N(t)}.
\end{align*}
Now, simplifying the first term in the generator expression, we obtain
\begin{align*}
    \mathcal{L}|\clusterpi{1}{t}|^{-1} = \frac{1}{|\clusterpi{1}{t}|}\left[-\left(2\pi^2S_2^\pi(t) + 2\pi\right) + R_1(t)\right]
     = \frac{1}{|\clusterpi{1}{t}|}\left[-\alpha(\pi)t + R_2(t)\right],
\end{align*}
where
$$
R_2(t) = R_1(t) + 2\pi^2 (\susceptibilitypi{\infty}{2} - S_2^\pi(t)).
$$
From the above, we conclude that
\begin{equation*}
    M^{(-1)}(t) := \exp\left(\alpha(\pi)t - \int_0^tR_2(u)du\right)\frac{1}{|\clusterpi{1}{t}|}
\end{equation*}
is a positive martingale and hence converges almost surely to a finite limit, but we already know that
$$
\e^{\alpha(\pi)t}\frac{1}{|\clusterpi{1}{t}|} \convas \frac{1}{\zeta_1}.
$$
Thus, to show the almost sure positivity of $\zeta_1$, it suffices to show that, almost surely,
\begin{align}\label{r2fin}
    \int_0^\infty|R_2(u)| du < \infty,
\end{align}
which is what we will do now.
\smallskip

Fix any $\vep \in (0, \alpha(\pi) \wedge (1-2\alpha(\pi)))$. From Proposition \ref{complb}, translated to continuous time, we obtain a finite random constant $A$ such that $|\clusterpi{1}{t}| \ge A \e^{(\alpha(\pi) - \vep)t}$ for all $t \ge 0$. Using this fact along with Lemma \ref{s3bd}, we conclude that, almost surely,
\begin{align}\label{ribd}
    \int_0^\infty \left(\frac{2\pi^2}{N(t)}\sum_{j=2}^{N(t)}\frac{|\clusterpi{\sss \ge j}{t}|(|\clusterpi{\sss \ge j}{t}| + 1)^2}{|\clusterpi{1}{t}| + |\clusterpi{\sss \ge j}{t}| + 1}\right) dt &\le 8\pi^2\int_0^\infty \frac{\csusceptibilitypi{t}{3}}{|\clusterpi{1}{t}|}dt\nonumber\\
    &\le \frac{8\pi^2}{A}\int_0^{\infty} \e^{-(\alpha(\pi) - \vep)t}\csusceptibilitypi{t}{3}dt < \infty.
\end{align}
Moreover, using Propositions \ref{prop:ubc1} and \ref{complb}, almost surely,
\begin{align}\label{riibd}
    \int_0^{\infty}\left(\frac{2\pi(1 - \pi)}{|\clusterpi{1}{t}|+1} + \frac{\pi^2|\clusterpi{1}{t}|}{N(t)\left(|\clusterpi{1}{t}| + 1\right)} + \frac{2\pi^2|\clusterpi{1}{t}|\left(|\clusterpi{1}{t}| + 1\right)}{N(t)}\right)dt < \infty.
\end{align}
Finally, by \eqref{ctS}, almost surely,
\begin{align}\label{riiibd}
    \int_0^\infty 2\pi^2|\susceptibilitypi{\infty}{2} - S_2^\pi(t)|dt < \infty .
\end{align}
We conclude that \eqref{r2fin} follows from \eqref{ribd}, \eqref{riibd} and \eqref{riiibd}, thereby completing the proof of Theorem \ref{thm-max-comp}(a) for $i=1$. The proof for $i\geq 2$ is similar, and is therefore omitted.
\qed

\section{Asymptotics of the maximal component}
\label{sec-max-component}
The main goal of this section is to prove Theorem \ref{thm-max-comp}(b), which we will complete in Section \ref{sec-weak-persistence}.
A crucial ingredient is a uniform bound on the size of the components of `late' vertices, as we will discuss in Section \ref{sec-latecomplem}. 

\subsection{Late components cannot be large}
\label{sec-latecomplem}

For $i\geq 1$, recall the definition (in continuous time) of $\clusterpi{\sss \geq i}{t}$ in  \eqref{eqn:cc-less-def}. The following result proves a uniform bound on the size of the components of `late' vertices by controlling the higher-order moments of the normalized collection $\{e^{-\alpha(\pi) t}|\clusterpi{\sss \geq i}{t}|\}$ uniformly in $i > M$ for some large $M$, as well as in $t \ge 0$:

\begin{theorem}[Components of late vertices cannot be large]
\label{latecomplem}
For any $\pi < \pi_c$, $\eta>0$,
    \begin{equation*}
    \lim_{M \to \infty}\mathbb{P}\left(\max_{i>M}\sup_{t \ge 0}  \e^{-\alpha(\pi)t}|\clusterpi{\sss \geq i}{t}| > \eta\right) = 0.
    \end{equation*}
\end{theorem}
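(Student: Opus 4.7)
The plan is to reduce the statement to a uniform-in-$i$ $q$-th moment estimate for each late component. Working in the continuous-time embedding $(G_t^\pi)_{t\ge 0}$, let $T_i$ denote the birth time of vertex $i$ in the associated rate-one Yule process; standard computations give $\mathbb{E}[e^{-sT_i}] = \prod_{k=1}^{i-1}\frac{k}{k+s}\asymp i^{-s}$ for large $i$. Fixing a large integer $q$ with $q\alpha(\pi)>1$ and writing $C_i(t) := |\clusterpi{\sss \ge i}{t}|$, the target moment estimate is
\begin{equation*}
\mathbb{E}\Bigl[\sup_{t\ge 0}\bigl(e^{-\alpha(\pi)t}C_i(t)\bigr)^q\Bigr] \le C_q\, i^{-q\alpha(\pi)}.
\end{equation*}
Markov's inequality and a union bound would then give $\mathbb{P}(\max_{i>M}\sup_t e^{-\alpha(\pi)t}C_i(t)>\eta) \le C_q\eta^{-q}\sum_{i>M}i^{-q\alpha(\pi)}\to 0$ as $M\to\infty$, which is the desired conclusion.

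To establish the moment bound I would apply the Markov generator to $C_i^q$, treating $C_i$ as a process starting at $C_i(T_i)\in\{0,1\}$, growing by mergers with strictly younger components and by linear attachment of new singletons, and being killed (sent to $0$) upon a merger with an older component. Expanding $(C_i + |C'|+1)^q - C_i^q$ binomially in the multiplicative-coalescent portion of the generator -- mirroring the $q=1$ calculation in \eqref{dynamics-connected-component-continuous} but retaining all higher-order terms -- yields
\begin{equation*}
\mathcal{L}\,C_i(t)^q \le q\bigl(2\pi^2\csusceptibilitypi{t}{2} + 2\pi\bigr)\,C_i(t)^q - \mathcal{K}_i(t)\,C_i(t)^q + \mathcal{R}_i^{(q)}(t),
\end{equation*}
where $\mathcal{K}_i(t) := \sum_{j<i}\tfrac{2\pi^2|\clusterpi{\sss \ge j}{t}|}{N(t)} \ge 0$ is the killing rate and $\mathcal{R}_i^{(q)}(t)$ is a sum of positive correction terms of the form $\binom{q}{k}C_i^{q-k+1}\csusceptibilitypi{t}{k+1}$ for $k=2,\dots,q$. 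Since $\csusceptibilitypi{t}{2}\to\susceptibilitypi{\infty}{2}$ almost surely (Theorem \ref{thm-suscep-comp}) at a polynomial rate (Proposition \ref{prop-conrates2}), the natural stochastic compensator is $\Phi(t) := \exp\!\bigl(-\int_0^t q[2\pi^2\csusceptibilitypi{u}{2}+2\pi]\,du\bigr)$, for which $\Phi(t)e^{q\alpha(\pi)t}\to F_\infty\in(0,\infty)$ almost surely by \eqref{ctS}. Modulo $\mathcal{R}_i^{(q)}$, the process $Y_i(t):=\Phi(t)C_i(t)^q$ is a non-negative supermartingale with $Y_i(T_i)\le \Phi(T_i)$, and Doob's $L^1$ maximal inequality combined with $\mathbb{E}[\Phi(T_i)]\asymp i^{-q\alpha(\pi)}$ delivers the sought bound.

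The main obstacle is controlling the correction $\mathcal{R}_i^{(q)}(t)$: its terms naively overwhelm the leading drift $C_i^q$ precisely in the regime $C_i(t)\ll e^{\alpha(\pi)t}$ which is the regime of interest. My strategy is to restrict the analysis to a high-probability event on which the higher susceptibilities are a priori controlled. By Proposition \ref{prop:max-as-up-bound}, $|\clusterold_{\max}(t)| \le e^{(\alpha(\pi)+\varepsilon)t}$ eventually almost surely for every $\varepsilon>0$, whence $\csusceptibilitypi{t}{k+1}\le|\clusterold_{\max}(t)|^{k-1}\csusceptibilitypi{t}{2}\le C e^{(k-1)(\alpha(\pi)+\varepsilon)t}$. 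Introducing stopping times that capture failure of this bound (and of the rate-of-convergence bound of Proposition \ref{prop-conrates2}), the martingale analysis can be restricted below these stopping times, which are almost surely infinite. To truly absorb $\mathcal{R}_i^{(q)}$ one then introduces a further process-dependent compensator of the shape $\exp(-\int_0^t\mathcal{R}_i^{(q)}(u)/C_i(u)^q\,du)$, whose integrability follows from the a priori susceptibility estimates; equivalently one runs a bootstrap in which Lemma \ref{lem-higher-moment-bound} (with $\beta$ slightly greater than $\alpha(\pi)$) is fed into a Burkholder-Davis-Gundy-type estimate on the martingale part of $C_i^q$ and iterated to sharpen the exponent. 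Cleanly handling the quadratic variation, and reconciling the unhelpful $\mathcal{R}_i^{(q)}$ with the helpful killing term $\mathcal{K}_i$, is the technical heart of the argument; once this is done, summing the resulting tail estimate over $i>M$ and letting $M\to\infty$ completes the proof.
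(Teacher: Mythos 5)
Your overall blueprint---raise the component size to a high power, compensate by the integrated susceptibility, impose a priori control via stopping times, and finish with Markov's inequality and a union bound over $i>M$---matches the paper's strategy, and you correctly identify the binomial correction $\mathcal{R}_i^{(q)}$ in $\mathcal{L}C_i^q$ as the crux. The genuine gap is in where the decay in $i$ of the correction's contribution is supposed to come from; without it the argument does not close. If you control the higher susceptibilities via the global bound $|\clusterpi{\max}{u}|\le e^{(\alpha(\pi)+\varepsilon)u}$ and also bound $C_i(u)\le e^{(\alpha(\pi)+\varepsilon)u}$, the compensated drift integral becomes, schematically,
\begin{equation*}
\int_{\tau_i}^{\infty}\Phi(u)\,\mathcal{R}_i^{(q)}(u)\,du \;\lesssim\; \int_{\tau_i}^{\infty} e^{-q\alpha(\pi) u}\sum_{k=2}^{q}\binom{q}{k}\, e^{(q-k+1)(\alpha(\pi)+\varepsilon)u}\,e^{(k-1)(\alpha(\pi)+\varepsilon)u}\,du \;\asymp\; \int_{\tau_i}^{\infty} e^{q\varepsilon u}\,du,
\end{equation*}
which diverges, and in any case carries no prefactor decaying in $i$. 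Restricting to the good event supplied by Proposition~\ref{prop:max-as-up-bound} changes the probability of failure, not the size of this integral. Your fallback of dividing by $C_i^q$ and compensating by $\exp(-\int_0^t\mathcal{R}_i^{(q)}(u)/C_i(u)^q\,du)$ is worse still: $\mathcal{R}_i^{(q)}/C_i^q$ contains terms of the form $S_{k+1}^\pi(u)/C_i(u)^{k-1}$, which blow up when $C_i(u)$ is small, and the uniform-in-$i$ integrability of this process-dependent compensator is precisely the thing one has to prove, not a corollary of the a priori susceptibility bounds.

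What your outline is missing is a coupling that localizes the comparison to the post-$\tau_i$ epoch. The paper's Lemma~\ref{lem:coupling} shows that, conditionally on $\mathscr{F}_{\tau_i}$, the family of components whose oldest vertex is at least $i$ is stochastically dominated by a fresh copy of the whole process restarted at time $\tau_i$. Combined with the tree-graph moment bounds (Lemma~\ref{lem-higher-moment-bound}), this yields the high-probability bound $|\clusterpi{\max,\sss\geq i}{u}|\le i^{\varepsilon}e^{(\alpha(\pi)+\varepsilon/(2k))(u-\tau_i)}$ for all $u\ge\tau_i$ (Lemma~\ref{lem:initial-bds}(b))---crucially a function of the \emph{elapsed} time $u-\tau_i$, not the global time $u$. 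The paper also tightens the correction estimate to $R_i^{(k)}(u)\le A(1+S_3^\pi(u))\,|\clusterpi{\max,\sss\geq i}{u}|^{k-1}$, needing only the third susceptibility rather than the whole tower $S_{k+1}^\pi$. Substituting the restart bound then produces a prefactor $i^{(k-1)\varepsilon}e^{-(k-1)\alpha(\pi)\tau_i}$ multiplying a convergent integral, and $\mathbb{E}[e^{-\theta\tau_i}]\lesssim i^{-\theta}$ (which you do record) converts the exponent in $\tau_i$ into power-law decay in $i$. This is what makes the drift and quadratic-variation bounds summable over $i>M$; without it the union bound at the end cannot succeed.
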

\begin{proof}
    Since the proof of Theorem \ref{latecomplem} is technical, let us outline the main ingredients:
\begin{enumeratea}
    \item A stochastic coupling lemma allowing one to compare the maximal size of all components with smallest index above some value $i$ with the maximal component of the whole network started at time $\tau_i$, the arrival time of the $i^{\rm th}$ vertex in continuous time. 
    \item \label{it:b} Uniform `a priori' upper bounds on the size of the normalized maximal component using our tree-graph inequalities formulated in Lemma \ref{lem-higher-moment-bound}. Note that these bounds are suboptimal in a sense because the exponent $\beta$ in the lemma has to be strictly greater than $\alpha(\pi)$.
    \item  We exploit a semi-martingale decomposition of $[\clusterpi{\sss \geq i}{t}]^k$, for large but finite $i$ and appropriately chosen $k$, to obtain uniform bounds on the drift and quadratic variation in this decomposition up to carefully chosen stopping times. Combining this with the a priori bounds in \eqref{it:b} furnishes the uniform control for all $i >M$ and $t \ge 0$, proving the result.
\end{enumeratea}
We now proceed with the proof. We first need some notation. Define
\begin{equation}
\label{eqn:3156}
    |\clusterpi{ \max, \sss\geq i}{t}| = \sup_{j\geq i}|\clusterpi{\sss \geq j}{t}|, \qquad S_{\ell, \sss\geq i}^\pi(t) = \frac{1}{N(t)} \sum_{j=i+1}^{N(t)} |\clusterpi{\sss \geq j}{t}|^{\ell}.  
\end{equation}
Thus, the full susceptibility $S_{2}^\pi(t)$ satisfies $S_{2}^\pi(t) = S_{2, \sss\geq 1}^\pi(t)$. Recall our notation $w(u) = 2\pi^2 S_2^{\pi}(u) + 2\pi$, and define
    \eqn{
    \Lambda(t) = \int_0^t w(u) du.}
It will be convenient to occasionally switch between discrete and continuous time. In doing so, we will slightly abuse notation, and write the time index ``$t$'' for continuous and ``$n$'' for discrete time.

\begin{lem}[Coupling of the process beyond a stage with original process]
    \label{lem:coupling}
    Fix any $i \ge 1$. Recall that $\tau_i$ denotes the arrival time of vertex $i$ in continuous time. Conditionally on $\mathscr{F}_{\tau_i}$, there exists a coupling between the original percolated network process $(\clusterpi{}{t})_{t \ge 0}$, and another process with the same dynamics, but now started from time $\tau_i$, with the $i^{\rm th}$ vertex being the root, such that the presence of an edge between vertices $i_1 \ge i$ and $i_2 \ge i$ in the original process implies its presence in the coupled process. In particular, conditionally on $\mathscr{F}_{\tau_i}$,  the process $(|\clusterpi{\max, \sss\geq i}{t + \tau_i}|)_{t \ge 0}$ is pathwise stochastically dominated by $(|\clusterpi{\max}{t}|)_{t \ge 0}$.
\end{lem}
\begin{proof}
The coupling is explicitly constructed as follows. Let $ i \le i_1 < i_2$. In the original model, the probability of vertex $i_2$ attaching to $i_1$ is $\pi/i_2$. In the coupled process, this probability is set to $\pi/(i_2-i)$. To prescribe the coupling precisely, for every such $i_1, i_2$, sample a uniform random variable $U$ on $[0,1]$. If $U \le \pi/i_2$, the edge $(i_1, i_2)$ is placed in both the original and coupled processes. If $\pi/i_2 < U \le \pi/(i_2-i)$, the edge is placed in the coupled process but not the original process. Otherwise, the edge is not placed in either process. For $i_1 < i < i_2$, there is no edge $(i_1, i_2)$ in the coupled process, but this edge is present in the original process with probability $\pi/i_2$.
\end{proof}
We next state a simple bound on the moment generating function of the birth times of the Yule process:

\begin{lem}[Moment generating function of birth times in Yule processes]
\label{lem:yule-proc-bd}
    Let $\tau_i$ denote the time for the $i^{\rm th}$ arrival in a rate-one Yule process. Then, for any $\theta >0$, there exists a $C_\theta < \infty$ such that $\E[\e^{-\theta \tau_i}] \leq C_\theta i^{-\theta}$ for all $i\geq 1$.
\end{lem}
\begin{proof}
    By construction, $\tau_i \stackrel{d}{=} \sum_{j=1}^i \frac{E_j}{j}$ where $(E_j)_{j\geq 1}$ are iid exponential random variables with mean 1. Thus, 
    $$
    \mathbb{E}\big[\e^{-\theta \tau_i}\big] = \prod_{l=1}^i \frac{l}{l +\theta}=\e^{-\sum_{l=1}^i \log(1-\frac{\theta}{j})} \le C_\theta i^{-\theta},
    $$
    where the last line arises from $\log(1+x)\leq 1+x$. 
\end{proof}

\begin{lem}[A priori bounds on growth of maxima]
\label{lem:initial-bds}
    \begin{enumeratea}
        \item \label{it:aa} For any $\delta \in (0,1)$ and $\ell> 2/\delta$, there exists a  constant $C_{\ell, \delta} < \infty$ such that, for all $j\geq 1$ and $B>0$,
        \[
    \pr\bigg(\sup_{t\geq 0} \e^{-(\alpha(\pi)+\delta)t})|\clusterpi{\sss\geq j}{t}| \geq B\bigg) \leq \frac{C_{\ell, \delta}}{B^\ell}\frac{1}{ j^{(\alpha(\pi)+\delta)\ell -1}}.\]
    
    \item \label{it:bb} Fix $\eps\in (0,1)$ and integer $k\geq 1$. Then, for all $\ell > \max\set{\frac{4k}{\eps}, \frac{2}{\alpha(\pi)}}$, there exists a constant $C_{\ell, \eps}^\prime < \infty$ such that, for all $i \ge 1$,
    \[\pr\bigg(\sup_{t\geq \tau_i} \e^{-(\alpha(\pi) +\frac{\eps}{2k})(t-\tau_i)}|\clusterpi{\max, \sss\geq i}{t}| \geq i^{\eps}\bigg) \leq \frac{C_{\ell, \eps}^\prime}{i^{\eps \ell}}.  \]
    \end{enumeratea}
    
\end{lem}
\begin{proof}
    We start with the proof of \eqref{it:aa}. We switch to discrete time, and  note that 
    \begin{equation}
    \label{eqn:4853}
        \sup_{t\geq 0} e^{-(\alpha(\pi)+\delta)t})|\clusterpi{\sss\geq j}{t}| =\sup_{t\geq \tau_j}\e^{-(\alpha(\pi)+\delta)t}|\clusterpi{\sss\geq j}{t}| = \sup_{m\geq j}\e^{-(\alpha(\pi)+\delta)\tau_m}|\clusterpi{\sss\geq j}{\tau_m}|. 
    \end{equation} 
    For the rest of the argument, we abbreviate $\alpha=\alpha(\pi)$ to reduce notational overhead.  Next, note that 
    \begin{align}
\mathbb{P}\left( \sup_{n \ge j} n^{-(\alpha + \delta)} \left| \clusterpi{\sss\geq j}{n} \right| \ge B \right)
&\le \sum_{n=j}^\infty \mathbb{P}\left( \left| \clusterpi{\sss\geq j}{n} \right| \ge B n^{\alpha + \delta} \right) 
\le \sum_{n=j}^\infty \frac{ \mathbb{E}\!\left[ \left| \clusterpi{\sss\geq j}{n} \right|^{\ell} \right] }{ B^{\ell} n^{(\alpha + \delta) \ell} } \\
&\le \frac{C''_{\ell,\delta}}{B^{\ell}} \sum_{n=j}^\infty \frac{1}{n^{(\alpha + \delta)\ell}} \left( \frac{n}{j} \right)^{(\alpha+\frac{\delta}{2})\ell} 
= \frac{C''_{\ell,\delta}}{B^{\ell} j^{\ell(\alpha + \frac{\delta}{2})}}  \sum_{n=j}^\infty \frac{1}{n^{\delta \ell / 2}},\nn
\end{align}
for some constant $C''_{\ell,\delta}>0$,
where the first inequality in the last line follows by Lemma \ref{lem-higher-moment-bound} with $k=\ell$ and $ \beta=\alpha+\delta/2$. The assumption that $\ell\delta>2$, so that $\sum_{n=j}^\infty \frac{1}{n^{\delta \ell / 2}}  \le C j^{1-\delta\ell/2}$ for some constant $C>0$, combined with \eqref{eqn:4853}, completes the proof.
\smallskip

Let us next prove \eqref{it:bb} using the estimate in \eqref{it:aa}.   First note that 
\begin{align}
\mathbb{P}\!\left(\sup_{t \ge \tau_i} e^{-(\alpha + \frac{\varepsilon}{2k})(t - \tau_i)} \left| \clusterpi{\max, \sss\geq i}{t} \right| \ge i^{\varepsilon} \right)
&\le \mathbb{P}\!\left(\sup_{t \ge 0} \e^{-(\alpha + \frac{\varepsilon}{2k})t} \left| \clusterpi{ \max}{t} \right| \ge i^{\varepsilon} \right), \\
&\le \sum_{j=1}^\infty \mathbb{P}\!\left(\sup_{t \ge \tau_j} \e^{-(\alpha + \frac{\varepsilon}{2k})t} \left| \clusterpi{\sss\geq j}{t} \right| \ge i^{\varepsilon} \right),\nn
\end{align}
where we have used Lemma \ref{lem:coupling} for the first inequality, and the union bound for the second. Finally, we use part \eqref{it:aa} with $\delta = \eps/(2k), B=i^{\eps}$ for each term in the final series, and choosing $\ell > \max\set{\frac{4k}{\eps}, \frac{2}{\alpha}}$ so that the resultant series converges. This completes the proof of part (b).  
\end{proof}
We now have the technical estimates required to complete the proof of Theorem \ref{latecomplem}. Recall that, throughout the proof, we abbreviate $\alpha = \alpha(\pi)$. As described in the outline of the proof, the goal is to analyze the $k^{\rm th}$ powers of $|\clusterpi{\sss \geq i}{\cdot}|$ for an appropriate choice of $k$ on a collection of ``good events'' defined via appropriate stopping times. Let us now give the details of this argument.
\smallskip

Fix a constant $L\geq 1$. Next, choose $\eps \in (0,\min(\alpha, 1-2\alpha))$ (as in the setting of Lemma \ref{s3bd}) small enough, and $k \ge 1$ large enough, such that
\begin{equation}
    \label{eqn:k-bd}
    \alpha\left(\frac{k}{2}-1\right) -\eps k \ge 2.
\end{equation}
Define the stopping times 
\begin{equation}\label{eq:sigmaL}
\sigma_L := \inf\left\{ t \ge \tau_i \colon  
\int_0^t \e^{-(\alpha -\eps)u} \left( 1 + \csusceptibilitypi{u}{3} \right) \, du > L 
\ \text{ or } \
2\pi^2 \int_0^t \left| \susceptibilitypi{\infty}{2} - \csusceptibilitypi{u}{2} \right| \, du > L 
\right\},
\end{equation}
and, for $i\geq 1$, 
\begin{equation}
\label{eq:gvrL-i}
\varrho_L^{(i)} := \inf\left\{ t \ge \tau_i \colon  
\left| \clusterpi{\max, \sss\geq i}{t} \right| 
> i^{\varepsilon} \e^{(\alpha + \frac{\varepsilon}{2k})(t - \tau_i)} \right\} 
\wedge \sigma_L.
\end{equation}

Fix $i\geq 1$ and, with $k$ as in \eqref{eqn:k-bd}, consider $|\clusterpi{\sss \geq i}{t}|^k$. As in \eqref{dynamics-connected-component-continuous} and Lemma \ref{s3bd}, applying the generator $\cL$ of the underlying Markov process,
    \begin{align}\label{genk}
        \mathcal{L} |\clusterpi{\sss\geq i}{t}|^k &= \sum_{j=i+1}^{N(t)}\left((|\clusterpi{\sss\geq i}{t}|+ |\clusterpi{\sss \ge j}{t}| + 1)^k - |\clusterpi{\sss\geq i}{t}|^k\right)\frac{2\pi^2|\clusterpi{\sss\geq i}{t}||\clusterpi{\sss \ge j}{t}|}{N(t)}\notag\\
        &\quad + 2\pi(1-\pi)\left((|\clusterpi{\sss\geq i}{t}|+ 1)^k - |\clusterpi{\sss\geq i}{t}|^k\right)|\clusterpi{\sss\geq i}{t}|\notag\\
        &\quad + \left((|\clusterpi{\sss\geq i}{t}|+ 1)^k - |\clusterpi{\sss\geq i}{t}|^k\right) \frac{\pi^2|\clusterpi{\sss\geq i}{t}|^2}{N(t)}\notag\\
        & = 2\pi^2\sum_{j=i+1}^{N(t)}\sum_{l=0}^{k-1}{k \choose l} |\clusterpi{\sss\geq i}{t}|^l(|\clusterpi{\sss \ge j}{t}| + 1)^{k-l}\frac{|\clusterpi{\sss\geq i}{t}||\clusterpi{\sss \ge j}{t}|}{N(t)}\notag\\
        &\quad + 2\pi(1-\pi)\sum_{l=0}^{k-1}{k \choose l} |\clusterpi{\sss\geq i}{t}|^l|\clusterpi{\sss\geq i}{t}| + \pi^2 \sum_{l=0}^{k-1}{k \choose l} |\clusterpi{\sss\geq i}{t}|^l \frac{|\clusterpi{\sss\geq i}{t}|^2}{N(t)}\notag\\
        &= kw(t) |\clusterpi{\sss\geq i}{t}|^k + R_i^{\sss(k)}(t),
    \end{align}
    where we recall that $w(t) = 2\pi^2 \csusceptibilitypi{t}{2} + 2\pi$, and
    \begin{align*}
        0 &\le R_i^{\sss(k)}(t) \le A_1 \sum_{l=3}^{k+1}(S_{l,\sss\geq i}^\pi(t)+1)|\clusterpi{\sss\geq i}{t}|^{k-l+2} + A_2\sum_{l=3}^{k+1}|\clusterpi{\sss\geq i}{t}|^{k-l+2}\\
        & \leq A(1+\csusceptibilitypi{t}{3})[|\clusterpi{\max, \sss\geq i}{t}|]^{k-1},
    \end{align*}
    for positive constants $A_1,A_2, A$ depending only on $k$. 

Recall that $\Lambda(u)=\int_0^u w(d)du$, and note that
 \begin{align}
 \int_{\tau_i}^{\varrho_L^{\sss (i)}} & \e^{-k \Lambda(u)} R_i^{\sss (k)}(u)\, du
\le 
A \int_{\tau_i}^{\varrho_L^{\sss (i)}} 
\e^{-k \Lambda(u)} \big(1 + \csusceptibilitypi{u}{3}\big)
\big|\clusterpi{\max,\sss\geq i}{u}\big|^{k-1} \, du \nonumber \\[1em]
 &\leq A\, \e^{-(k-1)\!\big(\alpha + \frac{\varepsilon}{k-1}\big)\!\tau_i}
\int_{\tau_i}^{\varrho_L^{\sss (i)}}
\e^{-(\alpha - \varepsilon)u} \big(1 + \csusceptibilitypi{u}{3}\big)
\bigg(
\e^{-(k-1)\!\big(\alpha + \frac{\varepsilon}{k-1}\big)\!(u-\tau_i)}
\big|\clusterpi{\max, \sss\geq i}{u}\big|^{k-1}
\bigg) \nonumber \\[4pt]
&\hspace{10em}\times 
\exp\!\bigg(
2\pi^2 k \int_0^u \!\big(\susceptibilitypi{\infty}{2}
- \csusceptibilitypi{v}{2}\big)\,dv
\bigg) du \nonumber \\
&\leq AL\e^{kL}i^{(k-1)\eps}\e^{-(k-1)\alpha \tau_i}, \label{eqn:one}
\end{align}
where the last bound follows using the definition of the stopping time $\varrho_L^{\sss(i)}$ in \eqref{eq:gvrL-i}.  
\smallskip

Define the process $Y_{\sss\geq i}^{\sss(k)}(t) := \e^{-k\Lambda(t) } [|\clusterpi{\sss\geq i}{t}|]^k$, and consider the semi-martingale decomposition into a martingale term and a predictable drift process as 
    \eqn{
    dY_{\sss\geq i}^{\sss(k)}(t) = d\cM_{\sss\geq i}^{\sss(k)}(t) + b_{\sss\geq i}^{\sss(k)}(t) dt, \qquad b_{\sss\geq i}^{\sss(k)}(t) \leq \e^{-k\Lambda(t)} R_i^{\sss(k)}(t).
    }
Further, write $\cB_{\sss\geq i}(t) = \int_0^t  b_{\sss\geq i}^{\sss(k)}(u) du$ and note that \eqref{eqn:one} implies 
\begin{equation}
    \cB_{\sss\geq i}(t\wedge \varrho_L^{\sss(i)}) \leq  ALe^{kL}i^{(k-1)\eps}\e^{-(k-1)\alpha \tau_i} \quad \text{for all } \ t \ge 0. 
\end{equation}
Next, note that the quadratic variation of the martingale is given by 
\begin{align}\label{gen2k}
\frac{d \left\langle \cM_{
\sss\geq i}^{(k)} \right\rangle(t)}{dt}
&= \e^{-2k \Lambda(t)} 
\lim_{h \to 0} \frac{1}{h} 
\mathbb{E}\!\left[
\left(|\clusterpi{\sss\geq i}{t+h}|^k - |\clusterpi{\sss\geq i}{t}|^k \right)^{\!2}
\Big| \mathcal{F}_t\right]\notag\\
&= \e^{-2k \Lambda(t)}\sum_{j=i+1}^{N(t)}\left((|\clusterpi{\sss\geq i}{t}|+ |\clusterpi{\sss \ge j}{t}| + 1)^k - |\clusterpi{\sss\geq i}{t}|^k\right)^2\frac{2\pi^2|\clusterpi{\sss\geq i}{t}||\clusterpi{\sss \ge j}{t}|}{N(t)}\notag\\
        &\quad + 2\pi(1-\pi)\e^{-2k \Lambda(t)}\left((|\clusterpi{\sss\geq i}{t}|+ 1)^k - |\clusterpi{\sss\geq i}{t}|^k\right)^2|\clusterpi{\sss\geq i}{t}|\notag\\
        &\quad + \e^{-2k \Lambda(t)}\left((|\clusterpi{\sss\geq i}{t}|+ 1)^k - |\clusterpi{\sss\geq i}{t}|^k\right)^2 \frac{\pi^2|\clusterpi{\sss\geq i}{t}|^2}{N(t)}.
\end{align}
The first term in \eqref{gen2k} can be bounded, similarly as in \eqref{genk}, by
\begin{align*}
    &\e^{-2k \Lambda(t)}\sum_{j=i+1}^{N(t)}\left((|\clusterpi{\sss\geq i}{t}|+ |\clusterpi{\sss \ge j}{t}| + 1)^k - |\clusterpi{\sss\geq i}{t}|^k\right)^2\frac{2\pi^2|\clusterpi{\sss\geq i}{t}||\clusterpi{\sss \ge j}{t}|}{N(t)}\\
    & = \e^{-2k \Lambda(t)}\sum_{j=i+1}^{N(t)}\left[\sum_{l=0}^{k-1}{k \choose l} |\clusterpi{\sss\geq i}{t}|^l(|\clusterpi{\sss \ge j}{t}| + 1)^{k-l}\right]^2\frac{2\pi^2|\clusterpi{\sss\geq i}{t}||\clusterpi{\sss \ge j}{t}|}{N(t)}\\
    &\le 2\pi^2 k \e^{-2k \Lambda(t)}\sum_{j=i+1}^{N(t)}\sum_{l=0}^{k-1}{k \choose l}^2 |\clusterpi{\sss\geq i}{t}|^{2l}(|\clusterpi{\sss \ge j}{t}| + 1)^{2k-2l}\frac{|\clusterpi{\sss\geq i}{t}||\clusterpi{\sss \ge j}{t}|}{N(t)}\\
    &\le A_1' \e^{-2k \Lambda(t)} \sum_{l=0}^{k-1}|\clusterpi{\sss\geq i}{t}|^{2l+1}\left(1+S_{2k-2l+1, \geq i}^\pi(t)\right)\\
    &\le A_2' \e^{-2k \Lambda(t)}(1+\csusceptibilitypi{t}{3})|\clusterpi{\max, \sss\geq i}{t}|^{2k-1},
\end{align*}
for constants $A_1',A_2'$ depending only on $k$, where the first inequality follows from the Cauchy-Schwarz inequality and the next few bounds are obtained similarly as in \eqref{genk}.
\smallskip

Similarly, the sum of the last two terms in \eqref{gen2k} is bounded by
\eqn{
A_3'\e^{-2k \Lambda(t)}|\clusterpi{\max, \sss\geq i}{t}|^{2k-1}.
}
Combining the above bounds, we obtain a constant $A^\prime=A^\prime(k)< \infty$ such that
    \eqn{
    \frac{d \left\langle \cM_{\sss\geq i}^{(k)} \right\rangle(t)}{dt} \leq A^\prime \e^{-2k\Lambda(t)}(1+\csusceptibilitypi{t}{3})|\clusterpi{\max, \sss\geq i}{t}|^{2k-1}.
    }
Consequently, for the stopped martingale $\hat{\cM}_{\sss\geq i}^{\sss(k)}(\cdot) = \cM_{\sss\geq i}^{\sss(k)}(\cdot\wedge \varrho_L^{\sss(i)})$, we have another constant $A^*$ such that the quadratic variation process satisfies, for all $t \ge 0$, 
\begin{align}
    \left\langle \hat{\cM}_{\sss\geq i}^{(k)} \right\rangle(t) &\leq A^\prime \int_{\tau_i}^{\varrho_L^{\sss(i)}} \e^{-2k \Lambda(u)} \big(1 + \csusceptibilitypi{u}{3}\big)
\big|\clusterpi{\max,\sss\geq i}{u}\big|^{2k-1} \, du \nonumber\\
& \leq  A^*L\e^{2kL}i^{(2k-1)\eps}\e^{-(2k-1)\alpha \tau_i}\label{eqn:two},
\end{align}
where the last inequality follows the same reasoning as in \eqref{eqn:one}.
\smallskip

Fix $\eta \in (0,1)$. Consider the bounds in \eqref{eqn:one}
and \eqref{eqn:two} and define the constant $\bar A = \max(A, \sqrt{A^*})$. Define the `good' event $\cG^{\sss(i)}$ by
    \eqn{\cG^{\sss(i)}:= \set{\bar A L\e^{kL}i^{k\eps}e^{-(k-1)\alpha \tau_i} \leq \frac{\eta^k}{2i^{\alpha k/2}}}.
    }
Note that by \eqref{eqn:two}, on the event $\cG^{\sss(i)}$, the quadratic variation of the stopped process satisfies
\begin{equation}
    \label{eqn:44431}
     \left\langle \hat{\cM}_{
     \sss\geq i}^{(k)} \right\rangle(\infty) \leq \frac{\eta^{2k}}{4i^{\alpha k}}.
\end{equation}
Now note that, by \eqref{eqn:one},
\begin{align*}
\pr\left(\sup_{t\geq \tau_i}e^{-\Lambda(t)}|\clusterpi{\sss\geq i}{t\wedge \varrho_L^{\sss(i)}}| \geq \eta\right) &= \pr\left(\sup_{t\geq \tau_i}Y_{\sss\geq i}(t\wedge \varrho_L^{\sss(i)}) \geq \eta^k\right) \\
&\leq \pr\left([\cG^{\sss(i)}]^c\right) + \pr\left(\cG^{\sss(i)} \cap \set{\sup_{t\geq \tau_i}|\hat{\cM}_{\sss\geq i}^{\sss(k)}| > \eta^{k}\Big(1-\frac{1}{2i^{\alpha k/2}}\Big)}\right). 
\label{eqn:rhol-bound}  
\end{align*}
For the second term, by Doob's $L^2$ maximal inequality, and using \eqref{eqn:44431}, there is a constant $C$ not depending on $i$ such that
\begin{equation}
    \label{eqn:4449}
    \pr\left(\cG^{\sss(i)} \cap \set{\sup_{t\geq \tau_i}|\hat{\cM}_{\sss\geq i}^{\sss(k)}| > \eta^{k}\Big(1-\frac{1}{2i^{\alpha k/2}}\Big)}\right) \leq \frac{C}{i^{\alpha k}}.
\end{equation}
For the first term, by Markov's inequality and then using Lemma \ref{lem:yule-proc-bd},
\begin{equation}
\label{eqn:pls-check}
\pr([\cG^{\sss(i)}]^c) \leq    \frac{2\, i^{\frac{\alpha k}{2} + \varepsilon k}}{\eta^k} 
\cdot 
\overline{A} L\, e^{kL}\, 
\mathbb{E}\!\left[ e^{-(k-1)\alpha \tau_i} \right]\leq \frac{\hat{A}_{k,L}}{\eta^k}\frac{1}{i^{\left(\frac{k}{2}-1\right)\alpha - \eps k}}
\end{equation}
for some constant $\hat{A}_{k,L}$ depending only on $k,L$.
Now \eqref{eqn:4449} and \eqref{eqn:pls-check}, combined with the choice of $k$ and $\eps$ in \eqref{eqn:k-bd}, give that, for some constant $C = C(k,L)< \infty$, 
\begin{equation}
\label{eqn:tau-i-bd}
    \pr\Big(\sup_{t\geq \tau_i}\e^{-\Lambda(t)}|\clusterpi{\sss\geq i}{t\wedge \varrho_L^{\sss(i)}}| \geq \eta\Big) \leq C\left(1+\frac{1}{\eta^k}\right)\frac{1}{i^2}.
\end{equation}
Next note that 
\begin{align}
    \mathbb{P}\!\left(\sup_{t \ge \tau_i} \e^{-\Lambda(t)} \big|\clusterpi{\sss\geq i}{t \wedge \sigma_L}\big| \ge \eta \right)
\le 
\mathbb{P}&\!\left(\sup_{t \ge \tau_i} \e^{-\Lambda(t)} \big|\clusterpi{\sss\geq i}{t \wedge \varrho_L^{\sss (i)}}\big| \ge \eta \right)\nonumber\\
&+ 
\mathbb{P}\!\left(\sup_{t \ge \tau_i} \e^{-(\alpha + \frac{\varepsilon}{2k})(t - \tau_i)} \big|\clusterpi{\max, \sss\geq i}{t}\big| \ge i^{\eps} \right). \label{eqn:4326}
\end{align}
For the second term in \eqref{eqn:4326}, by Lemma \ref{lem:initial-bds} \eqref{it:bb} with $\ell > \max\set{\frac{4k}{\eps}, \frac{2}{\alpha}}$, and noting that this implies $\ell \eps >2$, there exists an appropriate constant $C<\infty$ such that
\begin{equation}
    \label{eqn:4324}
    \mathbb{P}\!\left(\sup_{t \ge \tau_i} \e^{-(\alpha + \frac{\varepsilon}{2k})(t - \tau_i)} \big|\clusterpi{\max, \sss\geq i}{t}\big| \ge i^{\eps} \right) \leq \frac{C}{i^2}. 
\end{equation}
Combining this with \eqref{eqn:tau-i-bd} for the first term in \eqref{eqn:4326}, and using these bounds in \eqref{eqn:4326}, gives for a constant $C^\prime = C^\prime(k,L)<\infty$ and not depending on $i$, 
\begin{equation}
     \mathbb{P}\!\left(\sup_{t \ge \tau_i} e^{-\Lambda(t)} \big|\clusterpi{\sss\geq i}{t \wedge \sigma_L}\big| \ge \eta \right) \leq C^\prime\left(1+\frac{1}{\eta^k}\right)\frac{1}{i^2}. 
\end{equation}
Thus, for any fixed $M$, by the union bound, 
\begin{equation}
\label{eqn:5731}
    \mathbb{P}\!\left(
\sup_{i \ge M} \sup_{t \ge \tau_i} 
\e^{-\Lambda(t)} \big|\clusterpi{\sss\geq i}{t \wedge \sigma_L}\big| \ge \eta
\right)
\le 
A_{k,L}^\circ 
\left( 1 + \frac{1}{\eta^k} \right) 
\frac{1}{M},
\end{equation}
for a suitable constant $A_{k,L}^\circ$.
In particular, 
\[ \mathbb{P}\!\left(
\sup_{i \ge M} \sup_{t \ge \tau_i} 
\e^{-\Lambda(t)} \big|\clusterpi{\sss\geq i}{t}\big| \ge \eta
\right)
\le 
A_{k,L}^\circ 
\left( 1 + \frac{1}{\eta^k} \right) 
\frac{1}{M} +\pr(\sigma_L< \infty).\]
Thus, for every $L>0$, 
\[\limsup_{M\to\infty} \mathbb{P}\!\left(
\sup_{i \ge M} \sup_{t \ge \tau_i} 
\e^{-\Lambda(t)} \big|\clusterpi{\sss\geq i}{t}\big| \ge \eta
\right) \leq \pr(\sigma_L <\infty). \]
Using \eqref{ctS} and Lemma \ref{s3bd}, $\limsup_{L\to\infty} \pr(\sigma_L < \infty) = 0$. In particular, 
\begin{equation}\label{Lambd}
  \limsup_{M\to\infty} \mathbb{P}\!\left(
\sup_{i \ge M} \sup_{t \ge \tau_i} 
\e^{-\Lambda(t)} \big|\clusterpi{\sss\geq i}{t}\big| \ge \eta
\right) =0.  
\end{equation}
Finally, 
\begin{align*}
\sup_{i \ge M} \sup_{t \ge \tau_i} 
e^{-\alpha t} \big|\clusterpi{\sss\geq i}{t}\big| &= \sup_{i \ge M} \sup_{t \ge \tau_i} 
e^{-\Lambda(t)} \big|\clusterpi{\sss\geq i}{t}\big| e^{\Lambda(t) - \alpha t}\\ 
&\le e^{\int_0^\infty|\alpha - w(u)|du}\sup_{i \ge M} \sup_{t \ge \tau_i} e^{-\Lambda(t)} \big|\clusterpi{\sss\geq i}{t}\big|.
\end{align*}
Thus, by \eqref{ctS} and \eqref{Lambd}, we conclude
\eqn{
\limsup_{M\to\infty} \mathbb{P}\!\left(
\sup_{i \ge M} \sup_{t \ge \tau_i} 
\e^{-\alpha t} \big|\clusterpi{\sss\geq i}{t}\big| \ge \eta
\right) =0.
}
This completes the proof of Theorem \ref{latecomplem}. 
\end{proof}

\subsection{Weak persistence and almost sure convergence of maximal component size}
\label{sec-weak-persistence}
We first prove the weak persistence result: Given any $\vep>0$, there exists $K(\vep)$ such that 
\begin{equation}\label{wp}
    \pr\left(\clusterpi{\max}{n} = \clusterpi{i}{n} \text{ for some } i> K(\vep) \text{ for infinitely many } n\right) < \vep.
\end{equation}
Fix any $\vep>0$. 
By Theorem \ref{latecomplem}, for any $\eta>0$, we can choose $K(\vep,\eta) \in \mathbb{N}$ such that
\begin{equation}\label{per1}
    \mathbb{P}\left(\max_{i>K(\vep,\eta)}\sup_{t \ge 0}  \e^{-\alpha(\pi)t}|\clusterpi{\sss \geq i}{t}| > \eta\right) < \vep/2.
\end{equation}
By Theorem \ref{thm-max-comp}(a), we know that, for any $i \in \mathbb{N}$,
\begin{equation*}
    n^{-\alpha(\pi)} |\clusterpi{i}{n}| \convas \zeta_i > 0.
\end{equation*}
In particular, choosing $\eta = \eta(\vep)>0$ small enough and moving to continuous time, we can obtain $J(\vep) \in \mathbb{N}$ such that
    \begin{equation}\label{per2}
    \pr\left(\inf_{t \ge \tau_{J(\vep)}} \e^{-\alpha(\pi)t}|\clusterpi{1}{t}| \le \eta\right) < \vep/2.
    \end{equation}
Thus, writing $K(\vep) = K(\vep, \eta(\vep))$ and interchanging between continuous- and discrete-time dynamics,
\begin{align*}
&\pr\left(\clusterpi{\max}{n} = \clusterpi{i}{n} \text{ for some } i> K(\vep) \text{ for infinitely many } n\right)\\
    &\quad \le \pr\left(\exists n > J(\vep) \text{ such that } \max_{i > K(\vep)}|\clusterpi{\sss \ge i}{n}| = |\clusterpi{\max}{n}|\right)\\
    &\quad  = \pr\left(\exists t > \tau_{J(\vep)} \text{ such that } \max_{i > K(\vep)}|\clusterpi{\sss \ge i}{t}| = |\clusterpi{\max}{t}|\right)\\
    &\quad \le \pr\left(\inf_{t \ge \tau_{J(\vep)}} \e^{-\alpha(\pi)t}|\clusterpi{1}{t}| \le \eta\right) + \mathbb{P}\left(\max_{i>K(\vep)}\sup_{t \ge 0}  \e^{-\alpha(\pi)t}|\clusterpi{\sss \geq i}{t}| > \eta\right) < \vep,
\end{align*}
proving \eqref{wp}.

Finally, we prove the almost sure convergence of the maximal component. Note that, for any $\vep>0$, on the event 
$$
\mathcal{E}:= \{\clusterpi{\max}{n} = \clusterpi{i}{n} \text{ for some } i\le K(\vep) \text{ for all sufficiently large } n\},
$$
we have
\begin{align*}
\lim_{n \to \infty}n^{-\alpha(\pi)}|\clusterpi{\max}{n}| &= \lim_{n \to \infty}\max_{i\le K(\vep)} n^{-\alpha(\pi)}|\clusterpi{i}{n}|\\
&= \max_{i\le K(\vep)}\lim_{n \to \infty} n^{-\alpha(\pi)}|\clusterpi{i}{n}| = \max_{i\le K(\vep)} \zeta_i \leq \max_{i\ge 1} \zeta_i.
\end{align*}
As $\vep>0$ is arbitrary and $\pr(\mathcal{E}) \ge 1-\vep$, we conclude
$$
\pr\left(\limsup_{n \to \infty}n^{-\alpha(\pi)}|\clusterpi{\max}{n}| \leq \max_{i\ge 1} \zeta_i\right)=1.
$$
Also, for every $i\geq 1$ fixed,
    \[
    n^{-\alpha(\pi)}|\clusterpi{\max}{n}|\geq n^{-\alpha(\pi)}|\clusterpi{i}{n}|
    \convas \zeta_i,
    \]
so that
    \[
    \liminf_{n\rightarrow \infty} n^{-\alpha(\pi)}|\clusterpi{\max}{n}|\geq \max_{i\geq 1}\zeta_i.
    \]
This completes the proof of Theorem \ref{thm-max-comp}(b).
\qed
\bigskip

\paragraph{\bf Acknowledgement.}
Banerjee was partially supported by the NSF CAREER award DMS-2141621. Bhamidi was partially supported by NSF DMS-2113662, DMS-2413928, and DMS-2434559. Banerjee and Bhamidi were partially funded by NSF RTG grant DMS-2134107. The work of van der Hofstad and Ray is supported in part by the Netherlands Organisation for Scientific Research (NWO) through the Gravitation {\sc Networks} grant 024.002.003. Part of
this material is based upon work supported by the National Science Foundation under Grant
No. DMS-1928930, while Banerjee, Bhamidi, van der Hofstad and Ray were in residence at the Simons Laufer
Mathematical Sciences Institute in Berkeley, California, during the Spring 2025 semester.  The work of Ray is supported in part by the European Union's Horizon 2020 research and innovation programme under the Marie Sk\l{}odowska-Curie grant agreement no.\ 945045.

\bibliographystyle{plain}
\bibliography{bibBooks}

\newpage


\appendix

\section{Local limits for uniform and preferential attachment graphs}
\label{sec-local-limit-UA}
In this section, we show that the uniform attachment model with a general number $m$ of out-edges per vertex locally converges, and identify the local limit. We follow \cite{HazHofRay23}, which is based on \cite{GarHazHofRay22}. In turn, \cite{GarHazHofRay22} is inspired by \cite{BerBorChaSab14}. An alternative approach can be found in \cite{Rior05}.
This appendix is organised as follows. In Section \ref{sec-local-limit-PA}, we describe the local limit of the preferential attachment model. In Section \ref{sec-local-limit-UA-rep}, we extend this to the uniform attachment model for general $m$. In Section \ref{sec-local-limit-UA-ext}, we discuss two related settings. The first is the local limit of {\em percolation} on dynamic random graphs. The second is the related description of the local limits of uniform and preferential attachment models in terms of a two-type killed branching random walk, which arises by taking a logarithmic time transformation.

\subsection{Local limit of preferential attachment models}
\label{sec-local-limit-PA}
Berger, Borgs, Chayes and Saberi proved in \cite{BerBorChaSab14} that the P\'olya point tree is the local limit of the preferential attachment model (PAM). Informally, the uniform attachment model can be obtained from the preferential attachment model by considering the model in \eqref{eqn:linear-att-def} with $a=1$, and taking the limit of $\delta\rightarrow \infty$. In this limit, the local limit simplifies considerably. The reason is that for uniform attachment, the end points of the edges of arriving vertices are chosen {\em independently}, while for preferential attachment, they are chosen according to their current degrees. Remarkably, as proved in \cite{BerBorChaSab14}, by a P\'olya-urn representation, this can be seen as choosing the vertices independently {\em conditionally} on some randomness given by independent Beta variables. Thus, many of the computations in \cites{BerBorChaSab14, GarHazHofRay22} are performed {\em conditionally} on these Beta variables, a step that is not necessary for the uniform attachment model.
\smallskip

We describe the local limit of the PAM, followed by an explanation of how this local limit can be derived. We follow \cite[Section 2.2]{HazHofRay23} almost verbatim. We start by defining the vertex set of this PPT:

\begin{defn}[Ulam-Harris set and its ordering]\label{def:ulam}
	{\rm Let $\N_0=\N\cup\{0\}$. {The {\em Ulam-Harris set}} is
		\begin{equation*}
			\mathcal{U}=\bigcup\limits_{n\in\N_0}\N^n.
		\end{equation*}
		For $x = x_1\cdots x_n\in\N^n$ and $k\in\N$, we denote {the element $x_1\cdots x_nk$ by $xk\in\N^{n+1}$.} The~{\em root} of the Ulam-Harris set is denoted by $\emp\in\N^0$.
		
		For any $x\in \mathcal{U}$, we say that $x$ has length $n$ if $x\in\N^n$. {The lexicographic \textit{ordering}} between the elements of the Ulam-Harris set {is as follows:}
		\begin{itemize}
			\item[(a)] for any two elements $x,y\in\mathcal{U}$, {$x>y$ when the length of $x$ is more than that of $y$;}
			\item[(b)] if $x,y\in \N^n$ for some $n$, then $x>y$ if there exists $i\leq n,$ such that $x_j=~y_j$ for all $j<i$ and $x_i>~y_i$.\hfill$\blacksquare$
	\end{itemize}}
\end{defn}

We use the elements of the Ulam-Harris set $\mathcal{U}$ to identify nodes in a rooted tree, since the notation in Definition~\ref{def:ulam} allows us to denote the relationships between children and parents, where for $x\in\mathcal{U}$, we denote the $k^{\rm th}$ child of $x$ by the element $xk$. 

\medskip

\paragraph{\bf P\'olya Point Tree ($\PPT$).}
The $\PPT(m,\delta)$ is an {\em infinite multi-type rooted random tree}, where $m$ and $\delta>-m$ are the parameters of preferential attachment models. It is a multi-type branching process, with a mixed continuous and discrete type space. We now describe its properties one by one.
\medskip

\paragraph{\bf Descriptions of the distributions and parameters used.}
\begin{enumerate}
	\item[{\btr}] {Define} $\chi = \frac{m+\delta}{2m+\delta}$.
	\smallskip
	\item[{\btr}] {Let} $\Gamma_{\sf{in}}(m)$ denote a Gamma distribution with parameters $m+\delta$ and $1$.
	\smallskip
	\item[{\btr}] {Let} $\Gamma_{\sf{in}}^\prime(m)$ denote the size-biased distribution of $\Gamma_{\sf{in}}(m)$, which is also a Gamma distribution with parameters $m+\delta+1$ and $1$.
\end{enumerate}
\medskip

\paragraph{\bf Features of the vertices of the $\PPT$.}
Below, to avoid confusion, we use `node' for a vertex in the PPT and `vertex' for a vertex in the PAM. We now discuss the properties of the nodes in $\PPT(m,\delta)$. Every node except the root in the $\PPT$ has
\begin{enumerate}
	\item[{\btr}] a {\em label} $z$ in the Ulam-Harris set $\mathcal{N}$ (recall Definition~\ref{def:ulam});
	\smallskip
	\item[{\btr}] a {\em birth time}\footnote{In \cite{GarHazHofRay22}, the birth time was called the {\em age}.} $A_{z}\in[0,1]$;
	\smallskip
	\item[{\btr}] a positive number $\Gamma_{z}$ called its {\em strength};
	\smallskip
	\item[{\btr}] a {label in $\{{\rO},{\rY}\}$} depending on the birth time of the {node} and its parent, {with $\rY$ denoting that the node is younger than its parent and $\rO$ denoting that the node is older than its parent.}
\end{enumerate}
Based on its label being $\rO$ or $\rY$, every {node} $\omega$ has a number $m_{-}(\omega)$ associated to it. If $\omega$ has type $\rO$, then $m_{-}(\omega)=m$, and $\Gamma_{\omega}$ is distributed as $\Gamma_{\sf{in}}^\prime( m )$,
while if $\omega$ has type $\rY$, then $m_{-}(\omega)=m-1$, and given $m_{-}(\omega), \Gamma_{\omega}$ is distributed as $\Gamma_{\sf{in}}( m )$.
\medskip

\begingroup
\allowdisplaybreaks
\paragraph{\bf Construction of the PPT} We next use the above definitions to construct the PPT using an {\em exploration process}.
The root is special in the tree. It has label $\emp$ and its birth time $A_\emp$ is an independent uniform random variable in $[0,1]$. The root $\emp$ has no label in $\{\rO,\rY\}$, but since we let $m_{-}(\emp)=m$, we can think of $\emp$ as having label $\rO$.
Then the {children of the root in the} P\'{o}lya point tree {are} constructed as follows:
\endgroup	
\begin{enumerate}
	\item Sample $U_1,\ldots,U_{m_{-}(\emp)}$ uniform random variables on $[0,1]$, independent of the rest;  
	\item To nodes $\emp1,\ldots, \emp m_{-}(\emp),$ assign the birth times $U_1^{1/\chi} A_\emp,\ldots,U_{m_{-}(\emp)}^{1/\chi} A_\emp$ and type $\rO$;
	\item Assign birth times $A_{\emp(m_{-}(\emp)+1)},\ldots, A_{\emp(m_{-}(\emp)+\din_\emp)}$ to nodes $\emp(m_{-}(\emp)+1),\ldots, \emp(m_{-}(\emp)+\din_\emp)$. These birth times are the occurrence times given by a conditionally independent Poisson point process on $[A_\emp,1]$ defined by the {\em random} intensity
    \eqn{\label{def:rho_emp}
		\rho_{\emp}(x) = {(1-\chi)}{\Gamma_\emp}\frac{x^{-\chi}}{A_\emp^{1-\chi}},}
	and $\din_\emp$ being the total number of points of this process. Assign type $\rY$ to them;
	\item Draw an edge between $\emp$ and each of $\emp 1,\ldots, \emp(m_{-}(\emp)+\din_\emp)$; 
	\item Label $\emp$ as explored and nodes $\emp1,\ldots, \emp(m_{-}(\emp)+\din_\emp)$ as unexplored.
\end{enumerate}	
Then, recursively over the elements in the set of unexplored nodes, we perform the following breadth-first exploration:
\begin{enumerate}
	\item Let $\omega$ denote the smallest currently unexplored node in the Ulam-Harris ordering;
	\item Sample $m_{-}(\omega)$ i.i.d.\ random variables $U_{\omega1},\ldots,U_{\omega m_{-}(\omega)}$ independently from all the previous steps and from each other, uniformly on $[0,1]$. To nodes $\omega 1,\ldots,\omega m_{-}(\omega)$ assign the birth times $U_{\omega1}^{1/\chi}A_\omega,\ldots,U_{\omega m_{-}(\omega)}^{1/\chi}A_\omega$ and label $\rO$, and set them unexplored;
	\item Let $A_{\omega(m_{-}(\omega)+1)},\ldots,A_{\omega(m_{-}(\omega)+\din_\omega)}$ be the random $\din_\omega$ points given by a conditionally independent Poisson process on $[A_{\omega},1]$ with random intensity
	\eqn{
		\label{for:pointgraph:poisson}
		\rho_{\omega}(x) = (1-\chi)\Gamma_\omega\frac{x^{-\chi}}{A_\omega^{1-\chi}}.
	}
	Assign these birth times to $\omega(m_{-}(\omega)+1),\ldots,\omega(m_{-}(\omega)+\din_\omega)$. {Assign them type $\rY$,} and set them unexplored;
	\item Draw an edge between $\omega$ and each one of the nodes $\omega 1,\ldots,\omega(m_{-}(\omega)+\din_\omega)$;
	\item Set $\omega$ as explored.
\end{enumerate}
We call the resulting tree the {\em P\'olya point tree with parameters $m$ and $\delta$,} and denote it by $\PPT(m,\delta)$.

\subsection{Local limit of uniform attachment models}
\label{sec-local-limit-UA-rep}
From the proof of the local limit result in \cite{GarHazHofRay22}, it is clear that the Gamma random variables in \eqref{def:rho_emp} and \eqref{for:pointgraph:poisson} arise from the Beta variables in the P\'olya description of the preferential attachment model. Indeed, in this description, vertex $k$ has a `strength' given by a Beta random variable $\psi_k$ with parameters $m+\delta$ and $(2k-3)m+\delta(j-1)$ (recall, e.g., \cite[Section 5.3.3]{Hofs24}). As $k\rightarrow \infty$,
    \eqn{
    \label{conv-Beta-Gamma}
    k\psi_k \convd \Gamma,
    }
where $\Gamma$ is a Gamma random variable with parameters $r=m+\delta$ and $\lambda=2m+\delta.$ Furthermore, the {\em size-biased} version of $\psi_k$, which also appears in the P\'olya urn representation of the PAM, also satisfies the convergence in \eqref{conv-Beta-Gamma}, but now with $r=m+\delta+1$, which is the same as the sized-biased version of the Gamma random variable with parameters $r=m+\delta$ and $\lambda=2m+\delta.$ This explains how the Gamma variables arising in the local limit for the preferential attachment model are directly related to the Beta variables in the P\'olya description of it. For uniform attachment, however, the edge attachments are {\em independent} rather than {\em conditionally independent given the Beta variables}, which significantly simplifies the analysis. Formally, the uniform attachment case can be obtained by taking the limit $\delta\rightarrow \infty.$
\smallskip

Indeed, we define the local limit of the uniform attachment graph as for the $\PPT$, but where 
\begin{enumerate}
	\item[{\btr}] $\chi$ is replaced by 1;
    \item[{\btr}] $(1-\chi)\Gamma_\emp$ and $(1-\chi)\Gamma_\omega$ are  replaced by $m$.
\end{enumerate}
Therefore, \eqref{def:rho_emp} and \eqref{for:pointgraph:poisson} are both replaced by
\eqn{
		\label{for:pointgraph:poisson-UA}
		\rho_{\omega}(x) =\frac{m}{x}.
	}
This can be understood by noting that a Gamma variable $\Gamma_r$ with parameters $r$ and $1$, as $r\rightarrow \infty$, satisfies
    \eqn{
    \Gamma_{r}/r\convas 1.
    }
Therefore, in \eqref{def:rho_emp} and \eqref{for:pointgraph:poisson}, we note that $1-\chi=m/(2m+\delta)$ and $r=m+\delta$ or $r=m+1+\delta$, to conclude that $(1-\chi)\Gamma_{\omega}\convas m$ as $\delta\rightarrow \infty$. Further, the number of attachments in the uniform attachment model of a vertex with birth time $\lceil un\rceil$ in the time interval $[\lceil an\rceil, \lceil bn\rceil]$, for any $u<a<b$, is given by
    \eqn{
    \label{num-attachemnts-UA}
    \sum_{i=m \lceil an\rceil}^{m\lceil bn\rceil} I_i,
    }
where $(I_i)_{i\geq \lceil un\rceil}$ are independent Bernoulli random variables with success probability $1/\lceil i/m\rceil$. By a Poisson approximation of a sum of independent Bernoulli variables, we can approximate the random variable in \eqref{num-attachemnts-UA} by a Poisson random variable with parameter
    \eqn{
    \int_a^b \frac{m}{x} dx.
    }
Further, the number of attachments in disjoint intervals are {\em independent}, which explains \eqref{for:pointgraph:poisson-UA}.
\smallskip

This shows how one can obtain the local limit for the uniform attachment model from that of the PAM:
\paragraph{\bf Proof of local limit for uniform attachment.} The proof that the above is the local limit of the uniform attachment model can be performed in exactly the same way as in \cite{GarHazHofRay22}.
\qed

The above also gives us the constructs and tools needed to prove Lemma \ref{lem:size-identity}:

\begin{proof}[Proof of Lemma \ref{lem:size-identity}]
  We note that the summands in the sum over $\ell$ correspond to the expected number of offspring in the $\ell^{\rm th}$ generation of the \(\pi\)-percolated local limit of the uniform attachment graph, which has offspring kernel $\kappa$, as we will prove below. Therefore, 
    \eqn{\label{for:s2:UB-5}
        \expec\big[ \susceptibilitypi{n}{2} \big] \leq 1+ \sum\limits_{\ell=1}^{\infty} \pi^\ell \expec \Big[ |\partial B_\ell(\varnothing)| \Big] = \expec \big[ |\clusterpi{\varnothing}{\infty}| \big]~.
    }
To identify the kernel $\kappa$, we follow the proof of \cite[Lemma 5.25]{Hofs24}. Let $s,t\in \{\rY,\rO\}$ and $x,y\in [0,1]$. Let $\kappa\big((x,s), ([0,y],t)\big)$ denote the expected number of children of type in $([0,y],t)$ of an individual of type $(x,s)$, and let
	\eqn{
	\label{kappa-UA-fin-def}
	\kappa\big((x,s), (y,t)\big)=\frac{{\mathrm d}}{{\mathrm d} y} \kappa\big((x,s), ([0,y],t)\big)
	}
denote the integral kernel of the offspring operator of the multi-type branching process. Recall that older children in the tree are located at iid standard uniform random variables times the birth time of the vertex. Thus, for $[a,b]\subseteq [0,x]$,
	\eqn{
	\label{kappa-UA-1}
	\kappa\big((x,\rO), ([a,b],\rO)\big)
	=m \int_{a/x}^{b/x} {\mathrm d}y=m(b-a).
	}
while
	\eqn{
	\label{kappa-UA-3}
	\kappa\big((x,\rY), ([a,b],\rO)\big)
	=(m-1)\int_{a/x}^{b/x}\frac{1}{y}{\mathrm d}y=(m-1)(\log(b)-\log(a)),
	}
Further, for $[a,b]\subseteq [x,1]$,
	\eqan{
	\label{kappa-UA-2}
	\kappa\big((x,\rO), ([a,b],\rY)\big)
	&=m\expec\Big[\Poi\big(\int_{a}^{b} \frac{1}{y}{\mathrm d}y\big)\Big]=m(\log(b)-\log(a)).
	}
while, for $[a,b]\subseteq [x,1]$,
	\eqan{
	\label{kappa-UA-4}
	\kappa\big((x,\rY), ([a,b],\rY)\big)
	&=\expec\Big[\Poi\Big(\int_{a}^{b} \frac{1}{y}{\mathrm d}y\Big)\Big]=m(\log(b)-\log(a)).
	}
We now use \eqref{kappa-UA-fin-def}, and see that all terms in $\kappa\big((x,s), (y,t)\big)$ are of the form of the rhs of \eqref{def-kappa-UA}, with $c_{st}=m$
except for $c_{\srY\srO}$, which equals $c_{\srY\srO}=m-1$. Together with \eqref{kappa-UA-fin-def}, this yields \eqref{def-kappa-UA}.
\end{proof}

\subsection{Equivalent formulation of local limits and proof of Proposition \ref{prop:expec-s2-convg}(a)}
\label{sec-local-limit-UA-ext}
In this section, we discuss two extensions of the previous results. In the first, we derive the killed two-type branching random walk description of the local limits of preferential and uniform attachment models. In the second, we discuss the local limit of {\em percolation} on such random graphs. These descriptions prove Proposition \ref{prop:expec-s2-convg}(a). 
\medskip

\paragraph{\bf Logarithmic transformation of local limit.}
We instead consider the {\em location} of nodes to be $\log(A_\omega)$, where $A_\omega$ is the birth time of the Ulam-Harris node $\omega$. This leads to the following changes. First, the root is located at $-E_{\omega}$. Then, the dynamics is as follows:
\begin{itemize}
\item[(2')] The location of the nodes of label $\rO$ is 
$-E_1/\chi,\ldots,-E_1/\chi$ away from the location of $\emp$;
\item[(3')] The $\din_\omega$ points with label $\rY$ have location compared to their parent that is given by a conditionally independent Poisson process on $(0,\infty)$ with random intensity
	\eqn{
    \label{for:pointgraph:poisson-logarithmic}
		\lambda_{\omega}(x) = (1-\chi)\Gamma_\omega \e^{(1-\chi) x},
	}
with nodes of location larger than 0 being killed. 
\end{itemize}
Item (2') follows, since the multiplicative nature of the birth times compared to the parents in item (2) of the PPT is changed into an {\em additive} nature, which can be seen as a displacement. Item (3') follows, since the number of younger children with location in $[a,b]$ of a parent with location equal to $c$, where $c<a<b$, is the number of children with birth time in $[\e^a, \e^b]$ of a node in the PPT with birth time $\e^c$, which is equal to 
    \eqn{
    \label{logarithmic-transformation-PA}
    \int_{\e^a}^{\e^b} (1-\chi)\Gamma_\omega\frac{x^{-\chi}}{\e^{(1-\chi)c}}\dif x
    =\Gamma_\omega (\e^{(1-\chi)(b-c)}-\e^{(1-\chi)(a-c)}),
    }
so that the {\em displacement} of the location equals an inhomogeneous Poisson point process with intensity $(1-\chi)\Gamma_\omega \e^{(1-\chi)x}$. This process needs to be killed at zero, since the original process was killed at 1. This describes the effect of the logarithmic transformation on the local limit of the preferential attachment model. We next extend this to the uniform attachment model, for which $\chi=1$, and the intensity is given by \eqref{for:pointgraph:poisson-UA}. Then \eqref{logarithmic-transformation-PA} is replaced by
    \eqn{
    \label{logarithmic-transformation-UA}
    \int_{\e^a}^{\e^b} \frac{m}{x}\dif x
    =m(b-a),
    }
so that the attachment process of label $\rY$ nodes is a homogeneous Poisson process.
\smallskip

We conclude that, after the logarithmic transformation, the local limits of the PAM and UA for general $m\geq 1$ are killed branching random walks, where nodes with non-negative location are killed, i.e., the barrier is at zero.
This proves Proposition \ref{prop:expec-s2-convg}(a) for $\pi=1$; see also Remark \ref{rem-barrier-zero}.
\medskip

\paragraph{\bf Local limit of percolation on uniform and preferential attachment models.}
We next extend the analysis to include {\em percolation} on the preferential and uniform attachment graphs. We start by describing the effect on the local limits. First, by the thinning properties of Poisson processes, when each node remains with probability $\pi$, the intensity of the Poisson process is simply multiplied by $\pi.$ For uniform attachment, by \eqref{logarithmic-transformation-UA}, this explains the intensity $2\pi$ in \eqref{intensity-Y-children-UA}, which, for general $m$, becomes $m\pi$. The number of $\rO$ children of a node $\omega$ in the tree becomes ${\rm Bin}(m_{-}(\omega),\pi),$ since each of these children is removed with probability $\pi$. The changes for preferential attachment are similar.
\smallskip

To prove that this is indeed the local limit of percolation on the preferential and uniform attachment model, we use the notion of {\em marked local convergence}; see \cite[Chapter 2]{Hofs24}. We associate independent Bernoulli$(\pi)$ random variables to all the edges in the graph. Then, the percolated subgraph is obtained by only considering edges with mark equal to 1. It is not hard to see that, when edges are equipped with {\em independent} random variables that take on finitely many values, local convergence of the original graph implies marked local convergence of the marked graph. This shows that the local limit of a bond percolated graph is percolation on the local limit. This completes the proof of local convergence of the percolated uniform and preferential attachment graphs. This proves Proposition \ref{prop:expec-s2-convg}(a) for general $\pi\in[0,1].$
\qed

\end{document}